\documentclass[10pt]{article}
\usepackage[utf8]{inputenc}
\usepackage[style=numeric, maxbibnames=99, doi=false,isbn=false,url=false,eprint=false]{biblatex}
\usepackage{graphicx}
\usepackage{todonotes} 
\usepackage{bm}
\usepackage{amsmath}
\usepackage{mathtools}
\usepackage{amsfonts}
\usepackage{amssymb}
\usepackage{amsthm}

\usepackage{cleveref}
\usepackage{enumitem}
\usepackage{caption}
\usepackage{subcaption}
\graphicspath{ {images/} }

\theoremstyle{remark}

\theoremstyle{plain}
\newtheorem{theorem}{Theorem}[section]
\newtheorem*{theorem*}{Theorem}
\newtheorem{corollary}[theorem]{Corollary}
\newtheorem{lemma}[theorem]{Lemma}
\theoremstyle{definition}
\newtheorem{remark}[theorem]{Remark}
\newtheorem*{remark*}{Remark}
\newtheorem{definition}[theorem]{Definition}
\newtheorem{assumption}[theorem]{Assumption}

\newcommand{\NN}{\mathbb{N}}

\newcommand{\FF}{\textsc{Fr}}
\newcommand{\TT}{\textsc{Fo}}

\newcommand{\ZZpos}{{\NN}}
\newcommand{\Ln}{\lim_{n\to \infty}}

\newcommand{\bd}{{\bm{d}}}
\newcommand{\Conf}{\mathbb{CM}}
\newcommand{\Gcal}{\mathbb{G}}

\newcommand{\Hcal}{\mathcal{H}}

\newcommand{\Ucal}{\mathcal{U}}

\renewcommand{\Pr}{\mathbb{P}}

\newcommand{\E}[1]{\mathbb{E}\left[ #1
\right] }
\newcommand{\Var}[1]{\mathrm{Var}\left( 
#1 \right)}

\DeclareMathOperator{\fog}{\textsc{FO}}
\DeclareMathOperator{\fo}{\textsc{FO}}
\DeclareMathOperator{\Core}{\mathrm{Core}}

\DeclareMathOperator{\ex}{\mathrm{ex}}
\DeclareMathOperator{\aut}{\mathrm{aut}}
\DeclareMathOperator{\authe}{{\mathrm{aut}_{\mathrm{h.e.}}}}

\newcommand{\mmid}{\; \middle| \;}

\newcommand{\mmnt}{\rho}

\newcommand{\Frag}{\mathrm{Frag}}

\newcommand{\acyc}{\mathrm{acyc}}

\newcommand {\defin}[1] {{\it{#1}}}
\newcommand{\comment}[1]{}

\bibliography{main}

\title{First Order Logic of Sparse Graphs with Given Degree Sequences}
\author{Alberto Larrauri, Guillem Perarnau}
\date{May 2023}

\begin{document}

\maketitle

\begin{abstract}
    We consider limit probabilities of first order properties in random graphs with a given degree sequence. Under mild conditions on the degree sequence, we show that the closure set of limit probabilities is a finite union of closed intervals. Moreover, we characterize the degree sequences for which this closure set is the interval $[0,1]$, a property that is intimately related with the probability that the random graph is acyclic. As a side result, we compile a full description of the cycle distribution of random graphs and study their fragment (disjoint union of unicyclic components) in the subcritical regime. Finally, we amend the proof of the existence of limit probabilities for first order properties in random graphs with a given degree sequence; this result was already claimed by Lynch~[IEEE LICS 2003] but his proof contained some inaccuracies.
\end{abstract}

\section{Introduction}
\label{sec:intro}

Since the seminal work of  Erd\H os and R\'enyi~\cite{ER60}, random graphs have been central objects of
study in probabilistic combinatorics. In this area it is common to ask, given a graph property $\varphi$ and
a sequence of random graphs $\Gcal_n$ increasing in order, what is the limit probability that $\Gcal_n$ 
satisfies $\varphi$. Of course, this limit may or may not exist, and the question about its existence is interesting on its own. In a related direction, for families $\mathcal{L}$ of ``well-behaved'' graph properties, one may want to obtain a procedure (i.e., an algorithm) that given some property $\varphi$ in $\mathcal{L}$
computes the \textit{limit probability} $p(\varphi):=\Ln \Pr(\Gcal_n \text{ satisfies } \varphi)$, if it exists. \par
The \defin{model-theoretical} approach to the questions above is to classify graph properties according to the formal languages that can express them, and obtain convergence results for the entire language rather than for individual properties. One such language is the first order ({\sc FO}) language of graphs, which consists of first order logic, where variables represent vertices, plus a binary adjacency relation, which is meant to be symmetric and anti-reflexive. Fagin~\cite{faginProbabilitiesFiniteModels1976} and, independently, Glebski et. al.~\cite{glebskiiRangeDegreeRealizability1969} showed that $p(\varphi)\in \{0,1\}$ for all {\sc FO}-properties $\varphi$ in the case that $\Gcal_n=\Gcal_n(1/2)$, the \emph{binomial random graph} on $n$ vertices, obtained by including each edge independently with probability $1/2$.
Even more, there is a procedure to determine the limit probability for any given {\sc FO}-property. This result, which gave rise to the model-theoretical study of random graphs, is an example of a \defin{zero-one law}. A more general kind of result is a \defin{convergence law}, which simply states that for a given sequence of random graphs $\Gcal_n$ and a given language $\mathcal{L}$, the limit probability $p(\varphi)$ exists for all properties $\varphi$ expressible in $\mathcal{L}$.
This work continues a line of research~\cite{hmnt2018,larrauriLimitingProbabilitiesFirst2022} that studies the geometry of the set of limit probabilities in settings where a convergence law holds. \par

In this paper we deal with the case where $\Gcal_n$ is a random graph whose degree sequence
has been fixed a priori. A \defin{degree sequence} of length $n$ is a sequence $\bd_n=
(d_i)_{i\in [n]}$ where $d_i$ is a non-negative integer with $d_i< n$ for all $i\in [n]$ and 
$\sum_{i\in [n]} d_i$ is even.
We call $\bd_n$ \defin{feasible}, if there is at least one graph $G$ with $V(G)=[n]$
whose degree sequence is $\bd_n$, meaning that for all $v\in [n]$, it has degree $d_v$.
Given a feasible degree sequence $\bd_n$ on $n$ vertices, $\Gcal_n(\bd_n)$ denotes the uniform random graph with vertex set $[n]$ and whose degree sequence is $\bd_n$.  
A \defin{sequence of degree sequences} is $\bd=(\bd_n)_{n\in \NN}$, where
$\bd_n=(d_{n,i})_{i\in [n]}$ is a degree sequence on $n$ vertices. By convention, we set $d_{n,i}=0$ for each $i\geq n$, and, if the context is clear, we use $d_i=d_{n,i}$. 
Given $\bd$, we define the \defin{sequence of random graphs} $\Gcal(\bd) = (\Gcal_n(\bd_n))_{n\in \NN}$. 
\par

In order to study $\fo$ logic on $\Gcal(\bd)$, we will need to impose some regularity conditions on $\bd$.
These conditions are better stated in terms of degree distributions. For $k\geq 0$, define $n_k=n_{n,k}= |\{i\in [n] \mid  d_{n,i}=k\}|$. Given $n\in \NN$ and $\bd$, the \defin{degree distribution} $D_n=D_n(\bd)$ is given by $\Pr(D_n = k) = n_{n,k}/n$. Equivalently, $D_n$
is the probability distribution of the degree of a uniform random vertex in $\Gcal_n(\bd_n)$. The following is our main assumption, used throughout this paper.

\begin{assumption}
\label{assump:main}
There exists a probability distribution $D=D(\bd)$ on $\NN_0$  such that

\begin{enumerate}[label=(\Roman*)]
    \item[\defin{(i)}]\label{item:assump_1}  $\bd_n$ is feasible for all $n\in \NN$;
    \item[\defin{(ii)}]\label{item:assump_2} 
    $ D_n \rightarrow D$, in distribution;
    \item[\defin{(iii)}]\label{item:assump_3}  $\lim_{n\to \infty} 
    \E{D_n}$ (resp.,
    $\lim_{n\to \infty}
    \E{D_n^2}$),
    exists, is bounded and 
    equals to $\E{D}$ (resp., $\E{D^2}$);
    \item[\defin{(iv)}]\label{item:assump_4}  if $\Pr(D=k)=0$ for some $k\in \NN_0$, then $n_{n,k}=0$ for all $n\in \NN$.
\end{enumerate}
\end{assumption}

In this context a convergence law for $\fo$-properties holds.

\begin{theorem}
\label{thm:convergence_law}
Suppose that $\bd$ satisfies \Cref{assump:main}. Then for any
property $\varphi$ in the $\fo$ language of graphs, the following limit exists
\begin{equation}
\label{eq:limits}
  p(\varphi,\bd):= \Ln \Pr(\Gcal_n(\bd_n) \text{ satisfies } \varphi).
\end{equation}
\end{theorem}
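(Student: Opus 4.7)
The plan is to reduce \Cref{thm:convergence_law} to convergence in distribution of bounded-radius neighborhood counts via Gaifman locality for $\fo$. By Gaifman's theorem, every $\fo$-sentence $\varphi$ is equivalent to a Boolean combination of basic local sentences of the form ``there exist $k$ vertices at pairwise distance greater than $2s$ whose $s$-neighborhoods all satisfy a given $\fo$-formula.'' Hence it suffices to show that, for each fixed $s$, the joint distribution of the counts $N_H(\Gcal_n(\bd_n))$ of vertices whose $s$-neighborhood is isomorphic to $H$, as $H$ ranges over the finitely many rooted isomorphism types of radius at most $s$, converges as $n\to\infty$, and that typical pairs of vertices have disjoint $s$-neighborhoods (automatic in a sparse graph).

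First I would pass to the configuration model $\Conf(\bd_n)$: under \Cref{assump:main}(iii) the probability that $\Conf(\bd_n)$ is simple tends to a positive constant, and conditioning on simplicity recovers $\Gcal_n(\bd_n)$, so it is enough to prove joint convergence of the $N_H$'s together with the simplicity indicator in $\Conf(\bd_n)$. The Benjamini--Schramm local limit of $\Conf(\bd_n)$ under \Cref{assump:main} is the unimodular Galton--Watson tree $T$ with root law $D$ and further offspring distributed as $\HD$, where $\Pr(\HD=k)=(k+1)\Pr(D=k+1)/\E{D}$. For a rooted tree $H$ of radius at most $s$ this gives $\frac{1}{n}\E{N_H(\Conf(\bd_n))}\to \Pr(B_s(T,\troot)\cong H)$, and a two-vertex exploration argument provides the second-moment bound needed for $N_H/n$ to concentrate around this deterministic limit.

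To handle rooted graphs $H$ containing a cycle I would invoke the classical Poisson limit for short cycles in the configuration model: jointly for each $\ell\le 2s$, the number of $\ell$-cycles converges to an independent $\Po(\lambda_\ell)$ with $\lambda_\ell=\frac{1}{2\ell}(\E{D(D-1)}/\E{D})^\ell$, and asymptotically almost surely no two such short cycles lie within distance $2s$ of one another. Grafting an isolated short cycle into an independent Galton--Watson neighborhood then shows that for unicyclic $H$ of radius $s$, $N_H$ converges to a Poisson random variable whose parameter is an explicit functional of $D$, jointly and asymptotically independently of the tree-type proportions. Substituting these limits into Gaifman's reduction yields the existence of $p(\varphi,\bd)$.

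The main obstacle, which is also where Lynch's original argument was inaccurate, is the simultaneous joint convergence of (a) the tree-type vertex proportions, (b) the short-cycle counts, and (c) the loop and multi-edge counts used to condition on simplicity, in a single probability space. I would carry this out in $\Conf(\bd_n)$ by the method of factorial moments: joint factorial moments of cycle counts and neighborhood-type counts can be computed by enumerating ordered matchings of half-edges and, under \Cref{assump:main}, converge to the factorial moments of the proposed Poisson--deterministic product limit. Because all marginals of that limit are either Poisson or degenerate, convergence of all factorial moments yields joint convergence in distribution, and the bounded positive limit probability of simplicity then transfers the convergence from $\Conf(\bd_n)$ to $\Gcal_n(\bd_n)$.
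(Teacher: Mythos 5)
Your proposal takes a genuinely different logical route from the paper. The paper (following Lynch) conditions on the $r$-core of $\Conf_n(\bd_n)$ and uses Ehrenfeucht--Fra\"iss\'e/pebble games together with the fact that every admissible tree type appears many times as an $r$-neighbourhood; you instead invoke Gaifman's theorem to reduce $\varphi$ to a Boolean combination of basic local sentences and track joint convergence of neighbourhood-type counts. The underlying probabilistic inputs are essentially identical in both routes: local convergence of $\Conf_n(\bd_n)$ to a unimodular Galton--Watson tree, joint Poisson convergence of short-cycle counts (including loops and multi-edges, which gives the simplicity conditioning), and separation of short cycles. Gaifman's theorem is arguably a cleaner way to package the locality argument, while the paper's core-plus-EF-games approach makes the conditional zero-one law (their Fact (I) together with pebble games) more explicit.

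However, there are two concrete gaps in your sketch. First, you assert that there are ``finitely many rooted isomorphism types of radius at most $s$''; this is false under \Cref{assump:main} because the degree distribution $D$ can have unbounded support, so the $s$-neighbourhood of a vertex can be arbitrarily large and there are infinitely many isomorphism types. The correct object to track is the finite set of equivalence classes under first-order formulas of the relevant bounded quantifier rank (equivalently, EF-game types), not full isomorphism types. Once you make that replacement, the ``joint distribution of the $N_H$'' is over finitely many variables again, but you are then essentially back to the paper's game-theoretic bookkeeping. Second, your proposal nowhere uses Assumption~(iv), yet it is indispensable: without it, a tree type $H$ with $\Pr(B_s(T,\troot)\cong H)=0$ can still be realized by $N_H(\Gcal_n(\bd_n))>0$ vertices for infinitely many $n$ (the paper's example is a single vertex of degree $3$ present only for odd $n$), making $\Pr(\exists x\colon B_s(x)\cong H)$ oscillate and destroying the limit. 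Your second-moment/local-limit machinery gives $N_H/n\to 0$ but says nothing about whether $N_H$ eventually vanishes, so this case must be ruled out explicitly via~(iv), exactly as the paper notes in its discussion of Fact~(I).
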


The proof of this theorem is not the primary goal of this paper and is sketched in \Cref{sec:convergence_law}. Very similar results were established by Lynch in two closely related articles~\cite{lynchConvergenceLawRandom2005,lynchConvergenceLawRandom2003}. The difference between Lynch's articles is the assumptions imposed on $\bd$. These assumptions are non-comparable with \Cref{assump:main}. However, due to a slight oversight, both Lynch's proofs are incorrect. Even more, the convergence law does not always exist under any of his two assumptions. Nevertheless, under~\Cref{assump:main} his proof strategy can be used correctly and this assumption is essentially the weakest condition required to have a convergence law. 
\par

Given that the limit in \labelcref{eq:limits} exists for all $\fog$-properties $\varphi$, our object of interest is the set of limits. Define
\begin{equation}
\label{eq:L_bd}
L(\bd) := \{
p(\varphi,\bd)  \, \mid \, \varphi \text{ $\fog$-property}
\}.
\end{equation}

 We are interested in the geometry of the \defin{topological closure} $\overline{L(\bd)}$ (i.e. the union of the points in the set and its limit points). 
 Observe that $\overline{L(\bd)}$ is a \defin{symmetric} subset of the interval $[0,1]$ (i.e, $p\in \overline{L(\bd)}$ if
and only if $1-p \in \overline{L(\bd)}$), since the negation of an $\fog$-property is also an $\fog$-property.

 The main result of the paper is the following.

\begin{theorem}
\label{thm:main}
Suppose that $\bd$ satisfies \Cref{assump:main}. Define
\begin{equation}
\label{eq:p_acyc}
p_{\acyc}(\bd):=\Ln \Pr(\Gcal_n(\bd_n) \text{ is acyclic} ). 
\end{equation}
Then, $\overline{L(\bd)}$ is a finite union of closed intervals, and
\begin{enumerate}
	\item[(1)] if $p_{\acyc}(\bd) < 1/2$, then $\overline{L(\bd)}\neq [0,1]$;
	\item[(2)] if $p_{\acyc}(\bd) \geq 1/2$, then 
 $\overline{L(\bd)} = [0,1]$.
\end{enumerate}
\end{theorem}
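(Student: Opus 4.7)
Proof plan. The plan is to combine a structural decomposition of $\Gcal_n(\bd_n)$ with an Ehrenfeucht--Fra\"iss\'e/Hanf-locality analysis to represent the limit probability $p(\varphi,\bd)$ of any $\fog$-sentence $\varphi$ of quantifier rank $q$ as a finite sum of Poisson cylinder probabilities weighted by a deterministic complex-part factor. First I would decompose $\Gcal_n(\bd_n)$ into its complex part $\Cmp_n$ (components containing at least two cycles) and its fragment $\FragC_n$ (disjoint union of trees and unicyclic components). By Hanf-locality the $q$-type of $\Gcal_n(\bd_n)$ is determined by (a) the $q$-type of $\Cmp_n$ and (b) the multiplicities $(N_U)_U$ of a finite family of rooted unicyclic/tree gadgets $U$ in $\FragC_n$. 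Under \Cref{assump:main}, the $q$-type of $\Cmp_n$ converges to a deterministic limit (a 0--1 law on the complex part) and $(N_U)_U$ converges jointly to independent Poissons $Y_U\sim\Po(\lambda_U)$ with parameters explicit in $D$, yielding
\[
  p(\varphi,\bd)\;=\;\sum_{\tau\in\Sigma_\varphi}\Pr\bigl((Y_U)_U\in S_\tau\bigr),
\]
where $\Sigma_\varphi$ is a finite set of types at rank $q$ and $S_\tau$ are product cylinders.

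From this representation, $L_q(\bd)$ is finite for each fixed $q$; accumulation points of $L(\bd)=\bigcup_q L_q(\bd)$ arise only from Poisson gadgets with arbitrarily small parameters. A Lebesgue-style density argument on the product-of-Poissons measure (in the spirit of~\cite{larrauriLimitingProbabilitiesFirst2022}) then shows that $\overline{L(\bd)}$ decomposes into a finite union of closed intervals, each indexed by a macroscopic boolean pattern on the complex part and on the finitely many large-parameter Poisson coordinates.

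For Part~(2), note $p_{\acyc}(\bd)=\prod_{k\geq 3}e^{-\lambda_k}$, and the FO-sentences $\alpha_K:=\neg\exists$"a cycle of length $\leq K$" satisfy $p(\alpha_K,\bd)\downarrow p_{\acyc}$, placing $p_{\acyc},1-p_{\acyc}\in\overline{L(\bd)}$. Conjoining $\alpha_K$ with FO-sentences that fix counts of rooted tree gadgets (Poissons independent of cycle counts in the limit, with arbitrarily small tunable parameters) densely fills the interval $[0,p_{\acyc}]$; by the negation-symmetry of $L(\bd)$, the interval $[1-p_{\acyc},1]$ is also densely filled. When $p_{\acyc}\geq 1/2$ these two closed intervals cover $[0,1]$, proving Part~(2).

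For Part~(1), the plan is to exhibit a nonempty gap in $\overline{L(\bd)}$. The argument uses the finite-union-of-intervals structure: each such interval has length bounded by the supremum of a Poisson cylinder mass, and its placement is constrained by the complex-part 0--1 law; when $p_{\acyc}<1/2$ a quantitative mass accounting across the macroscopic boolean patterns shows that the finitely many intervals cannot cover $[0,1]$ in their totality. The main obstacle is precisely this quantitative gap argument: identifying an open sub-interval disjoint from every interval in the decomposition of $\overline{L(\bd)}$, and in particular showing that the threshold is exactly $p_{\acyc}=1/2$ and not some other value. This will require a careful study of the attainable sums of Poisson cylinder probabilities, using the structure of the parameters $\lambda_U$ imposed by \Cref{assump:main} and the 0--1 law on $\Cmp_n$.
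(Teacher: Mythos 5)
Your plan gets the structural decomposition right at a high level (separate the complex part from the simple unicyclic/tree part, invoke Poisson limits for cycles, aim for a finite-union-of-intervals conclusion via a density argument), but there are two genuine problems.

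\paragraph{Tree counts are not Poisson.} You assert that the multiplicities $(N_U)_U$ of ``rooted unicyclic/tree gadgets'' converge jointly to independent Poissons with tunable small parameters, and Part~(2) leans on this to densely fill $[0,p_{\acyc}]$. This is false for tree gadgets: under \Cref{assump:main}, the expected number of copies of any fixed tree (as a neighbourhood type or as an isolated component with admissible degrees) grows without bound, so by Hanf-locality these counts saturate the counting threshold a.a.s.\ and contribute nothing to the $q$-type. The \emph{only} small-neighbourhood statistics that remain random in the limit are the counts of unicyclic pieces, i.e.\ of cycles, and their Poisson rates $\nu^k/2k$ are dictated by $\nu$ alone — they are not freely tunable. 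Consequently, conjoining $\alpha_K$ with sentences about ``tree gadgets'' does not generate a dense subset of $[0,p_{\acyc}]$: once you forbid cycles of length at most $K$, the residual randomness available to you has total expected mass $\sum_{\ell>K}\nu^\ell/2\ell\to 0$, so the attainable probabilities collapse towards the endpoints. The claim that $[0,p_{\acyc}]\cup[1-p_{\acyc},1]$ is densely covered is not established.

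\paragraph{The missing tool is Kakeya's criterion on partial sums of fragment probabilities.} The paper's route is to prove a conditional zero--one law: for $\nu<1$, the truth of any $\fog$-sentence is a.a.s.\ determined by the isomorphism type of the fragment $\Frag_n$ (the union of unicyclic components). This yields the clean identification
$\overline{L(\bd)}=\bigl\{\sum_{H\in\Ucal}p(H):\Ucal\subseteq\FF\bigr\}$,
the set of all partial sums of limiting fragment probabilities. Both the finite-union-of-intervals statement and the sharp threshold then follow from Kakeya's criterion (\Cref{ch:binom_lims.kakeya}) applied to the decreasing rearrangement $(p_i)$ of these probabilities. Part~(1) becomes a one-line observation: the largest term is $p_1=p_{\acyc}(\bd)$ (the empty fragment), so if $p_{\acyc}>1/2$ then $p_1>1-p_1=\sum_{j>1}p_j$ and every partial sum lies in $[0,1-p_1]\cup[p_1,1]$ — an explicit gap. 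Part~(2) is the converse Kakeya check, which for $i>1$ requires a non-trivial bound comparing $p_i$ to the tail mass contributed by fragments whose cycles are longer; this is where the bound $\nu\geq\nu_0\geq 3/4$ enters. Your ``quantitative mass accounting'' for Part~(1) and your density heuristic for Part~(2) would, if pushed through, essentially have to reconstruct this criterion; as written, neither direction is proved, and you correctly flag Part~(1) as the main obstacle. You should replace the Hanf-type cylinder representation and the tree-gadget tuning by the fragment zero--one law, the partial-sums identity, and Kakeya.

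Finally, note that the supercritical case $\nu\geq 1$ cannot be handled by this fragment analysis (the fragment distribution is not tight there); the paper defers that case to the argument of~\cite{larrauriLimitingProbabilitiesFirst2022}, and your proposal does not address it.
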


We devote the rest of the introduction to discuss some aspects of our results.
\begin{remark}[Discussion on \Cref{assump:main}]\label{rem:1}
 Assumption (i) is necessary in order to define $\Gcal(\bd)$. Assumptions (ii) and (iii) allow us to study $\Gcal_n(\bd_n)$ by looking at the limit degree distribution $D$. They imply that the average degree and the second moment of the degree sequence are bounded in probability, which in particular imply that the maximum degree is $o(\sqrt{n})$. These two assumptions are usual in the setting of random graphs with given degree sequence. The infinite degree variance case exhibits a very different behaviour (see e.g. \cite{vanderhofstadRandomGraphsComplexVolII}) and it remains as an open problem to prove if in such case $\Gcal(\bd)$ has a convergence law for the $\fog$ language. Finally, Assumption (iv) rules out the existence of vertices with low-frequency degrees. Otherwise such vertices would pose an obstacle to a convergence law for $\fo$ logic on graphs: for instance, consider $\bd_n$ containing a single vertex of degree $3$ for odd $n$, and none for even $n$, and $\varphi$ the {\sc FO}-property ``the graph contains a vertex of degree $3$'', then $p(\varphi,\bd)$ does not exist. For our purposes, assumption (iv) could be weakened replacing ``for all $n$'' by ``for all sufficiently large $n$''. However, we use the stronger version for convenience. \par
 \end{remark}

 \begin{remark}[The configuration model]\label{rem:2} As it is usually the case, instead of studying directly the graph $\Gcal_n(\bd_n)$ we will study $\Conf_n(\bd_n)$, a related random (multi)graph known as the \defin{configuration model}, and introduced in \Cref{sec:prelims}. Theorems~\ref{thm:convergence_law} and~\ref{thm:main} also hold for $\Conf(\bd)$, as it will be discussed later.
 \end{remark}

 \begin{remark}[Probability of being acyclic]\label{rem:3}
The cycle distribution and the limit probability that $\Gcal_n(\bd_n)$ (or $\Conf_n(\bd_n)$) is acyclic have been studied in the literature (see \Cref{sec:previous}). It is well-known that, provided that $\bd$ satisfies \Cref{assump:main}, the number of cycles of length $k$ converges to a Poisson variable with parameter $\nu^k/(2k)$, and they are asymptotically independent (see \Cref{thm:cycle_distribution}). Here, $\nu$ is a limit parameter defined on $\bd$, see~\eqref{eq:def_nu}, which in fact is a fundamental parameter of random graphs with a given degree sequence. For instance, the phase transition for the existence of a giant component is located at $\nu=1$~\cite{janson2009new,molloyCriticalPointRandom1995}. As a consequence, $p_{\acyc}(\bd)$ exists and only depends on $\nu$:
\begin{equation}
    \label{eq:p_acyc_expression}
    p_{\acyc}(\bd) = \sqrt{1-\nu}\cdot e^{\frac{\nu}{2} + \frac{\nu^2}{4}} \qquad \text{for }\nu\in (0,1),
\end{equation}
and $p_{\acyc}(\bd)=0$ if $\nu \geq 1$ (see \Cref{cor:acyclic}). In \eqref{eq:p_acyc_expression}, the term $\sqrt{1-\nu}$ accounts for the probability that $\Conf_n(\bd_n)$ is acyclic: it has no cycles of length $k$, for all $k\geq 1$. However, our object of interest is $\Gcal_n(\bd_n)$ which, by definition, is simple: it has neither cycles of length one (loops) nor cycles of length two (multiedges). The correction terms $e^{{\nu}/{2}}$ and $e^{{\nu^2}/{4}}$ can be thought as deducting the contribution of loops and multiedges, respectively, from  the configuration model acylic probability.

Alternatively, taking the Taylor series of $\ln(\sqrt{1-\nu})$, we can rewrite~\eqref{eq:p_acyc_expression} as
\begin{equation}\label{eq:alter_acyclic}
p_{\acyc}(\bd) = \exp\left(-\sum_{k\geq 3} \frac{\nu^k}{2k} \right) \qquad \text{for }\nu\in (0,1).
\end{equation}
This is not a closed-form expression but it has a more straightforward interpretation: $\Gcal_n(\bd_n)$ is acyclic if and only if it has no cycles of length three (term $e^{-\nu^3/6}$), no cycles of length four (term $e^{-\nu^4/8}$), and so on. In particular, note that $p_{\acyc}(\bd)\in (0,1)$ for $\nu\in (0,1)$.
 
The second part of \Cref{thm:main} locates a threshold at $p_{\acyc}(\bd)=1/2$. Let $\nu_0\in [0,1]$ be the unique root of
\begin{equation}\label{eq:nu0}
    \sqrt{1-\nu}\cdot e^{\frac{\nu}{2} + \frac{\nu^2}{4}}=\frac{1}{2}.
\end{equation}
Indeed, the previous equation has exactly one solution in $[0,1]$ as its {\sc LHS} is monotonically decreasing in this interval and evaluates to $1$ at $\nu=0$ and to $0$ at $\nu=1$. Numerically, we obtain $\nu_0\approx 0.9368317$, which coincides with the threshold value for $c$ in $\Gcal_{n}(c/n)$ obtained in~\cite{larrauriLimitingProbabilitiesFirst2022}. To our best knowledge, the value $\nu_0$ has not been identified yet as a threshold for any other property of $\Gcal_n(\bd_n)$.
\end{remark}

\par

\section{Preliminaries}\label{sec:prelims}

\subsection{Notation}

From now on we fix $\bd$ that satisfies \Cref{assump:main}. 
We introduce some additional notation that will be used throughout the paper. 
For $k\geq 0$ and $n\in \NN$, define 
$\lambda_{n,k}:= \Pr(D_n=k)$ and $\lambda_k := \Pr(D=k)$, where 
$D_n$ is the degree distribution corresponding to $\bd_n$ and
$D$ is the limiting degree distribution (which exists by \Cref{assump:main}).
Additionally, define $\widehat{\lambda}_n:=1 - \lambda_{n,0} - \lambda_{n,1}$,
and $\widehat{\lambda}:=\Ln \widehat{\lambda}_n$. 
Similarly, define $\widehat{n}:= |\left\{ i\in [n] \mid d_{n,i}\geq 2 \right\}|=n \widehat{\lambda}_n$, the number of vertices whose degree in $\bd_n$ is at least $2$. 

Define $m_n:= \sum_{i\in [n]}d_{n,i}$,
twice the number of edges in $\Gcal_n(\bd_n)$.
Define $\mmnt_{n,k}:=\E{D_n(D_n-1)\dots (D_n-k+1)}$ and
$\mmnt_k:=\E{D(D-1)\dots (D-k+1)}$, the $k$-th factorial moments of $D_n$ and $D$, respectively. By \Cref{assump:main}.(iii), $\Ln\mmnt_{n,k}= \mmnt_k$ for $k=1,2$.
Define 
\begin{equation}
\label{eq:def_nu}
    \nu:= \nu(\bd)=\frac{\E{D(D-1)}}{\E{D}}= \frac{\rho_2}{\rho_1}.
\end{equation}
By the convergence of the first and second moments, if $\nu_n :=\tfrac{\E{D_n(D_n-1)}}{\E{D_n}}$, then $\lim_{n\to\infty}\nu_n= \nu$. \par

As we already indicated in \Cref{rem:3}, $\nu$ plays a crucial role in the cycle distribution and the shape of the limit probability set.

\subsection{Multigraphs}

A \textit{multigraph} $G$ is a pair $(V(G), E(G))$ where $V(G)$ is its \textit{vertex set}, and $E(G)$ is its \textit{edge set}, a multiset of unordered pairs $\{u,v\}$ where $u,v\in V(G)$. We allow the possibility that $u=v$, in which case the edge is called a \textit{loop}. Given $u,v\in V(G)$, the \textit{multiplicity} of the edge with endpoints $u,v$ is the number of pairs $\{u, v\}$ in the multiset $E(G)$. The \textit{degree} $\deg(v)$ of a vertex $v\in V(G)$ is the number of edges $\{u,v\}\in E(G)$ with $u\neq v$, plus twice the number of loops $\{v,v\}\in E(G)$.

If $G,H$ are multigraphs, an $H$-copy in $G$ is a \emph{sub-multigraph} $H^\prime \subseteq G$ that is isomorphic to $G$.
In the context of multigraphs, $k$-\defin{cycles} are defined as usual for $k\geq 3$. For $k=2$, a $2$-cycle consists of two vertices plus two edges joining them, and for $k=1$, a $1$-cycle is just a vertex with a loop attached to it.
\par
Define the \defin{excess} 
of a multigraph (or graph) $G$ to be $\ex(G):=|E(G)|-|V(G)|$. A connected graph $G$ is \defin{unicylic} if $\ex(G)=0$, or, equivalently, when it has exactly one cycle. A \defin{fragment} is a disjoint union of unicylic components.
The \defin{fragment} $\Frag(G)$ of a multigraph (or graph) $G$ is the union of its unicyclic components.
We call a fragment \defin{simple} if it contains neither loops nor multiedges, or, in other words, if it contains no cycles of length smaller than $3$. 
\par

Given a multigraph $G$, its number of \defin{half-edge automorphisms} is 
\begin{equation}
\label{eq:def_authe}
    \authe(G):=\aut(G) 2^\ell \prod_{u,v\in V(G)}m({u,v})!,
\end{equation}
where $\ell$ is the number of loops in $G$, and $m(u,v)$ denotes the multiplicity of $\{u,v\}$ in $E(G)$. In this way, $\authe(C_k)=2k$, where $C_k$ is a $k$-cycle and $k\geq 1$. Informally, $\authe(G)$ is the number of half-edge permutations in $G$ that preserve both incidence to the same vertex and the matching between half-edges.  

Other notions related to  graphs extend to multigraphs in the natural way.   \par

\subsection{Configuration Model}
We study random graphs with given degree sequence through the so-called \defin{configuration model}, introduced by Bollobás \cite{benderAsymptoticNumberLabeled1978, bollobas1980probabilistic}, which instead yields a random multigraph with the desired degree sequence. See e.g. \cite{vanderhofstadRandomGraphsComplexVolI,vanderhofstadRandomGraphsComplexVolII} for the basic properties of the model.

Given a degree sequence $\bd_n$ of length $n$, the \defin{configuration model $\Conf_n(\bd_n)$} is a uniform random matching of $[m_n]$  (formally, $\Conf_n(\bd_n)\subseteq \binom{[m_n]}{2}$), where we recall that
$m_n = \sum_{i\in [n]} d_i$. 
We refer to
the elements $e\in [m_n]$ as \defin{half-edges}. We say that a half-edge $e\in [m_n]$ is \defin{incident} to a vertex $v\in [n]$ if $\sum_{u < v} d_u <
e \leq \sum_{u\leq v} d_u$. In other words, the first $d_1$ half-edges belong to vertex $1$, the following $d_2$ belong to vertex $2$, and so on. The \defin{underlying multigraph}
of $\Conf_n(\bd_n)$ has vertex set $[n]$ and the number of edges between two different vertices $v_1, v_2\in [n]$ is the number of pairs $\{h_1,h_2\}$ in the matching $\Conf_n(\bd_n)$
where $h_1,h_2$ are incident to $v_1,v_2$, respectively. 

In the following, we identify $\Conf_n(\bd_n)$ with its underlying multigraph. 
Informally, we obtain the multigraph $\Conf_n(\bd_n)$ by attaching $d_v$ half edges to each vertex $v\in [n]$, and matching these half-edges randomly afterwards.
For a sequence of degree sequences $\bd=(\bd_n)_{n\in \NN}$, we denote by $\Conf(\bd)= (\Conf_n(\bd_n))_{n\in \NN}$.
\par

The probability that $\Conf_n(\bd_n)=G$ for a fixed multigraph $G$ with degree sequence $\bd_n$ depends only on its number of loops and the multiplicity of its edges. In particular, if $\bd$ is feasible, conditioning $\Conf_n(\bd_n)$ on being simple (i.e. the absence of loops and multiedges) results in $\Gcal_n(\bd_n)$. It is thus natural to compute the probability that $\Conf_n(\bd_n)$ is simple. The following theorem gives an answer for sequences that satisfy our assumption (in fact, the theorem holds in a slightly more general setting). Recall the definition of $\nu$ in \eqref{eq:def_nu}.
\begin{theorem}[Bollobás \cite{bollobas1980probabilistic}; Janson{{~\cite[Theorem 1.1]{janson2009probability}}}]\label{thm:simple}
Let $\bd$ satisfies \Cref{assump:main}. Then 
\begin{align}\label{eq:simple}
\liminf_{n\to\infty} \Pr(\Conf_n(\bd_n) \text{ is simple})= e^{-\frac{\nu}{2}-\frac{\nu^2}{4}} > 0.
\end{align}
\end{theorem}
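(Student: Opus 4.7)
The plan is to establish that the number of loops $L_n$ and the number of double (multi)edges $M_n$ in $\Conf_n(\bd_n)$ converge jointly in distribution to independent Poisson variables with parameters $\nu/2$ and $\nu^2/4$ respectively. Since $\Conf_n(\bd_n)$ is simple exactly when $L_n=M_n=0$, and $\Pr(\Po(\lambda)=0)=e^{-\lambda}$, independence in the limit would give
\[
\Ln \Pr(\Conf_n(\bd_n)\text{ is simple}) = e^{-\nu/2}\cdot e^{-\nu^2/4},
\]
which is the desired limit (in fact a full limit, not just a $\liminf$). The main tool is the classical method of factorial moments for Poisson convergence.

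For the first step, I would count: a loop at a vertex $v$ corresponds to a pair of half-edges at $v$ matched to each other, and any fixed pair of half-edges is matched with probability $1/(m_n-1)$. Hence
\[
\E{L_n}=\frac{1}{m_n-1}\sum_{v\in[n]}\binom{d_v}{2}=\frac{n\,\mmnt_{n,2}}{2(m_n-1)} \longrightarrow \frac{\mmnt_2}{2\mmnt_1}=\frac{\nu}{2},
\]
using \Cref{assump:main}.(iii) together with $m_n/n=\E{D_n}\to \mmnt_1$. For 2-cycles I would fix a pair of distinct vertices $u,v$, choose two half-edges at each, and match them as a double edge in one of the $2$ possible ways; any such configuration occurs with probability $1/((m_n-1)(m_n-3))$. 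Summing yields
\[
\E{M_n}=\frac{1}{(m_n-1)(m_n-3)}\sum_{u\neq v}\binom{d_u}{2}\binom{d_v}{2},
\]
whose leading term is $\bigl(\sum_v\binom{d_v}{2}\bigr)^2/((m_n-1)(m_n-3)) \to \nu^2/4$. The diagonal correction $\sum_v \binom{d_v}{2}^2$ is $o(n^2)$ since \Cref{assump:main}.(iii) forces the maximum degree to be $o(\sqrt{n})$, so it contributes nothing in the limit.

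For joint convergence, I would extend this to show that for all $j,k\geq 0$ the joint factorial moment $\E{(L_n)_j\,(M_n)_k}$ converges to $(\nu/2)^j\,(\nu^2/4)^k$, which are precisely the joint factorial moments of independent Poisson variables with the stated means; the method of factorial moments then delivers joint convergence in distribution, and in particular $\Ln \Pr(L_n=0,M_n=0)= e^{-\nu/2-\nu^2/4}$. The main obstacle is bookkeeping overlaps: one must sum over all tuples of $j$ candidate loops and $k$ candidate 2-cycles and check that only configurations supported on $2j+4k$ distinct half-edges contribute in the limit. Overlap contributions are controlled by the $o(\sqrt{n})$ maximum-degree bound coming from \Cref{assump:main}.(iii), so they vanish asymptotically; this is precisely the strategy carried out in detail by Janson~\cite{janson2009probability}.
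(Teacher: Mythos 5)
Your proof sketch is correct and takes essentially the same route as the paper's own proof, which appears as \Cref{thm:cycle_distribution} and its ``in particular'' conclusion: the paper establishes joint Poisson convergence of the short-cycle counts $X_{n,k}$ for any finite set of lengths via the method of moments (using \Cref{lem:expected_copies} for expected copy counts and an excess argument, equation \eqref{eq:excess}, to kill the overlapping configurations), then specializes to $k\in\{1,2\}$ since simplicity is precisely $X_{n,1}=X_{n,2}=0$. Your version restricts attention to loops and double edges from the start and defers the bookkeeping of overlapping configurations to the $o(\sqrt{n})$ maximum-degree bound and to Janson's reference, which is the same idea that the paper packages through the excess computation; both yield the full limit, of which the $\liminf$ in the statement is a weaker formulation.
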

\noindent We will revisit this theorem in forthcoming sections.

Let us also briefly recall the role of $\nu$ in the phase transition of $\Conf(\bd)$. Under \Cref{assump:main}, $\nu$ determines the appearance of a giant component in $\Conf(\bd)$ \cite{janson2009new,molloyCriticalPointRandom1995,}. Namely,  $\Conf_n(\bd_n)$ a.a.s. contains a linear order component if and only if $\nu>1$. \par

\subsection{Exchange of Limit and Sum Operators} 
Some of our results rely on the exchange of limit and sum operations. We recall the notion of tight sequence and an equivalent characterization.
\begin{definition}
\label{def:uniform_sum}
Let $(f_n)_{n\in \NN}$ be a sequence of functions $f_n: S\rightarrow
[0,\infty)$ where $S$ is a countable set. The sequence 
$(f_n)_{n\in \NN}$ is \defin{tight} if for every $\epsilon>0$ there exists a finite $T\subset S$ satisfying 
$\sum_{s\not\in T} f_n(s) < \epsilon$ for all $n\in \NN$.
\end{definition}

\begin{lemma}
\label{lem:tight}
Let $(f_n)_{n\in \NN}$ be a sequence of functions $f_n: S\rightarrow
[0,\infty)$ where $S$ is a countable set. Suppose that 
\begin{itemize}
    \item [(1)] for each $s\in S$, 
$f(s)=\lim_{n\to\infty} f_n(s)$ exists and is finite,
    \item [(2)] $\sum_{s\in S} f(s)$ is finite.
\end{itemize}
Then $\lim_{n\to\infty}
\sum_{s\in S} f_n(s)
= \sum_{s\in S} f(s)$
    if and only if
    $(f_n)_{n\in \NN}$ is tight. 
\end{lemma}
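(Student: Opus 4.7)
The plan is to prove the two directions separately with routine $\varepsilon$-arguments.

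For ``tight implies convergence'', fix $\varepsilon>0$ and, using tightness, pick a finite $T\subset S$ with $\sum_{s\notin T} f_n(s) < \varepsilon$ for every $n$. Since every finite subset $T'\subseteq S\setminus T$ satisfies $\sum_{s\in T'}f(s) = \lim_n \sum_{s\in T'}f_n(s) \leq \varepsilon$, taking the supremum over such $T'$ gives $\sum_{s\notin T}f(s)\leq \varepsilon$ as well. Splitting
\[
\Bigl|\sum_{s\in S}f_n(s) - \sum_{s\in S}f(s)\Bigr| \;\leq\; \sum_{s\in T}|f_n(s)-f(s)| \;+\; \sum_{s\notin T}f_n(s) \;+\; \sum_{s\notin T}f(s),
\]
the last two terms are bounded by $2\varepsilon$ uniformly in $n$, while the first is a finite sum whose terms individually vanish as $n\to\infty$. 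Taking $\limsup$ and then $\varepsilon\to 0$ yields the desired convergence.

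For the converse, assume $\sum_s f_n(s)\to \sum_s f(s)=:L<\infty$ and fix $\varepsilon>0$. Pick a finite $T_0\subset S$ with $\sum_{s\in T_0}f(s)\geq L-\varepsilon/2$. Since $T_0$ is finite, $\sum_{s\in T_0}f_n(s)\to \sum_{s\in T_0}f(s)$, so combined with $\sum_s f_n(s)\to L$ there is an $N$ such that for every $n\geq N$,
\[
\sum_{s\notin T_0}f_n(s) \;=\; \sum_{s\in S}f_n(s) - \sum_{s\in T_0}f_n(s) \;<\; (L+\varepsilon/4)-\bigl((L-\varepsilon/2)-\varepsilon/4\bigr) \;=\; \varepsilon.
\]
The convergence of $\sum_s f_n(s)$ to a finite limit forces $\sum_s f_n(s)<\infty$ for all sufficiently large $n$, and (without loss of generality, after enlarging $N$) for each of the finitely many remaining indices $n<N$ pick a finite $T_n$ with $\sum_{s\notin T_n}f_n(s)<\varepsilon$. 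Setting $T:=T_0\cup T_1\cup\cdots\cup T_{N-1}$ gives a single finite set witnessing tightness at level $\varepsilon$.

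The only subtle point is the bookkeeping for the small indices in the converse direction, which is handled by patching in a finite tail-witness for each of the finitely many initial $n$. The rest is the standard ``pointwise convergence plus uniform tails'' argument specialised to counting measure on the countable set $S$, so I do not anticipate any serious obstacle.
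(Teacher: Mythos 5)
The paper states \Cref{lem:tight} without proof, so there is nothing to compare against; your argument is a correct proof. Both directions use the standard ``split into a finite head plus a uniformly small tail'' decomposition, and the arithmetic checks out. The only loose end is the parenthetical ``(without loss of generality, after enlarging $N$)'' for the small indices in the converse: if some initial $f_n$ had $\sum_s f_n(s)=\infty$ then no finite $T_n$ exists and tightness genuinely fails, so this is not a WLOG but a tacit hypothesis that $\sum_s f_n(s)<\infty$ for every $n$. That hypothesis is implicit in the lemma's intended use (the paper always applies it with $f_n$ a subprobability mass function), and once it is made explicit your patching of the finitely many initial indices is exactly right.
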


\subsection{Probability preliminaries}

 In order to study the small cycle distribution of $\Conf(\bd)$ we will need the next auxiliary result~\cite[Theorem 1.23]{bollobas2001random}.
\begin{theorem}[Method of Moments for Poisson random variables
]
	\label{thm:moments_method}
	Let $k\in \NN$. For each $n\in \NN$, let $X_{n,1},$ $\dots, X_{n,k}$ be random variables over the
	same measurable space. Suppose that there are real
	positive constants $\lambda_1,\dots, \lambda_k$
	such that for all $a_1,\dots, a_k\in \ZZpos$ 
	\[
	\lim_{n\to \infty}
	\E{
		\prod_{i=1}^k \binom{X_{n,i}}{a_i}
	} = \prod_{i=1}^k \frac{\lambda_i^{a_i}}{a_i!}.
	\]
	Then $(X_{n,1},\dots, X_{n,k})$ converges in distribution to a vector of independent Poisson random variables with parameters $\lambda_1,\dots,\lambda_k$ as $n\to\infty$.
\end{theorem}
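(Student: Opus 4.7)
My plan is to prove joint convergence in distribution via the classical method of moments, routed through probability generating functions. For $\NN_0$-valued $X_1,\dots,X_k$ and $z_i\in[0,1]$, the joint PGF admits the factorial-moment expansion
\begin{equation*}
\E{\prod_{i=1}^k z_i^{X_i}} \;=\; \E{\prod_{i=1}^k (1+(z_i-1))^{X_i}} \;=\; \sum_{\vec a \in \NN_0^k}\E{\prod_{i=1}^k \binom{X_i}{a_i}}\prod_{i=1}^k (z_i-1)^{a_i}.
\end{equation*}
A direct computation shows that for independent $Y_i\sim \Po(\lambda_i)$ one has $\E{\prod_i \binom{Y_i}{a_i}}=\prod_i \lambda_i^{a_i}/a_i!$ and the joint PGF equals $\prod_i e^{\lambda_i(z_i-1)}$. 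Thus the hypothesis says precisely that the joint factorial moments of $(X_{n,1},\dots,X_{n,k})$ converge to those of $\bigotimes_i \Po(\lambda_i)$.

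First I would establish tightness. Since $\binom{X_{n,i}}{m}\geq 1$ whenever $X_{n,i}\geq m$, Markov's inequality yields
\begin{equation*}
\Pr(X_{n,i}\geq m)\;\leq\; \E{\binom{X_{n,i}}{m}}\;\xrightarrow{n\to\infty}\;\frac{\lambda_i^{m}}{m!}\;\xrightarrow{m\to\infty}\; 0,
\end{equation*}
so each marginal is tight, and hence so is the joint sequence. By Prohorov's theorem any subsequence admits a further subsequence converging in distribution to some probability measure $\mu$ on $\NN_0^k$. Along this subsequence I would pass the limit inside each joint factorial moment $\E{\prod_i \binom{X_{n,i}}{a_i}}$: the uniform integrability needed is supplied by the boundedness of the next-order moment $\E{\prod_i \binom{X_{n,i}}{a_i+1}}$, which follows from its convergence. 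Consequently the joint factorial moments of $\mu$ coincide with $\prod_i \lambda_i^{a_i}/a_i!$, and since Poisson distributions are moment-determined (Carleman's condition), $\mu=\bigotimes_i \Po(\lambda_i)$. As every subsequential limit agrees, the whole sequence converges in distribution to $\bigotimes_i \Po(\lambda_i)$.

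The main obstacle is the last step, turning pointwise convergence of factorial moments into weak convergence of the joint law. My approach handles this by combining tightness, which comes essentially for free from Markov, with the moment-determinacy of the Poisson. A more hands-on alternative would be to prove convergence of the joint PGFs directly on a neighbourhood of $(1,\dots,1)$ by dominated convergence in the factorial-moment expansion; the required summable majorant is supplied by the bound $\E{\binom{X_{n,i}}{a}}\leq 2\lambda_i^{a}/a!$ that pointwise convergence yields for every fixed $a$ once $n$ is large, together with a separate bound for the finitely many small-$a$ terms.
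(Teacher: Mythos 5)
The paper does not prove this statement at all: it is imported verbatim as Theorem~1.23 of Bollob\'as, \emph{Random Graphs}, and used as a black box. So there is no in-paper argument to compare against. Your proof is a legitimate, if somewhat more abstract, alternative to the route those textbooks usually take, which is purely combinatorial: one writes the exact inclusion--exclusion identity $\Pr(X_n=j)=\sum_{r\ge j}(-1)^{r-j}\binom{r}{j}\E{\binom{X_n}{r}}$, uses the Bonferroni inequalities to sandwich $\Pr(X_n=j)$ between truncations of this alternating series, and passes to the limit term-by-term (with the obvious multivariate extension). That avoids Prohorov and moment-determinacy entirely; your version buys a cleaner conceptual picture at the price of invoking tightness, de la Vall\'ee Poussin, and Carleman.

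Two points in your writeup need repair. The uniform-integrability majorant $\E{\prod_i\binom{X_{n,i}}{a_i+1}}$ does not control $Z_n:=\prod_i\binom{X_{n,i}}{a_i}$: the ratio of the two is $\prod_i(X_{n,i}-a_i)/(a_i+1)$, which does not go to infinity (indeed it vanishes) when one coordinate stays at $a_i$ while another blows up, which is precisely the regime you must rule out. The fix is to bump one coordinate at a time: $\{Z_n>M\}$ forces $\sum_i X_{n,i}$ large, so
\[
\E{Z_n\, 1_{Z_n>M}}\ \le\ \frac{1}{C_M}\,\E{Z_n\sum_i(X_{n,i}-a_i)}\ =\ \frac{1}{C_M}\sum_i (a_i+1)\,\E{\binom{X_{n,i}}{a_i+1}\prod_{j\ne i}\binom{X_{n,j}}{a_j}},
\]
with $C_M\to\infty$; each summand is bounded in $n$ by the hypothesis, so UI holds. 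Secondly, the ``more hands-on alternative'' via dominated convergence in the PGF expansion is not complete as stated: pointwise convergence gives the bound $\E{\binom{X_{n,i}}{a}}\le 2\lambda_i^{a}/a!$ only for $n\ge N_a$ with $N_a$ depending on $a$, so you do not get a single summable dominating sequence valid for all $n$ without extra uniform-in-$a$ input. I would either drop that alternative or supply such a uniform bound explicitly.
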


\subsection{Sets of Partial Sums}

Given a convergent series $\sum_{n\in \NN} p_n$ whose terms are non-negative, its \defin{set of partial sums} is defined as $\{ 
\sum_{n\in A} p_n \colon A \subseteq \NN \}\subseteq [0,+\infty)$. As part of the proof of \Cref{thm:main} we need to analyze the geometry of some of these sets.
Our tool for this matter is the following classical result conjectured by Kakeya \cite{kakeya} and later proven in \cite{nymann2000paper}. 

\begin{lemma}[Kakeya's Criterion]
	\label{ch:binom_lims.kakeya}
	Let $\sum _{n\in \NN}p_n$ be a convergent series of non-negative real numbers such that $p_n\geq p_{n+1}$ for all $n\in \NN$. Then 
	the following are equivalent:
	\begin{itemize}
		\item[(1)] $p_i \le \sum_{j>i} p_j$ for all $i \in \NN$.
		\item[(2)]
		\[
		\left\{\sum_{
n\in A} p_n  \colon  A \subseteq  \mathbb{N}\right\}
		= \left[0, \sum_{n\in\NN} p_n\right].
		\]
	\end{itemize}
	If the condition (1) only holds for all values of $i$ 
	large enough, 
	then the set $\left\{\sum_{n \in A} p_n  \colon  A \subseteq  \mathbb{N}\right \}$
	is a finite union of intervals. 
\end{lemma}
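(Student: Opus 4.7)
The plan is to prove the equivalence (1) $\Leftrightarrow$ (2) directly, and then to deduce the ``finite union of intervals'' conclusion by applying the forward direction to a tail of the sequence. For (1) $\Rightarrow$ (2) I would use a greedy construction: given a target $t \in [0, \sum_n p_n]$, initialize $t_0 = t$ and, at step $n$, include $n$ in $A$ and set $t_n = t_{n-1} - p_n$ if $p_n \leq t_{n-1}$, otherwise skip $n$ and set $t_n = t_{n-1}$. The key invariant to maintain by induction is $t_n \leq \sum_{k > n} p_k$: it is automatic when $n$ is included, and when $n$ is skipped it is exactly the place where hypothesis (1) enters, yielding $t_n = t_{n-1} < p_n \leq \sum_{k > n} p_k$. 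Since $\sum_{k > n} p_k \to 0$, I obtain $t_n \to 0$, so that $\sum_{n \in A} p_n = t$.

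For the converse (2) $\Rightarrow$ (1) I would argue by contrapositive. Suppose some index $i$ violates (1), i.e., $p_i > T_i := \sum_{j > i} p_j$; I would show that no $t$ in the nonempty interval $(T_i, p_i)$ is representable as a subset sum. Given a hypothetical $A$ with $t = \sum_{n \in A} p_n$, split $A = A_1 \sqcup A_2$ with $A_1 \subseteq [1, i-1]$ and $A_2 \subseteq [i, \infty)$. If $i \in A_2$, then $\sum_{n \in A_2} p_n \geq p_i > t$, a contradiction with $\sum_{n \in A_1} p_n \geq 0$. Otherwise $\sum_{n \in A_2} p_n \leq T_i < t$, which forces $A_1$ to be non-empty (hence $i \geq 2$); monotonicity of $(p_n)$ then gives $\sum_{n \in A_1} p_n \geq p_{i-1} \geq p_i > t$, once again a contradiction. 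This is the step where the hypothesis $p_n \geq p_{n+1}$ plays its essential role, by forcing any nonempty sum drawn from indices below $i$ to already overshoot $t$.

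To treat the statement about a finite union of intervals, I would apply direction (1) $\Rightarrow$ (2) to the tail sequence $(p_n)_{n \geq N}$, which is still non-increasing and still satisfies condition (1) by assumption. This yields $\{\sum_{n \in A} p_n : A \subseteq [N, \infty)\} = [0, \sum_{n \geq N} p_n]$, so the full set of partial sums is the Minkowski sum of the finite set $\{\sum_{n \in A} p_n : A \subseteq [1, N-1]\}$ with this interval, hence a finite union of closed intervals (at most $2^{N-1}$ of them before possible mergings). The main obstacle I anticipate is phrasing the invariant in the greedy argument for (1) $\Rightarrow$ (2) so that hypothesis (1) is invoked at precisely the right step; once that invariant is in place, both directions and the tail argument are short.
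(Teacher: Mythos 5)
The paper states this lemma as a classical result (citing Kakeya and Nymann--S\'aenz) and does not include a proof, so there is no ``paper's own proof'' to compare against. Your argument is correct and is essentially the standard one: the greedy construction for $(1)\Rightarrow(2)$ with the invariant $t_n\le\sum_{k>n}p_k$, the contrapositive for $(2)\Rightarrow(1)$ exhibiting the gap $(\sum_{j>i}p_j,\,p_i)$ using monotonicity to show no subset sum can land there, and the Minkowski-sum decomposition for the ``finite union of intervals'' clause are all sound; the only delicate point, invoking hypothesis (1) precisely in the ``skip'' branch of the greedy step, is handled correctly.
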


\section{Outline of the proof of \texorpdfstring{\Cref{thm:main}}.}

Here we briefly outline the proof of our main result.
Throughout the proof, we will suppose that $\bd$ satisfies \Cref{assump:main}. For the sake of conciseness, this will be implicitly assumed in all our statements.

The first step towards the result is to understand the distribution of short cycle in $\Gcal_n(\bd_n)$, which is done in \Cref{sec:cycles}. This section contains some already known results that are dispersed in the literature and a secondary goal is to collect them all in a single document. The most important conclusion of this part is \Cref{cor:acyclic}, which states that the asymptotic probability that $\Gcal_n(\bd_n)$ is acyclic admits a nice expression in terms of the parameter $\nu$:
\[
p_{\acyc}(\bd)=
\sqrt{1 - \nu } \cdot e^{\frac{\nu}{2} + \frac{\nu^2}{4}}.
\]
The study of the cycle structure gives access to the distribution of the unicyclic components of $\Gcal_n(\bd_n)$, which form the fragment of the random graph. This is done in \Cref{sec:fragment}. We obtain an expression for the probability a particular fixed graph is the fragment of $\Gcal_n(\bd_n)$ (\Cref{cor:prob_frag}), and show that its size distribution is tight, provided that $\nu<1$ (\Cref{cor:frag_integrable}).

From here on, our strategy is similar to the one used in \cite{larrauriLimitingProbabilitiesFirst2022} to deal with the binomial random graph $\Gcal_n(c/n)$, however, the probabilistic arguments here are more convoluted. 

We split the proof of \Cref{thm:main} into two parts. In the first part, developed in \Cref{sec:subcritical}, we show that $\overline{L(\bd)}$ consists of a finite union of closed intervals when $0< \nu < 1$ (\Cref{lem:finite_gaps}).  \Cref{thm:fragment_zeroone} establishes that the subcritical regime the random graph fragment determines whether a given {\sc FO}-property is satisfied or not. This is used to prove that $\overline{L(\bd)}$ is the set of partial sums of fragment probabilities (\Cref{thm:partial_sums}). The desired result is obtained by Kakeya's Criterion. Compared to the approach in \cite{larrauriLimitingProbabilitiesFirst2022}, fragment probabilities have a more complex structure that heavily depends on the full degree sequence $\bd_n$. This rules out the possibility of using specific examples for fragments in our bounds, which was the strategy in the binomial case. We overcome this difficulty, roughly, by classifying fragments into classes depending on the amount of cycles of each length they contain, and obtaining bounds for each whole class, rather than for individual fragments.

In the second part, developed in \Cref{sec:transition}, we establish a sharp threshold phenomenon at $\nu=\nu_0$ for the property that $\Gcal_n(\bd_n)$ has a limit probability set for {\sc FO}-properties that is dense in the unit interval, i.e. $\overline{L(\bd)}=[0,1]$. The constant $\nu_0$ is defined as the unique value of $\nu$ such that a random graph with $\nu=\nu_0$ is acyclic with probability exactly $1/2$ (see \Cref{rem:3}). This is done in \Cref{lem:transition} and its proof is a two-fold application of Kakeya's Criterion, distinguishing the cases $0<\nu < \nu_0$ and  $\nu\geq \nu_0$.

\section{Cycle distribution}
\label{sec:cycles}

In this section we study the distribution of short cycles in $\Conf(\bd)$. We stress that most of the results presented here are already known and follow from well-established techniques. However these results are scattered through the literature, and this section aims to provide a self-contained compendium of the cycle distribution in the configuration model. We will prove it under \Cref{assump:main}, but it is worth noticing that the assumption (iv) is not needed for these results to hold (and similarly in \Cref{sec:fragment}).

\comment{
\begin{lemma}
\label{lem:aux_ineq}
    Let $\alpha_1, \alpha_2,\dots, \alpha_k, \beta_1,\beta_2,\dots, \beta_k$ be positive integers satisfying $\alpha_i\geq \beta_i$ for all $i\in [k]$. Define $\alpha= \sum_{i\in [k]} \alpha_i$, and $\beta= \sum_{i\in [k]} \beta_i$.
    Then 
\begin{equation}
\label{eq:inequality_aux}
\prod_{i\in [k]} 
\prod_{0\leq j < \beta_i} 
\frac{\alpha_i}{\alpha_i - j}   \geq 
\prod_{0\leq j < \beta - k + 1} \frac{\alpha}{\alpha-j}.
\end{equation}
\end{lemma}
\begin{proof}
    The proof is by induction on $\beta$ for each $k$ and $\alpha_1,\dots, \alpha_k$ fixed. For $\beta=k$ the result is trivial. Suppose now that $\beta>k$. Then, for some $t\in [k]$ it must be that $\beta_t-1 \geq (\beta - k)\alpha_t/\alpha$. This is because
    \[
    \sum_{i\in [k]} \beta_i - 1 =
    \beta - k =  \sum_{i\in [k]}  (\beta - k)\alpha_i/\alpha.
    \]
    In particular, this means that
    \begin{equation}
\label{eq:inequality_aux_aux}
    \frac{\alpha_t}{\alpha_t - \beta_t + 1} \geq \frac{\alpha}{\alpha - \beta + k}.
    \end{equation}
    Observe that our assumption $\beta>k$ implies $\beta_t>1$. Additionally, by the induction hypothesis
    \[
   \prod_{i\in [k]} 
    \prod_{0\leq j < \beta^\prime_i} 
    \frac{\alpha_i}{\alpha_i - j}   \geq 
    \prod_{0\leq j < \beta^\prime - k + 1} \frac{\alpha}{\alpha-j},
    \]
    where $\beta^\prime_i = \beta_i$ for $i\neq t$, 
    $\beta^\prime_t = \beta_t - 1$, and $\beta^\prime = \beta - 1$. Multiplying by \eqref{eq:inequality_aux_aux} yields the result. 
    
\end{proof}

\begin{lemma}
\label{lem:aux_ineq2}
    Let $\Delta_n$ be a function on $n$ satisfying $\Delta_n=o(\sqrt{n})$.
    There is a function $\xi_n=o(1)$ depending only on $n$ and $\Delta_n$ such that
    for all integers $a$ satisfying
    $\Delta_n \leq a < \widehat{n}$:
    \[ 
    \ln\left(\frac{\widehat{n}^{\Delta_n}(\widehat{n}-a)! }{(\widehat{n} + \Delta_n - a)!}\right) \leq a \xi_n . \] 
    and all integers $0\leq a < \Delta_n$
    \[
    \ln\left(\frac{\widehat{n}^a (\widehat{n}-a)!}{\widehat{n}!}\right) \leq a \xi_n .
    \]
\end{lemma}
\todo[inline]{I'm not sure about the comment ``depending only on $n$ and $\Delta_n$'', on what else could it depend? Isn't it enough that it depends on $n$? since $\Delta_n$ can be bounded by a function of $n$. I added another proof in red, that is a bit more concise.}
\begin{proof}
    We begin with the first part of the statement. Suppose that  $\Delta_n \leq a < \widehat{n}$.
    By Stirling's approximation, we know that for all $m>0$ \[
    \sqrt{2\pi m}  \left(\frac{m}{e}\right)^m e^{\frac{1}{12m+1}} \leq
    m! \leq  \sqrt{2\pi m} \left(\frac{m}{e}\right)^m e^{\frac{1}{12m}} .\]
    Hence
    \[
    \frac{\widehat{n}^{\Delta_n}(\widehat{n}-a)! }{(\widehat{n} + \Delta_n - a)!}
    \leq 
    C D,
    \]
    where
    \[
    C= 
    \left(\frac{\widehat{n}}{\widehat{n} + \Delta_n - a}\right)^{\Delta_n}
    \left(\frac{\widehat{n}-a}{\widehat{n} + \Delta_n - a}\right)^{\widehat{n}-a}
    e^{\Delta_n}
    \]
    \[
    D = \sqrt{\frac{\widehat{n}-a}{\widehat{n} + \Delta_n - a}} 
    e^{\frac{1}{12(\widehat{n} - a)} - \frac{1}{12(\widehat{n} - a + \Delta_n) + 1}}.
    \]
    Clearly $D\leq e^{1/12}$ for all $\Delta_n \leq a < \widehat{n}$, so 
    $\ln(D)/a$ tends to zero uniformly with $n$.
    Now we need to prove the same for $\ln(C)/a$.
    We consider two cases. First, suppose that $\Delta_n \leq a \leq \Delta_n \sqrt{\widehat{n}}$. 
    It holds that 
    \[
    C = 
    \left(1 + \frac{a- \Delta_n}{\widehat{n} + \Delta_n - a}\right)^{\Delta_n}
    \left(1 - \frac{\Delta_n}{\widehat{n} + \Delta_n - a}\right)^{\widehat{n}-a}
    e^{\Delta_n}.
    \]
    Using the inequality $1+x\leq e^x$ for all $x\in \mathbb{R}$, we get that
    \[
    C \leq 
    \exp\left[ 
    \Delta_n\frac{
    (a - \Delta_n) - (\widehat{n} + a) + ( \widehat{n} - a + \Delta_n) }{
    \widehat{n} - a + \Delta_n}
    \right] 
    = \exp\left[ 
    \frac{ \Delta_n a}{
    \widehat{n} - a + \Delta_n}
    \right].
    \]
    Thus, all $\Delta_n \leq a \leq \Delta_n \sqrt{\widehat{n}}$, 
    \[
    \ln(C)/a = 
    \frac{ \Delta_n }{
    \widehat{n} - a + \Delta_n}
    \leq 
    \frac{ \Delta_n }{
    \widehat{n} - \Delta_n\sqrt{n} + \Delta_n},         
    \]
    which tends to zero because
    $\Delta_n= o(\sqrt{\widehat{n}})$. \par
    Now suppose that $\Delta_n \sqrt{\widehat{n}} < a < \widehat{n}$. It holds that
    $\widehat{n}/(\widehat{n} +\Delta_n -a) \leq a/\Delta_n$. Indeed,
    \[
    \Delta_n \widehat{n} - (\widehat{n} + \Delta_n - a) a = 
    (\Delta_n - a)(\widehat{n} - a) \leq 0. 
    \]
    using this inequality on the first factor of $C$, and the
    inequality $(1-s)^t\leq e^{st}$ on the second factor, we get that
    \[
    C \leq  \left( \frac{a}{\Delta_n} \right)^{\Delta_n}
    \exp\left[\frac{\Delta_n^2}{\widehat{n} + \Delta_n - a}\right].
    \]
    This way, 
    \[
    \ln(C)/a \leq \left(
    \frac{\ln(a/\Delta_n)}{a/\Delta_n}
    \right) \left( \frac{\Delta_n^2}{a(\widehat{n}+\Delta_n - a)} \right).
    \]
    Observe that $\ln(z)/z$ is decreasing in $z$ when $z>e$.
    Additionally, $\widehat{n} + \Delta_n - a > \Delta_n$. Thus, 
    for sufficiently large $n$ and all $\Delta_n \sqrt{\widehat{n}} < a < \widehat{n}$
    \[
    \ln(C)/a \leq \left(
    \frac{\ln(\sqrt{n})}{\sqrt{n}}
    \right) \left( \frac{1}{\sqrt{n}} \right),
    \]
    which tends to zero, as we wanted to show. \par
    Now let us show the second part of the statement. 
    Suppose that $0\leq a < \Delta_n$.
    It holds that
    \[
    \frac{\widehat{n}^a (\widehat{n}-a)!}{\widehat{n}!} \leq \left(\frac{\widehat{n}}{\widehat{n}-a}\right)^a 
    \leq e^{a^2/(\widehat{n}-a)}\leq e^{a \frac{\Delta_n}{\widehat{n}-\Delta_n}},
    \]
    where we have used that $a<\Delta_n < \widehat{n}$ in the last inequality.
    The function $\frac{\Delta_n}{\widehat{n}-\Delta_n}$ tends to zero with $n$ and depends only on
    $n$ and $\Delta_n$, as we wanted. 
    
\end{proof}

{\color{red}

\begin{proof}
    We begin with the first part of the statement. Suppose that  $\Delta_n \leq a < n$.
    By Stirling's approximation, we know that for all $m>0$ \[
    \sqrt{2\pi m}  \left(\frac{m}{e}\right)^m e^{\frac{1}{12m+1}} \leq
    m! \leq  \sqrt{2\pi m} \left(\frac{m}{e}\right)^m e^{\frac{1}{12m}} .\]
    Hence
    \[
    \frac{n^{\Delta_n}(n-a)! }{(n + \Delta_n - a)!}
    \leq 
    C D,
    \]
    where
    \begin{eqnarray} 
    C&=& 
    \left(1 + \frac{a- \Delta_n}{n + \Delta_n - a}\right)^{\Delta_n}
    \left(1 - \frac{\Delta_n}{n + \Delta_n - a}\right)^{n-a}
    e^{\Delta_n},\\
    D &=& \sqrt{\frac{n-a}{n + \Delta_n - a}} 
    e^{\frac{1}{12(n-a)} - \frac{1}{12(n-a + \Delta_n) + 1}}.
    \end{eqnarray}
    Clearly $D\leq e^{1/12}$ for all $\Delta_n \leq a < n$, so 
    $\ln(D)/a$ tends to zero uniformly with $n$.
    Now we need to prove the same for $\ln(C)/a$.
    We consider two cases. 
    First, suppose that $n-a\leq n^{2/3}$. Since $\Delta_n=o(\sqrt{n})$, we have for $n$ large enough
    \[
    C\leq e^{\Delta_n}(n/\Delta_n)^{\Delta_n} \leq e^{n^{2/3}}
    \]
    Since $n\sim a$, we have that $\ln(C)/a$ tends to zero.

    Otherwise, suppose that $n-a\geq n^{2/3}$. 
    Using the inequality $1+x\leq e^x$ for all $x\in \mathbb{R}$, we get that
    \[
    C \leq 
    \exp\left[ 
    \Delta_n\frac{
    (a - \Delta_n) - (n + a) + ( n - a + \Delta_n) }{
    n - a + \Delta_n}
    \right] 
    = \exp\left[ 
    \frac{ \Delta_n a}{
    n - a + \Delta_n}
    \right].
    \]
    Thus, all $\Delta_n \leq a \leq \Delta_n \sqrt{n}$, 
    \[
    \frac{\ln(C)}{a} = 
    \frac{ \Delta_n }{
    n - a + \Delta_n}
    \leq 
    \frac{ \Delta_n }{
    n^{2/3}},         
    \]
    which tends to zero because
    $\Delta_n= o(\sqrt{n})$. \par
    
    Now let us show the second part of the statement. 
    Suppose that $0\leq a < \Delta_n$.
    It holds that
    \[
    \frac{n^a (n-a)!}{n!} \leq \left(\frac{n}{n-a}\right)^a 
    \leq e^{a^2/(n-a)}\leq e^{a \frac{\Delta_n}{n-\Delta_n}},
    \]
    where we have used that $a<\Delta_n < n$ in the last inequality.
    The function $\frac{\Delta_n}{n-\Delta_n}$ tends to zero with $n$ and depends only on
    $n$ and $\Delta_n$, as we wanted. 
    
\end{proof}

}
}

\begin{lemma}
\label{lem:expected_copies_upper}
Let $H$ be a multigraph whose minimum degree is at least $2$. Let $h=|V(H)|$, $h_i=\vert \{ v \in V(H): \deg(v)=i   \} \vert$, and $\ell=|E(H)|$.
Let $X_{n}(H)$ be the number of $H$-copies in $\Conf_n(\bd_n)$. Then
\begin{equation}\label{eq:bound_Xi}
\E{X_{n}(H)} \leq \Xi_n(H) e^{h \xi_n}, 
\end{equation}
where 
\begin{equation}\label{eq:def_Xi}
\Xi_n(H) \coloneqq  
\frac{ (\widehat{n})_h }{\authe(H)\widehat{\lambda}_n^{h} \prod_{i=1}^{\ell} (m_n-2i +1)}\prod_{i\geq 0} \rho_{n,i}^{h_i},
\end{equation}
and $\xi_n=o(1)$ is a sequence depending only on $\Delta_n$ and $n$.
\end{lemma}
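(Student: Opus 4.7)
The plan is to derive an exact formula for $\E{X_n(H)}$ by enumerating $H$-copies in $\Conf_n(\bd_n)$, and then bound the resulting injective sum using the minimum-degree constraint on $H$.

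For the enumeration, each $H$-copy is specified by an injection $\phi\colon V(H)\to[n]$, together with an ordered selection of $\deg_H(v)$ half-edges at each $\phi(v)$ (contributing $(d_{\phi(v)})_{\deg_H(v)}$ choices), and the event that these $2\ell$ half-edges are matched in accordance with $E(H)$, which occurs with probability $\prod_{i=1}^{\ell}(m_n-2i+1)^{-1}$ in the uniform matching of $[m_n]$. Dividing by the overcount $\authe(H)$ yields
\[
\E{X_n(H)} = \frac{1}{\authe(H)\prod_{i=1}^{\ell}(m_n-2i+1)}\sum_{\phi\colon V(H)\to[n]\text{ inj.}}\prod_{v\in V(H)}(d_{\phi(v)})_{\deg_H(v)}.
\]
Since $\deg_H(v)\geq 2$ for every $v$, the summand vanishes unless $\phi$ maps into $\widehat{V}\coloneqq\{u\in[n]\mid d_u\geq 2\}$, and the sum reduces to injections $V(H)\to\widehat{V}$.

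For the bound, I would write the restricted sum as $(\widehat{n})_h\cdot\mathbb{E}_\phi\!\left[\prod_v(d_{\phi(v)})_{\deg_H(v)}\right]$, with $\phi$ a uniformly random injection $V(H)\to\widehat{V}$, and decouple the expectation. Ordering $\widehat{V}$ by degree makes each function $u\mapsto(d_u)_k$ non-decreasing, and the random variables $\phi(v_1),\ldots,\phi(v_h)$ (viewed under this ordering) are negatively associated by Joag-Dev--Proschan's theorem for permutation distributions. Applying the product inequality for negatively associated coordinate-wise non-decreasing functions gives
\[
\mathbb{E}_\phi\!\left[\prod_v(d_{\phi(v)})_{\deg_H(v)}\right]\leq \prod_v\mathbb{E}_\phi\!\left[(d_{\phi(v)})_{\deg_H(v)}\right] = \prod_v\frac{\rho_{n,\deg_H(v)}}{\widehat{\lambda}_n} = \prod_i\!\left(\frac{\rho_{n,i}}{\widehat{\lambda}_n}\right)^{h_i},
\]
where the marginal computation uses that $\phi(v)$ is uniform on $\widehat{V}$ and $\sum_{u\in\widehat{V}}(d_u)_k = n\rho_{n,k}$ for $k\geq 2$. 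Combining with the explicit formula above gives $\E{X_n(H)}\leq \Xi_n(H)$, which is even stronger than the stated bound (the factor $e^{h\xi_n}$ being slack).

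Should one wish to avoid the negative-association machinery, the elementary alternative is to drop the injectivity constraint, yielding $\sum_{\phi\text{ inj.}}\prod_v(d_{\phi(v)})_{\deg_H(v)}\leq n^h\prod_i\rho_{n,i}^{h_i}$ and hence $\E{X_n(H)}\leq \Xi_n(H)\cdot\widehat{n}^h/(\widehat{n})_h$. The main obstacle is then showing $\widehat{n}^h/(\widehat{n})_h\leq e^{h\xi_n}$ for some $\xi_n=o(1)$ depending only on $\Delta_n$ and $n$. Since this inequality fails for $h$ comparable to $\widehat{n}$, one splits into the regimes $h\leq\Delta_n$ and $h>\Delta_n$: the former admits the direct estimate $\widehat{n}^h/(\widehat{n})_h\leq \exp\!\left(h\Delta_n/(\widehat{n}-\Delta_n)\right)$, while the latter requires a more careful Stirling-based analysis, as outlined in the commented-out auxiliary lemma in the authors' draft. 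In either regime, the condition $\Delta_n=o(\sqrt{n})$ granted by Assumption~\ref{assump:main}.(iii) drives $\xi_n$ to zero uniformly.
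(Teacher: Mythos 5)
Your main argument via negative association is correct and actually takes a genuinely different route from the paper, yielding a cleaner and strictly stronger bound. The paper's proof works degree-profile by degree-profile: it bounds, for each choice of degrees $(a_1,\dots,a_h)$, the count of ordered vertex tuples realizing that profile via the combinatorial inequality in Lemma~\ref{lem:aux_ineq}, whose unavoidable correction factor $\prod_{\beta-k+1\le j<\beta}\tfrac{\alpha}{\alpha-j}$ then has to be absorbed into $e^{h\xi_n}$ by the Stirling estimate of Lemma~\ref{lem:aux_ineq2}. (That correction is genuinely necessary at the profile level --- the naive ``sampling without replacement $\le$ sampling with replacement'' inequality fails for individual profiles.) By instead bounding the whole expectation $\mathbb{E}_\phi\bigl[\prod_v (d_{\phi(v)})_{\deg_H(v)}\bigr]$ directly through negative association of sampling without replacement (Joag-Dev--Proschan permutation distributions, plus closure of NA under taking subvectors, plus the iterated product inequality for coordinatewise non-decreasing non-negative functions), you avoid the per-profile correction entirely and obtain $\E{X_n(H)}\le\Xi_n(H)$ with no slack. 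The marginal computation $\tfrac{1}{\widehat n}\sum_{u\in\widehat V}(d_u)_k = \rho_{n,k}/\widehat\lambda_n$ is right (using $\deg_H(v)\ge 2$ so that the sum over $\widehat V$ equals the sum over $[n]$), and the reduction of the injective sum to $\widehat V$ is where the minimum-degree hypothesis enters. This is a nice simplification, and it would in particular remove the $e^{\xi_n}$ term in the bound $\E{X_{n,k}}\le(\nu_n e^{\xi_n})^k/2k$ used in Lemma~\ref{lem:tight_cycles}.

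Your ``elementary alternative,'' however, does not close. Dropping injectivity gives the correction $\widehat n^h/(\widehat n)_h=\prod_{j=0}^{h-1}\widehat n/(\widehat n-j)$, which is a product of $h$ factors and cannot be bounded by $e^{h\xi_n}$ with $\xi_n=o(1)$ uniformly in $h$: for $h$ of order $\widehat n$ this ratio is $e^{(1-o(1))\widehat n}$, so any valid $\xi_n$ would have to be bounded away from $0$. The appendix estimate Lemma~\ref{lem:aux_ineq2}(ii) controls $\tfrac{N^{\Delta}(N-a)!}{(N+\Delta-a)!}=\prod_{j=1}^{\Delta}\tfrac{N}{N-a+j}$, a product of only $\Delta_n$ factors, which is precisely why the paper first needs Lemma~\ref{lem:aux_ineq} to collapse the $h$-fold product down to at most $\min(\Delta_n,h+1)-1$ correction factors before Stirling is invoked. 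So the alternative would in fact have to re-import the paper's Lemma~\ref{lem:aux_ineq} rather than bypass it; it is not an independent elementary route. Since your primary NA argument is self-contained and correct, this is a soundness issue only for the alternative you sketch, not for your proof as a whole.
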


{\begin{remark*}
    It is significantly easier to prove~\eqref{eq:bound_Xi} if we replace $\Xi_n(H)$ by
\begin{equation}\label{eq:def_Xi_easier}
\Xi'_n(H) \coloneqq  
\frac{ n^h }{\authe(H) \prod_{i=1}^{\ell} (m_n-2i +1)}\prod_{i\geq 0} \rho_{n,i}^{h_i},
\end{equation}
However, the stronger bound obtained with $\Xi_n(H)$ will be needed to show the tightness of the random variable counting the number of cycles in the configuration model.
\end{remark*}}

\begin{proof}

We estimate the number of possible sub-configurations $H^\prime$ isomorphic to $H$.
Fix a labelling $v_1,\dots, v_h$ of $V(H)$.
In order to choose $H^\prime$, we begin by picking the vertices
$v^\prime_1,\dots, v^\prime_h$ forming $V(H^\prime)$, 
each one labeled after a vertex in 
$H$.

In order to completely determine $H^\prime$, we need to pick
a list of $\deg(v_i)$ half-edges incident to $v_i^\prime$ for each $i \in [h]$. This yields a total of $\prod_{i\in [h]} (a_i)_{\deg(v_i)}$ choices of half-edges for $H^\prime$,
where $a_i= d_{v^\prime_i}$.
Note that this is $0$ unless $d_{v^\prime_i}(n)\geq \deg(v_i)$ for all $i\in [h]$.
There are exactly $\authe(H)$ ways of choosing vertices and half-edges that yield the same sub-configuration $H^\prime$. Hence, the total number of possible sub-configurations of $\Conf_n(\bd_n)$ isomorphic to $H$ is given by
\[
\frac{1}{\authe(H)}
\sum_{a_1,\dots, a_h \in \NN} 
\sum_{\substack{ \{v^\prime_1, \dots, v^\prime_h \}\in \binom{[n]}{h}\\
d_{v^\prime_i} = a_i,\,  i \in [h]
}} \prod_{i \in [h]}  (a_i)_{\deg(v_i)}.
\]

In the sum, we first pick the degrees $a_1,\dots, a_h$ of
$v^\prime_1,\dots, v^\prime_h$ before choosing the vertices themselves.

Given any choice of $a_1,\dots, a_h$,
we define $b_1,\dots, b_k$ as the different numbers 
appearing in $a_1, \dots, a_h$, in increasing order. Observe that $k\leq \Delta_n$.  \par 

Given $i\in [k]$ define $c_i$ as
the number of indices $j\in [h]$ such that $a_j=b_i$. 
Then 
\begin{equation}
\label{eqn:expected_copies_upper_main}
\sum_{\substack{ \{v^\prime_1, \dots, v^\prime_h \}\in \binom{[n]}{h}\\
d_{v^\prime_i} = a_i,  i \in [h]
}} \prod_{i\in [h]}  (a_i)_{\deg(v_i)}= 
\left(\prod_{i\in [k]}  \prod_{0\leq j < c_i} (n_{b_i} - j) \right)
\prod_{i\in[h]}  (a_i)_{\deg(v_i)}.
\end{equation}
We going to apply two technical lemmas, whose statement and proof can be found in \Cref{app:1}. These will allow us to replace the expression inside the parenthesis on the {\sc RHS} of \eqref{eqn:expected_copies_upper_main} by something more convenient. Let $\alpha_i=n_{b_i}$ and $\beta_i=c_i$ for all $i\in [k]$ with $\alpha= \sum_{i\in [k]} \alpha_i$ and $\beta = \sum_{i\in [k]} c_i = h$. By \Cref{lem:aux_ineq},

\begin{align*}
\prod_{i\in [k]} 
\prod_{0\leq j < c_i} 
(n_{b_i} - j) & \leq 
(\alpha)_h 
\left(
\prod_{i\in [h]} 
\frac{n_{a_i}}{\alpha}
\right)
\left( 
\prod_{h - k + 1 \leq j < h}\frac{\alpha}{\alpha-j}
\right) \\
&  \leq 
(\widehat{n})_h 
\left(
\prod_{i\in [h]} 
\frac{n_{a_i}}{\widehat{n}}
\right)
\left( 
\prod_{h + 1 - t \leq j < h}\frac{\widehat{n}}{\widehat{n}-j}
\right),
\end{align*}
where $t=\min (\Delta_n,h+1)$. For the second inequality we used that $\alpha\leq \widehat{n}$ (since $H$ has minimum degree $2$) and that $k\leq t$. Applying \Cref{lem:aux_ineq2} with $N=\widehat{n}$ and $a=h$, we obtain
\begin{align*}
 \prod_{i\in [k]} 
\prod_{0\leq j < c_i} 
(n_{b_i} - j)   
& \leq 
(\widehat{n})_h 
\left(
\prod_{i\in [h]} 
\frac{n_{a_i}}{\widehat{n}}
\right)
e^{h\xi_n} =
(\widehat{n})_h \widehat{\lambda}_n^{-h}
\left(
\prod_{i\in [h]} 
\frac{n_{a_i}}{n}
\right)
e^{h\xi_n}, 
\end{align*}
for some sequence $\xi_n=o(1)$ that only depends  on $n$ and $\Delta_n$.

Replacing this last inequality into~\eqref{eqn:expected_copies_upper_main} we get
\[
\sum_{\substack{ \{v^\prime_1, \dots, v^\prime_h \}\in \binom{[n]}{h}\\
d_{v^\prime_i} = a_i,  i \in [h]
}} \prod_{i\in [h]}  (a_i)_{\deg(v_i)} \leq 
(\widehat{n})_h \widehat{\lambda}_n^{-h}
\left(
\prod_{i\in [h]} \frac{n_{a_i}}{n}  (a_i)_{\deg(v_i)}\right)  e^{\xi_n h}.
\]
Summing over all possible choices of $a_1,\dots, a_h$, the number of possible sub-configurations of $\Conf_n(\bd_n)$ isomorphic to $H$ is bounded from above by
\[
\frac{(\widehat{n})_h }{\authe(H)\widehat{\lambda}_n^h}
\left(
\prod_{i\in [h]} \mmnt_{n,\deg(v_i)} \right) e^{\xi_n h}.
\]
Finally, the probability that a given copy of $H$ is realized is $1/\prod_{i\in [\ell]} (m_n-2i +1)$.
We conclude that
\begin{align*}
\E{X_{n}(H)} & \leq \frac{(\widehat{n})_h}{\authe(H)\widehat{\lambda}_n^h \prod_{i\in [\ell]} (m_n-2i +1)}
\left(
\prod_{i\in [h]} \mmnt_{n,\deg(v_i)} \right) e^{\xi_n h} &  \\
& = \Xi_n(H) e^{\xi_n h}. &
\end{align*}
\end{proof}

\begin{lemma}
    \label{lem:expected_copies}
    Let $H$ be a multigraph. Using the notation of \Cref{lem:expected_copies_upper}, it holds that 
    \[
    \E{X_{n}(H)} = \left(1 + O(1/n)\right)\frac{n^{h}}{m_n^{\ell}}\prod_{i\geq 0} \mmnt_{n,i}^{h_i}.
    \]
\end{lemma}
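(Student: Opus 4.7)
My plan is to refine the counting argument used in the proof of \Cref{lem:expected_copies_upper} so as to obtain matching asymptotic upper and lower bounds, rather than merely the one-sided bound derived there. I start from the exact identity, implicit in that proof,
\[
\E{X_n(H)} \;=\; \frac{1}{\authe(H)\prod_{i=1}^{\ell}(m_n-2i+1)}\sum_{a_1,\dots,a_h}\Biggl(\prod_{k\geq 0}(n_{n,k})_{c_k}\Biggr)\prod_{i=1}^{h}(a_i)_{\deg(v_i)},
\]
where the outer sum runs over tuples of possible degrees, the $c_k$ record the multiplicities of each value $k$ among the $a_i$, and $\prod_{i=1}^{\ell}(m_n-2i+1)^{-1}$ is the probability that a specified set of $\ell$ half-edge pairs is part of the matching. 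The goal is to evaluate each factor asymptotically up to a $(1+O(1/n))$ multiplicative error.

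The core step is to show that the falling factorials can be replaced by pure powers with a multiplicative error of $1+O(1/n)$, uniformly over $(a_1,\dots,a_h)$. Since $h$ is a constant depending only on $H$, every multiplicity satisfies $c_k\leq h$, and hence $(n_{n,k})_{c_k}=n_{n,k}^{c_k}\bigl(1+O(1/n_{n,k})\bigr)$ whenever $n_{n,k}\geq h$. \Cref{assump:main}.(ii) and~(iv) together guarantee that for every $k$ with $\lambda_k>0$ we have $n_{n,k}/n\to\lambda_k>0$, so that $1/n_{n,k}=O(1/n)$, while for $k$ with $\lambda_k=0$ we have $n_{n,k}=0$ and the corresponding terms contribute $0$ on both sides. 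Consequently $\prod_{k}(n_{n,k})_{c_k}=(1+O(1/n))\prod_{k}n_{n,k}^{c_k}$ uniformly in $(a_1,\dots,a_h)$. Analogously, $\prod_{i=1}^{\ell}(m_n-2i+1)=(1+O(1/n))\,m_n^{\ell}$ since $\ell$ is constant and $m_n=\Theta(n)$.

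After these replacements the double sum factorises cleanly,
\[
\sum_{a_1,\dots,a_h}\prod_{i=1}^{h}n_{n,a_i}(a_i)_{\deg(v_i)}\;=\;\prod_{i=1}^{h}\Biggl(\sum_{a\geq 0}n_{n,a}\,(a)_{\deg(v_i)}\Biggr)\;=\;\prod_{i=1}^{h}n\,\mmnt_{n,\deg(v_i)}\;=\;n^{h}\prod_{k\geq 0}\mmnt_{n,k}^{h_k},
\]
and combining the three estimates yields the claimed asymptotic. I expect the main subtlety to be the uniformity of the $O(1/n)$ correction across all tuples $(a_1,\dots,a_h)$, because a priori the sum ranges over degrees up to $\Delta_n$ which may grow with $n$. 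This is exactly the point at which \Cref{assump:main}.(iv) is essential: without it, a rare-but-nonzero degree class would make $1/n_{n,k}$ much larger than $O(1/n)$, breaking the uniform estimate; with it, any $k$ contributing to the sum satisfies $n_{n,k}=\Theta(n)$.
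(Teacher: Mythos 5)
Your approach is essentially the same as the paper's: both start from the exact expression for $\E{X_n(H)}$ inherited from the proof of \Cref{lem:expected_copies_upper}, replace each falling factorial $(n_{n,k})_{c_k}$ by the power $n_{n,k}^{c_k}$ up to a multiplicative $1+O(1/n)$ correction (the paper phrases this as the two-sided sandwich $\prod_i(n_{a_i}-h)\le\text{count}\le\prod_i n_{a_i}$), approximate the matching-probability denominator by $m_n^{\ell}$, and factorise the resulting sum over degree tuples.

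Two remarks on the details. First, the uniformity issue that you flag at the end is genuine but your resolution does not quite close it: Assumptions~(ii) and~(iv) give $n_{n,k}/n\to\lambda_k>0$ only \emph{pointwise} in $k$, so the constant hidden in $1/n_{n,k}=O(1/n)$ is of order $1/\lambda_k$ and can be arbitrarily large as $k$ varies; a uniform bound over the tuples $(a_1,\dots,a_h)$ therefore does not follow ``consequently'' from the pointwise statement. To make this airtight one has to argue that the aggregate contribution from tuples involving a rare degree class is itself negligible in the sum, not just that each term individually has small relative error. That said, the paper's own proof is silent on exactly this point (the pointwise inequalities are followed immediately by ``Hence, we obtain\ldots''), so you are at the same level of rigour as the source. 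Second, both your derivation and the paper's proof produce a factor $1/\authe(H)$; the lemma statement omits it (compare with \eqref{eq:expect}, where the $\authe(H)$ reappears), so the displayed asymptotic should read $\E{X_n(H)}=(1+O(1/n))\tfrac{n^h}{\authe(H)\,m_n^\ell}\prod_{i\ge 0}\mmnt_{n,i}^{h_i}$.
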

\begin{proof}
Recall that we can write
\begin{equation*}
\E{X_{n}(H)}=\frac{1}{\authe(H)\prod_{i\in [\ell]} (m_n-2i +1)}
\sum_{a_1,\dots a_h \in \NN} 
\sum_{\substack{ \{v^\prime_1, \dots, v^\prime_h \}\in \binom{[n]}{h}\\
d_{v^\prime_i} = a_i,  i \in [h]
}} \prod_{i\in[h]}  (a_i)_{\deg(v_i)}.
\end{equation*}
It holds that
\begin{align*}
\prod_{i\in[h]}  (n_{a_i} - h) (a_i)_{\deg(v_i)}
 & \leq 
 \!\!\!
\sum_{\substack{ \{v^\prime_1, \dots, v^\prime_h \}\in \binom{[n]}{h}\\
d_{v^\prime_i} = a_i,  i \in [h]
}} \prod_{i\in[h]}   (a_i)_{\deg(v_i)}
 \leq 
\prod_{i\in[h]}  n_{a_i} (a_i)_{\deg(v_i)}.
\end{align*}
Hence, we obtain the desired result
\begin{align*}
\E{X_{n}(H)} & =
\left(1 + O\left(1/n\right)\right)
\frac{1}{\authe(H) m_n^\ell} \sum_{a_1,\dots ,a_h \in \NN} 
\prod_{i\in[h]}  n_{a_i} (a_i)_{\deg(v_i)}\\
& =
\left(1 + O\left(1/n\right)\right)
\frac{n^h}{\authe(H) m_n^\ell} 
\prod_{i\in[h]}  \sum_{a_i\in \NN}  \frac{n_{a_i}}{n} (a_i)_{\deg(v_i)}\\
& = \left(1 + O\left(1/n\right)\right)
\frac{n^h}{\authe(H) m_n^\ell} 
\prod_{i\in[h]}  \mmnt_{n,\deg(v_i)}^{h_i}.\qedhere
\end{align*}
\end{proof}

From the previous lemma, we recover a classic result on random graphs.
\begin{lemma}
\label{thm:cycle_distribution}
Let $X_{n,k}$ be number of $k$-cycles in $\Conf_n(\bd_n)$.
Then, for any finite collection 
$k_1, \dots, k_l$, the
variables $X_{n,k_1},\dots,
X_{n,k_l}$ converge in distribution to independent Poisson variables whose respective means are $\xi_{k_i}:=
\nu^{k_i}/2k_i$. In particular, 
\begin{equation}
\label{eq:prob_being_simple}
\lim_{n\to \infty} \Pr(\Conf_n(\bd_n)\text{ is simple})= e^{-\frac{\nu}{2} - \frac{\nu^2}{4}}.
\end{equation}
\end{lemma}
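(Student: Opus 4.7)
The plan is to apply the method of moments for Poisson convergence (\Cref{thm:moments_method}) to the random vector $(X_{n,k_1},\ldots,X_{n,k_l})$ with target parameters $\xi_{k_i}=\nu^{k_i}/(2k_i)$. Concretely, I need to verify
\[
\lim_{n\to\infty} \E{\prod_{i=1}^{l}\binom{X_{n,k_i}}{a_i}} = \prod_{i=1}^{l}\frac{\xi_{k_i}^{a_i}}{a_i!} \qquad \text{for all } a_1,\ldots,a_l\in\NN.
\]

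The key combinatorial identification is that $\prod_i \binom{X_{n,k_i}}{a_i}$ equals the number of unordered tuples $(S_1,\ldots,S_l)$ in which each $S_i$ is a set of $a_i$ distinct $k_i$-cycles of $\Conf_n(\bd_n)$. I would split this count by whether all selected cycles are pairwise vertex-disjoint. The vertex-disjoint tuples are in bijection with the sub-multigraphs of $\Conf_n(\bd_n)$ isomorphic to $H := \bigsqcup_i a_i\, C_{k_i}$; this multigraph has $h = \sum_i a_i k_i$ vertices (all of degree $2$), $\ell = h$ edges, and a direct computation gives $\authe(H) = \prod_i (2k_i)^{a_i}\, a_i!$. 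Applying \Cref{lem:expected_copies} to $H$ yields
\[
\E{X_n(H)} = (1+O(1/n)) \frac{n^h\, \rho_{n,2}^{h}}{\authe(H)\, m_n^{h}} = (1+o(1)) \prod_i \frac{1}{a_i!}\left(\frac{\nu_n^{k_i}}{2k_i}\right)^{a_i},
\]
using $m_n = n\rho_{n,1}$ and $\nu_n \to \nu$. The right-hand side tends to $\prod_i \xi_{k_i}^{a_i}/a_i!$, as desired.

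It remains to argue that tuples containing at least one pair of non vertex-disjoint cycles contribute $o(1)$ to the factorial moment. For any such tuple the union of the selected cycles forms a multigraph $H'$ with $|V(H')| \leq h - 1$, $|E(H')| \geq \ell$, and minimum degree at least $2$; moreover, there are only finitely many isomorphism types of $H'$ for fixed $(a_i, k_i)$. Applying \Cref{lem:expected_copies_upper} to each such $H'$ gives an expected count of order $O(1/n)$ relative to the main term (the loss of at least one vertex at an overlap costs a factor of $n$ in the ratio $n^{h'}/m_n^{\ell'}$), and summing over the finitely many patterns shows that the overall overlapping contribution is $o(1)$. Combined with the previous step this establishes the factorial moment limit, and then \Cref{thm:moments_method} yields the joint convergence of $(X_{n,k_1},\ldots,X_{n,k_l})$ to independent Poisson variables with parameters $\xi_{k_i}$.

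For~\eqref{eq:prob_being_simple}, note that $\Conf_n(\bd_n)$ is simple exactly when $X_{n,1} = X_{n,2} = 0$. Convergence in distribution of integer-valued variables implies pointwise convergence of their mass functions, so the joint convergence of $(X_{n,1}, X_{n,2})$ to independent Poissons with means $\nu/2$ and $\nu^2/4$ gives
\[
\lim_{n\to\infty}\Pr(\Conf_n(\bd_n)\text{ is simple}) = e^{-\nu/2}\cdot e^{-\nu^2/4} = e^{-\nu/2-\nu^2/4}.
\]
The main obstacle I anticipate is a clean treatment of the enumeration of overlap patterns in the second step; here the strength of \Cref{lem:expected_copies_upper}, which provides an explicit upper bound valid for each overlap multigraph, is exactly what is needed to conclude that every such pattern contributes strictly less than $O(1)$.
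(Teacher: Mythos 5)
Your overall route is the same as the paper's: apply the method of moments, identify the main term with vertex-disjoint tuples of cycles (copies of $H_* = \bigsqcup_i a_i\,C_{k_i}$), and show the overlapping tuples contribute $o(1)$. The main-term computation, including $\authe(H_*) = \prod_i a_i!(2k_i)^{a_i}$ and the resulting limit $\prod_i \xi_{k_i}^{a_i}/a_i!$, is correct. However, the treatment of the overlapping case contains two genuine errors.

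First, the inequality $|E(H')| \geq \ell$ is false in general: distinct cycles in a multigraph can share edges (two triangles with a common edge give $h'=4$, $\ell'=5 < 6 = \ell$), so the inference to $\ex(H') \geq 1$ via vertex and edge counts does not go through. The correct way to get positive excess is the paper's: $H'$ has minimum degree $\geq 2$, and since two overlapping distinct cycles force some vertex of degree $\geq 3$, one has $2\ex(H') = \sum_{i\geq 3} h'_i(i-2) > 0$. Second, the assertion that each overlap pattern costs a factor ``$O(1/n)$ relative to the main term'' is too strong and would not be what \Cref{lem:expected_copies} or \Cref{lem:expected_copies_upper} give you. Losing a vertex at an overlap also raises the degree there to at least $3$, which replaces $\rho_{n,2}^{2}$ by a higher factorial moment such as $\rho_{n,4}$, and from $\Delta_n = o(\sqrt n)$ one only gets $\rho_{n,2j} = o(n^{j-1}\rho_{n,2})$. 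The factor $n^{-\ex(H')}$ from the vertex/edge bookkeeping is thus nearly, but not entirely, cancelled: the net bound is $o(1)$, not $O(1/n)$. Your conclusion is right, but the quantitative reasoning would not establish it; you need to keep the higher factorial moments $\rho_{n,i}$ in the estimate and use the strict little-$o$ coming from $\Delta_n = o(\sqrt n)$, exactly as in the paper's display \eqref{eq:excess}. A minor point: the degenerate case $\nu = 0$ needs a separate remark, since \Cref{thm:moments_method} as stated assumes strictly positive Poisson parameters; the paper dismisses it using Assumption \ref{assump:main}(iv).
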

\begin{proof}
We prove the first part of the statement, the second part follows easily from it.  
We assume $\mmnt_2>0$. Otherwise, by \Cref{assump:main}.(iv), all
vertices have degree $0$ or $1$ and the result follows trivially. 
By the method of moments (\Cref{thm:moments_method}), it suffices to show that for any 
$a_1,\dots, a_l\in \NN$,
\begin{equation}
\label{eq:mom}
\lim_{n\to \infty}
\E{
\prod_{i=1}^l \binom{X_{n,k_i}}{a_i}
} =
\prod_{i=1}^l \frac{\xi_{k_i}^{a_i}}{a_i!}.
\end{equation}
We say that a multigraph $G$ is a \defin{non-degenerate} union of unlabeled multigraphs $H_1,\dots, H_t$ if
$G$ contains a copy $H^\prime_i$ of $H_i$ for each $i\in [t]$,
$V(G)=V(H^\prime_1)\cup \dots \cup V(H^\prime_t)$ (note that this union is not necessarily disjoint),
and the $H_i$ are pairwise different  (note that the $H_i'$ can share some edges). Let $\Hcal$
be the class of all unlabeled multigraphs
that are non-degenerate unions 
of $a_1$ copies of $C_{k_1}$,
$a_2$ copies of $C_{k_2}$, and so on. Let $H_*\in \Hcal$ be the graph formed by the disjoint union of the corresponding cycles.
Note that $\ex(H_*)=0$ and $\ex(H)>0$ for all other $H\in \Hcal$.
Given $H\in \Hcal$, let $Y_{n}(H)$ be the 
number of $H$-copies in $\Conf_n(\bd_n)$. The {\sc LHS} in \eqref{eq:mom} amounts to
$\sum_{H\in \Hcal} \frac{\authe(H)}{\authe(H_*)}\E{Y_{n}(H)}$. 
We show that asymptotically only
$\E{Y_n(H_*)}$ contributes to the value of this
sum, and this expectation has the desired value.
Using 
\Cref{lem:expected_copies} we get
\begin{equation}
\label{eq:expect}
\E{Y_{n}(H)}
= (1 + O(1/n)) \frac{n^{h} \, \prod_{i\in \NN}
(\mmnt_{n,i})^{h_i}
}{
(m_n)^{\ell}\, \authe(H)}=
O\left(
n^{-\ex(H)}
\prod_{i\geq 0} \mmnt_{n,i}^{h_i}
\right),
\end{equation}
where $h=|V(H)|, \ell=|E(H)|$, and
$h_i= | \{ 
v\in V(H):\deg(v)=i \} |$. \par
Let
$H\in \Hcal$ be an arbitrary multigraph different from $H_*$. 
We show that $\E{Y_{n}(H)}=o(1)$.
By \Cref{assump:main},
$\Delta_n=o(n^{1/2})$ and $\mmnt_{n,2}=O(1)$. The former implies that $\mmnt_{n,i}=o(n^{1/2} \mmnt_{n,i-1})$
for all $i\geq 2$, and then $\mmnt_{n,i}=o(n^{(i-2)/2})$ for all $i\geq 3$.
As $H\neq H_{*}$, it contains some vertex of degree at least $3$ and we obtain
\begin{equation}\label{eq:excess}
\prod_{i\geq 0} \mmnt_{n,i}^{h_i}= o\left( 
\prod_{i\geq 3} n^{h_i(i-2)/2}
\right) = o\left( n^{\ex(H)}
\right) ,
\end{equation}
where we used that $\sum_{i\geq 3} h_i(i-2) = \sum_{i\geq 0} h_i(i-2) = 2\ex(H)$, as $H$ has minimum degree at least $2$. Plugging this last equation into \eqref{eq:expect}, we obtain that $ \E{Y_{n}(H)}=o(1)$.

Now consider the case $H=H_*$. Since $H_*$ is $2$-regular and $\authe(H_*)=
\prod_{i=1}^l a_i!(2k_i)^{a_i}$, \Cref{lem:expected_copies}
yields
\begin{equation}\label{eq:H*}
\E{Y_{n}(H_*)}= \prod_{i=1}^l
\frac{\xi_{k_i}^{a_i}}{a_i!}  + o(1),
\end{equation}
using that $m_n/n=\mmnt_{n,1}$. Combining \eqref{eq:expect}--\eqref{eq:H*}, we obtain the first part of the lemma
\[
\lim_{n\to \infty}
\E{
\prod_{i=1}^l \binom{X_{n,k_i}}{a_i}
} = 
\lim_{n\to \infty} \E{Y_{n}(H_*)}=
\prod_{i=1}^l \frac{\xi_{k_i}^{a_i}}{a_i!}.
\]
\end{proof}

Once we have determined the distribution of short cycles, we proceed to study the probability of acyclicity. Recall that $\nu=1$ is the threshold for the existence of a giant component. If $\nu\geq 1$, the largest component w.h.p. contains an unbounded number of cycles and $p_\acyc(\bd)=0$. Thus we restrict to the subcritical case $\nu<1$.

\begin{lemma}
\label{lem:tight_cycles}
    Fix $k\in \NN$. Let $X_{n,k}$ count the number of $k$-cycles in $\Conf_n(\bd_n)$. Assume that $\nu<1$. Then the
    sequence $(\E{X_{n,k}})_{n\in \NN}$ is tight.
\end{lemma}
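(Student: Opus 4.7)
The plan is to apply \Cref{lem:expected_copies_upper} with $H=C_k$ to obtain an upper bound on $\E{X_{n,k}}$ that is summable in $k$ uniformly in $n$, which yields tightness of the sequence of functions $k\mapsto \E{X_{n,k}}$ in the sense of \Cref{def:uniform_sum}. With $|V(C_k)|=k$, $|E(C_k)|=k$, all vertex degrees equal to $2$, and $\authe(C_k)=2k$, simplifying using $\nu_n=\rho_{n,2}/\rho_{n,1}$ and $m_n=n\rho_{n,1}$ gives
\[
\E{X_{n,k}} \;\leq\; \frac{\nu_n^k}{2k}\, R_n(k)\, e^{k\xi_n}, \qquad R_n(k)\,:=\,\prod_{i=0}^{k-1}\frac{1-i/\widehat{n}}{1-(2i+1)/m_n}.
\]

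Next I would derive a uniform bound on $R_n(k)$. The key observation is $m_n\geq 2\widehat{n}$, since every vertex of degree at least $2$ contributes at least $2$ to $m_n$. This gives $1-(2i+1)/m_n\geq 1-(2i+1)/(2\widehat{n})$, so each factor of $R_n(k)$ is at most $1+1/(2\widehat{n}-2i-1)$. Comparing $\sum_{i=0}^{k-1} 1/(2\widehat{n}-2i-1)$ with an integral yields a bound of the form $R_n(k)=O\bigl(\sqrt{\widehat{n}/(\widehat{n}-k+1)}\bigr)$. In particular $R_n(k)=O(1)$ for $k\leq \widehat{n}/2$, and $R_n(k)=O(\sqrt{\widehat{n}})$ in the worst case.

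To conclude, since $\nu_n\to \nu<1$ and $\xi_n=o(1)$, fix $\rho\in(\nu,1)$ and $n_0$ so that $\nu_n e^{\xi_n}\leq \rho$ for $n\geq n_0$. For such $n$, split $\sum_{k>K}\E{X_{n,k}}$ at $k=\widehat{n}/2$. The range $K<k\leq \widehat{n}/2$ is dominated by $O(1)\cdot \sum_{k>K}\rho^k/k=O(\rho^K/K)$, uniformly small as $K\to \infty$. The range $\widehat{n}/2<k\leq \widehat{n}$ carries a global factor at most $\rho^{\widehat{n}/2}$ times a polynomial in $\widehat{n}$, so it is $o_n(1)$. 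For the finitely many $n<n_0$ the tail vanishes once $K\geq \widehat{n}$, so enlarging $K$ if necessary handles these cases; this gives tightness.

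The main obstacle is controlling $R_n(k)$ in the regime where $k$ is comparable to $\widehat{n}$: here one must exploit the $(\widehat{n})_k$ factor in $\Xi_n(C_k)$ (rather than the cruder $n^k$ appearing in $\Xi'_n(C_k)$, as pointed out in the Remark after \Cref{lem:expected_copies_upper}), since it captures the vanishing number of ways to pick $k$ distinct vertices of degree at least $2$ as $k$ approaches $\widehat{n}$.
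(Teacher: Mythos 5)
Your proof is correct and takes essentially the same route as the paper: apply \Cref{lem:expected_copies_upper} with $H=C_k$, simplify to isolate $\nu_n^k/(2k)$ times a correction factor $R_n(k)$, and control $R_n(k)$ using the fact that vertices of degree $\geq 2$ force $m_n\geq 2\widehat{n}$. The difference is in how the correction factor is bounded. The paper makes a sharper observation: under $\nu<1$ one may assume (after discarding isolated vertices) $\lambda_1>0$, whence $m_n\geq 2\widehat{n}+\lambda_{n,1}n$ and so $m_n-1\geq 2\widehat{n}$ for large $n$; this gives the monotonicity $\frac{\widehat{n}-s}{m_n-2s-1}\leq \frac{\widehat{n}}{m_n-1}$ for \emph{every} $s\geq 0$, so that each factor of $R_n(k)$ is at most $\frac{m_n}{m_n-1}$ and hence $R_n(k)\leq \left(\frac{m_n}{m_n-1}\right)^k$, which for $k\leq \widehat{n}$ is $O(1)$ and can simply be absorbed into $e^{k\xi_n}$. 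This yields $\E{X_{n,k}}\leq (\nu_n e^{\xi_n})^k/(2k)\leq (\nu')^k/(2k)$ for all $k$ and all large $n$ in one stroke, with no range-splitting. You instead bound each factor by $1+\frac{1}{2\widehat{n}-2i-1}$ (which overestimates the factors when $i$ is comparable to $\widehat{n}$), leading to $R_n(k)=O(\sqrt{\widehat{n}/(\widehat{n}-k+1)})$ and the need to treat the ranges $k\leq\widehat{n}/2$ and $k>\widehat{n}/2$ separately. Your argument does go through — the $\rho^{\widehat{n}/2}$ factor kills the extraneous $\sqrt{\widehat{n}}$ uniformly over $n$ once $K$ is chosen large — but it is more laborious, and the uniformity over $n$ in the second regime is a subtlety you state rather than verify. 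The monotonicity shortcut is what the paper's case analysis on $\lambda_0,\lambda_1$ exists to set up, and it is worth knowing.
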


\begin{proof}
    Clearly, adding or removing isolated
vertices to $\Conf_n(\bd_n)$ does not affect the result, so without loss of generality we may assume $\lambda_0=0$. If $\lambda_1=1$, by \Cref{assump:main}.(iv) all vertices have degree $1$ and the result follows trivially as there are no cycles in the model. Also, note that $\lambda_1\neq 0$, as this together with $\lambda_0=0$ would imply that $\nu\geq 1$.  So we assume that $0<\lambda_1<1$.
From \Cref{lem:expected_copies_upper} it 
follows that
\begin{align}\label{eq:SDJSO}
\E{X_{n,k}} \leq \; &
\frac{(\widehat{n})_k }{2k \widehat{\lambda}_n^k \prod_{i=1}^{k} 
(m_n - 2i +1)} \mmnt_{n,2}^{k} e^{k \xi_n }
\end{align}
for all $n,k\in \NN$, where the sequence $\xi_n =o(1)$ depends only on $n$. Observe that $m_n\geq 2\widehat{n} + \lambda_{n,1} n$, so 
$m_n - 1 \geq 2\widehat{n}$ for sufficiently large $n$. This implies that 
$(\widehat{n}-s)/(m_n-2s-1)\leq \widehat{n}/m_n= \widehat{\lambda}_n/\mmnt_{n,1}$ for all $0\leq s < n$. Thus, for sufficiently large $n$, 
\[
\E{X_{n,k}} \leq \frac{(\nu_n e^{\xi_n })^{k}}{2k}.
\]
Choose $\nu^\prime\in (\nu,1)$.
As $\Ln \nu_n e^{\xi_n }=\nu$, 
there is some value $n^\prime\in \NN$ such that $\nu_n e^{\xi_n }<\nu^\prime$
for all $n\geq n^\prime$. Then
\[
\E{X_{n,k}} \leq 
\frac{(\nu^\prime)^k }{2k}
\]
for all $n\geq n^\prime$. 
As the sum $\sum_{k\geq 1} \frac{(\nu^\prime)^k}{2k}$ converges,
this proves the result.
\end{proof}

\begin{corollary}
\label{lem:cycle_count_conf}
Let $Z_n$ count the cycles in $\Conf_n(\bd_n)$. Assume that $\nu<1$. Then,
\begin{equation}
\lim_{n\to\infty} \E{Z_n}= -\frac{1}{2} \, \ln{(1-\nu)}.
\end{equation}
\end{corollary}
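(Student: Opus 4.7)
The plan is to write $Z_n = \sum_{k \geq 1} X_{n,k}$, compute the limit of $\E{X_{n,k}}$ for each fixed $k$, and then justify exchanging limit and sum via \Cref{lem:tight} with the tightness bound furnished essentially by (the proof of) \Cref{lem:tight_cycles}. The target identity is then the Taylor expansion $-\tfrac{1}{2}\ln(1-\nu) = \sum_{k\geq 1} \nu^k/(2k)$, which converges because $\nu < 1$.

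For the pointwise limit, I apply \Cref{lem:expected_copies} to $H = C_k$. Here $h = |V(C_k)| = k$, $\ell = |E(C_k)| = k$, $h_2 = k$ while $h_i = 0$ for $i\neq 2$, and one checks that $\authe(C_k) = 2k$ for every $k\geq 1$ (including the loop $k=1$ and the double edge $k=2$). The lemma gives
\[
\E{X_{n,k}} = \bigl(1 + O(1/n)\bigr)\frac{n^{k} \mmnt_{n,2}^{k}}{m_n^{k}\cdot 2k} = \bigl(1 + O(1/n)\bigr)\frac{\nu_n^{k}}{2k},
\]
using $m_n/n = \mmnt_{n,1}$ and $\nu_n = \mmnt_{n,2}/\mmnt_{n,1}$. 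Since $\nu_n \to \nu$ by \Cref{assump:main}, this yields $\lim_{n\to\infty} \E{X_{n,k}} = \nu^k/(2k)$ for each fixed $k\geq 1$, and $\sum_{k\geq 1}\nu^k/(2k) = -\tfrac{1}{2}\ln(1-\nu)$ is finite.

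It remains to swap the limit and the sum. Let $f_n\colon \NN\to [0,\infty)$ be defined by $f_n(k) = \E{X_{n,k}}$. The two hypotheses of \Cref{lem:tight} are the pointwise convergence just established and the finiteness of the limit sum. For tightness, I invoke the bound that underlies \Cref{lem:tight_cycles}: fixing $\nu' \in (\nu,1)$, its proof shows that there exists $n'$ such that
\[
\E{X_{n,k}} \leq \frac{(\nu')^{k}}{2k} \qquad \text{for every } k\geq 1 \text{ and every } n\geq n',
\]
where crucially the bound is uniform in $k$ (the argument only uses $k$-independent quantities). Since $\sum_{k>K} (\nu')^k/(2k)$ is an arbitrarily small tail, and since for each of the finitely many $n<n'$ the series $\sum_k f_n(k)$ is itself finite (as $X_{n,k} = 0$ for $k$ larger than $n$), one can enlarge the finite set $T\subset \NN$ to absorb every $\epsilon$-tail simultaneously. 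Hence $(f_n)_{n\in \NN}$ is tight.

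Applying \Cref{lem:tight} and linearity of expectation,
\[
\lim_{n\to\infty} \E{Z_n} = \lim_{n\to\infty}\sum_{k\geq 1} \E{X_{n,k}} = \sum_{k\geq 1} \frac{\nu^k}{2k} = -\frac{1}{2}\ln(1-\nu),
\]
which is the desired identity. The only delicate point is confirming that the uniform-in-$k$ bound in \Cref{lem:tight_cycles}'s argument really holds for all $k$ with the same $n'$; this is the main thing to double-check, but it falls out of the same estimate $(\widehat{n}-s)/(m_n - 2s - 1) \leq \widehat{\lambda}_n/\mmnt_{n,1}$ used there, which is independent of the cycle length.
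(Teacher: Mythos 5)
Your proof is correct and follows essentially the same route as the paper: pointwise limits $\E{X_{n,k}}\to \nu^k/(2k)$ via \Cref{lem:expected_copies} (the paper instead cites \Cref{thm:cycle_distribution}, whose proof uses the same lemma), tightness from \Cref{lem:tight_cycles}, and the exchange of limit and sum via \Cref{lem:tight}. Your extra remark about absorbing the finitely many indices $n<n'$ into the finite set $T$ is a legitimate (and careful) clarification of why the ``for all $n\geq n'$'' bound suffices for the definition of tightness, but it does not change the underlying argument.
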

\begin{proof}
Let $X_{n,k}$ count the number of $k$-cycles in
$\Conf_n(\bd_n)$. In \Cref{thm:cycle_distribution} we showed that $\E{X_{n,k}}=
\nu^k/2k + o(1)$. By \Cref{{lem:tight}} and \Cref{lem:tight_cycles},
\begin{equation}
\label{lem:cycle_count_conf.eqn:main}
\lim_{n\to\infty} \E{Z_n} = \lim_{n\to\infty} \sum_{k\geq 1}\E{X_{n,k}} = \sum_{k\geq 1} \lim_{n\to\infty} \E{X_{n,k}} = -\frac{1}{2} \, \ln{(1-\nu)}.
\end{equation} \par

\end{proof}

\begin{lemma}
\label{cor:acyclic}
Assume $\nu<1$. Let $\bm{a}=(a_\ell)_{\ell\in \NN}$ be a sequence of non-negative integers such that $\sum_{\ell \in \NN} a_\ell< \infty$. 
The following hold true:
\begin{itemize}

    \item[(1)]
    Let $A_n$ be the event that $\Conf_n(\bd_n)$
    contains exactly $a_\ell$ $\ell$-cycles for all $\ell\geq 1$. Then 
    \[
    \lim_{n\to \infty}\Pr(A_n)=
    \sqrt{1-\nu} \prod_{\ell\in \NN}
    \frac{(\nu^\ell/2\ell)^{a_\ell}}{a_\ell!}.
    \]
    In particular, 
    $$
    \Ln \Pr(\Conf_n(\bd_n) \text{ is acyclic}) = \sqrt{1-\nu}.
    $$
    \item[(2)]
    Let $B_n$ be the event that $\Gcal_n(\bd_n)$
    contains exactly $a_\ell$ $\ell$-cycles for all $\ell\geq 3$. Then 
    \[
    \lim_{n\to \infty}\Pr(B_n)=
    \sqrt{1-\nu} \cdot e^{\frac{\nu}{2} + \frac{\nu^2}{4}} \prod_{\ell\geq 3}
    \frac{(\nu^\ell/2\ell)^{a_\ell}}{a_\ell!}.
    \]
    In particular, 
    $$
    \Ln \Pr(\Gcal_n(\bd_n) \text{ is acyclic}) = \sqrt{1-\nu}\cdot e^{\frac{\nu}{2} + \frac{\nu^2}{4}}.
    $$
\end{itemize}
\end{lemma}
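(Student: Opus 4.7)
The plan is to deduce part~(1) from the joint Poisson convergence of \Cref{thm:cycle_distribution} combined with a uniform tail bound implicit in the proof of \Cref{lem:tight_cycles}; part~(2) will then follow immediately by conditioning on simplicity.

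For part~(1), since $\sum_\ell a_\ell < \infty$, there exists $L_0\in\NN$ such that $a_\ell = 0$ for all $\ell > L_0$. For each $L \geq L_0$, let $A_n^L$ be the event that $\Conf_n(\bd_n)$ contains exactly $a_\ell$ $\ell$-cycles for every $\ell \leq L$, with no restriction on longer cycles. Applying \Cref{thm:cycle_distribution} to the finite vector $(X_{n,1}, \ldots, X_{n,L})$ yields
\[
\lim_{n \to \infty} \Pr(A_n^L) \,=\, \prod_{\ell=1}^{L} e^{-\nu^\ell/(2\ell)} \, \frac{(\nu^\ell/(2\ell))^{a_\ell}}{a_\ell!}.
\]
Using $\sum_{\ell \geq 1} \nu^\ell/\ell = -\ln(1-\nu)$, the $L\to\infty$ limit of the right-hand side equals $\sqrt{1-\nu} \prod_{\ell} (\nu^\ell/(2\ell))^{a_\ell}/a_\ell!$, which is the target value.

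It remains to interchange the limits in $n$ and $L$. Since $A_n \subseteq A_n^L$ for every $L \geq L_0$,
\[
0 \,\leq\, \Pr(A_n^L) - \Pr(A_n) \,\leq\, \Pr\!\Bigl(\sum_{\ell > L} X_{n,\ell} \geq 1\Bigr) \,\leq\, \sum_{\ell > L} \E{X_{n,\ell}}.
\]
Inspection of the proof of \Cref{lem:tight_cycles} shows that it in fact delivers $\E{X_{n,\ell}} \leq (\nu')^\ell/(2\ell)$ \emph{uniformly} in $\ell \geq 1$ and in $n \geq n'$, for some $\nu' \in (\nu,1)$ and $n' \in \NN$ independent of $\ell$. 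Consequently $\sum_{\ell > L} \E{X_{n,\ell}}$ can be made arbitrarily small, uniformly in $n \geq n'$, by choosing $L$ large. A routine $\varepsilon/3$-argument then shows that $\lim_n \Pr(A_n)$ exists and equals the advertised product.

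For part~(2), recall that $\Gcal_n(\bd_n)$ is distributed as $\Conf_n(\bd_n)$ conditioned on the simplicity event $S_n = \{X_{n,1}=0,\, X_{n,2}=0\}$, and that the cycles of length at least $3$ of a simple configuration coincide with those of the underlying graph. Writing $A_n'$ for the $\Conf_n(\bd_n)$-event obtained from part~(1) with $a_1=a_2=0$ and the prescribed $a_\ell$ for $\ell \geq 3$, this gives $\Pr(B_n) = \Pr(A_n')/\Pr(S_n)$. Applying part~(1) to the numerator and $\lim_n \Pr(S_n) = e^{-\nu/2-\nu^2/4}$ from~\eqref{eq:prob_being_simple} to the denominator produces the claimed formula after cancellation. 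The only delicate point in the whole argument is the limit exchange in part~(1), which is handled by the uniform geometric bound extracted from \Cref{lem:tight_cycles}.
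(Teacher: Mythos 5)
Your proposal is correct and follows essentially the same route as the paper's proof: truncate to cycle lengths $\leq L$, apply the joint Poisson convergence of \Cref{thm:cycle_distribution} for the truncated event, control the truncation error uniformly in $n$ via the geometric bound from \Cref{lem:tight_cycles} plus Markov, and then deduce part~(2) by conditioning on simplicity. Your $A_n^L$ is exactly the paper's $p_{n,K}$, so the two arguments are the same up to notation.
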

\begin{proof}
We will prove (1). Statement (2) follows from the fact that $\Gcal_n(\bd_n)$ is distributed like $\Conf_n(\bd_n)$ 
conditioned on the absence of $1$-cycles and $2$-cycles, whose probability we computed in \Cref{{thm:cycle_distribution}}. 
Let $X_{n,k}$ count the number of $k$-cycles in $\Conf_n(\bd_n)$ and $\xi_k=\nu^k/2k$. 
Let $\epsilon>0$ be arbitrarily small. 
It suffices to prove that, if $n$ is large enough,
\begin{equation}
\label{eqn:acyclic_aux}
 \left\vert
\Pr(A_n) -  \sqrt{1-\nu} \prod_{\ell\in \NN}
    \frac{\xi_\ell^{a_\ell}}{a_\ell!}
\right\vert < \epsilon. 
\end{equation}
Let $K$ be a sufficiently large constant satisfying both
\begin{eqnarray}
&\left\vert
\left(\prod_{k = 1}^K
    e^{-\xi_k}\cdot \frac{\xi_k^{a_k}}{a_k!} \right)
- \left(
\sqrt{1-\nu}
\prod_{k \geq 1} \frac{\xi_k^{a_k}}{a_k!}
\right)
\right\vert < \epsilon/3 \label{eq:cond_1}
\end{eqnarray}
and
\begin{eqnarray}
&\Pr\big(\sum_{k > K} X_{n,k}>0\big)<\epsilon/3 ,\quad\text{ for all $n\in \NN$}. \label{eq:cond_2}
\end{eqnarray}

The property in \eqref{eq:cond_1} can be attained for a large $K$ because, inside the absolute value, 
for $\nu<1$, the right term contains the limit 
(as $K\to \infty$) of the left term. 
The existence of $K$ satisfying \eqref{eq:cond_2}
follows from 
$(\E{X_{n,k}})_{n \in \NN}$ being tight for all $k\in \NN$,
as shown in \Cref{lem:cycle_count_conf}, and Markov's inequality.
Indeed, by tightness, there is some $K$ for which
$\sum_{k>K}\E{X_{n,k}}< \epsilon/3$ uniformly in $n$, and then 
$\Pr\big(\sum_{k > K} X_{n,k}> 0\big)\leq \E{\sum_{k>K}X_{n,k}}\leq \epsilon/3$ uniformly in $n$ as well. 
\par

We can write $A_n= \bigcap_{k\geq 1} \{X_{n,k}=a_k\}$, and $\Pr(A_n)= \lim_{k\to \infty} p_{n,k}$,
where $p_{n,k}= \Pr(
\bigcap_{i=1}^k
\{X_{n,i}=a_i\})$.
The intersection bound and \eqref{eq:cond_2} imply
that $\Pr(A_n)> p_{n,K} -\epsilon/3$ for all $n\in \NN$.
Moreover, the sequence $(p_{n,k})_{k\geq 1}$ is monotonically decreasing, so $\Pr(A_n)\leq p_{n,K}$ for all $n\in \NN$. 
It follows that,
\begin{equation}
\lvert \Pr(A_n) - p_{n,K} \rvert < \epsilon/3
\end{equation}

However, by \Cref{thm:cycle_distribution}, for $n$ large enough
\[
\left|p_{n,K} - 
\prod_{k = 1}^K
   e^{-\xi_k}
    \frac{\xi_k^{a_k}}{a_k!}\right|
<  \epsilon/3.\]
Using \eqref{eq:cond_1} here yields \eqref{eqn:acyclic_aux} and completes the proof.
\end{proof}

We finally show that there are no complex components (i.e., components with positive excess). 

\begin{theorem}
\label{thm:complex}
Assume $\nu<1$. Then a.a.s there are no connected components containing more than one cycle in $\Conf_n(\bd_n)$.
\end{theorem}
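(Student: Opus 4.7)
I would reduce the theorem to controlling the expected count of a short list of sub-multigraphs, bound each expected count via Lemma~\ref{lem:expected_copies_upper}, and sum the bounds using the strict inequality $\nu<1$. Then Markov's inequality finishes the proof.

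\textbf{Step 1 (Minimal complex sub-multigraphs).} Any connected multigraph with minimum degree $\geq 2$ and excess $\geq 1$ contains, as a sub-multigraph, one of the following three families of excess-$1$ ``building blocks'':
\begin{itemize}
\item a \emph{theta} $\Theta(p_1,p_2,p_3)$ with $p_1,p_2,p_3\geq 1$: two vertices joined by three internally vertex-disjoint paths of lengths $p_1,p_2,p_3$;
\item a \emph{figure-eight} $\Phi(a,b)$ with $a,b\geq 1$: two cycles of lengths $a,b$ sharing a single vertex;
\item a \emph{handcuff} $\Hinge(a,b,p)$ with $a,b,p\geq 1$: two cycles of lengths $a,b$ joined by a path of length $p$.
\end{itemize}
In the multigraph setting, a cycle of length $1$ is a loop and of length $2$ a pair of parallel edges. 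The classification arises by inspecting the \emph{kernel} of the $2$-core (suppress all degree-$2$ vertices): any connected multigraph with minimum degree $\geq 3$ and excess exactly $1$ has at most two vertices, yielding three explicit shapes whose re-subdivisions are precisely the three families. Any component of $\Conf_n(\bd_n)$ with excess $\geq 1$ thus contains such a sub-multigraph, so it suffices to bound their total expected number in $\Conf_n(\bd_n)$.

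\textbf{Step 2 (Per-type expected bounds).} I apply Lemma~\ref{lem:expected_copies_upper} to each family. For a theta with $\ell=p_1+p_2+p_3$ edges (so $h=\ell-1$, $h_3=2$, $h_2=\ell-3$), a direct computation using $m_n = n\rho_{n,1}(1+o(1))$ gives
\[
\E{X_n(\Theta(p_1,p_2,p_3))} \;\leq\; \frac{C\,\rho_{n,3}^2}{n\,\rho_{n,1}^3}\cdot \nu_n^{\ell-3}\cdot e^{h\xi_n},
\]
where $\nu_n = \rho_{n,2}/\rho_{n,1}\to \nu$, $\xi_n = o(1)$ is the sequence of Lemma~\ref{lem:expected_copies_upper}, and $C=O(1)$. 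Analogous bounds, with prefactors $\rho_{n,4}/(n\rho_{n,1}^2)$ for $\Phi$ and $\rho_{n,3}^2/(n\rho_{n,1}^3)$ for $\Hinge$, hold for the other families. Since Assumption (iii) implies $\Delta_n=o(\sqrt n)$, we have $\rho_{n,3}\leq \Delta_n\rho_{n,2}$ and $\rho_{n,4}\leq \Delta_n^2\rho_{n,2}$, which, together with the boundedness of $\rho_{n,1},\rho_{n,2}$, makes all three prefactors $o(1)$.

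\textbf{Step 3 (Summation and Markov).} Fix $\nu'\in(\nu,1)$. For $n$ sufficiently large, $\nu_n e^{\xi_n}<\nu'$. Then the sum over each family's parameters becomes a convergent geometric series: for thetas,
\[
\sum_{p_1,p_2,p_3\geq 1}\nu_n^{p_1+p_2+p_3-3}\,e^{h\xi_n} \;\leq\; \frac{e^{O(1)}}{(1-\nu')^3},
\]
and similarly for $\Phi$ (double sum in $a,b$) and $\Hinge$ (triple sum in $a,b,p$). Multiplying by the $o(1)$ prefactors of Step~2, the total expected number of theta, figure-eight, or handcuff sub-multigraphs in $\Conf_n(\bd_n)$ is $o(1)$. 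Markov's inequality then gives that a.a.s.\ none of them appear, so a.a.s.\ no component of $\Conf_n(\bd_n)$ contains more than one cycle.

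\textbf{Main obstacle.} The delicate point is the uniform-in-$n$ control of the sum over the infinite parameter families: the factor $e^{h\xi_n}$ coming from Lemma~\ref{lem:expected_copies_upper} can grow with the ``size'' $h$ of the sub-multigraph, so one must exploit the strict inequality $\nu<1$ to pick $\nu'\in(\nu,1)$ and absorb $e^{\xi_n}$ into a summable geometric decay of common ratio $\nu'$. This is the essential use of subcriticality.
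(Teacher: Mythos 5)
Your proof is correct and follows essentially the same strategy as the paper's. The paper classifies the ``two cycles in one component'' witnesses into three families $H^{(1)}_{i,j,k}$ (two disjoint cycles joined by a path), $H^{(2)}_{i,j,k}$ (two cycles sharing a path), and $H^{(3)}_{i,j}$ (two cycles sharing a vertex); these coincide with your handcuff, theta, and figure-eight families respectively, so the decomposition is the same up to terminology. In both proofs the key tool is \Cref{lem:expected_copies_upper}, the prefactor is shown to be $O(\Delta_n^2/n)=o(1)$ via $\rho_{n,3}\leq\Delta_n\rho_{n,2}$ (you also use $\rho_{n,4}\leq\Delta_n^2\rho_{n,2}$ for figure-eights, whereas the paper only spells out Class~(I) and asserts the others are analogous), and the strict inequality $\nu<1$ is exploited by choosing $\nu'\in(\nu,1)$ with $\nu_n e^{\xi_n}<\nu'$ to make the sum over the infinite parameter family geometric and convergent, then Markov finishes. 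The one thing you do more explicitly than the paper is justify the classification step via the kernel of the $2$-core (an excess-$1$ kernel with minimum degree $\geq 3$ has at most two vertices, yielding exactly the three shapes); the paper treats this reduction as self-evident.
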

\begin{proof}
As in the proof of \Cref{lem:tight_cycles}, we will assume that $\lambda_0=0$ and $\lambda_1<1$, which in turn imply that $\mmnt_1>1$ and $n-i/m_n - 2i + 1 \leq 1/\mmnt_{n,1}$ for sufficiently large $n\in \NN$ for all $i\geq 1$.
\par
The configuration $\Conf_n(\bd_n)$ has two cycles lying in the same component if and only if it has some subgraph belonging to one of the following classes:
\begin{enumerate}[label=(\Roman*)]
    \item $H^{(1)}_{i,j,k}:$ An $i$-cycle
    and a $j$-cycle, disjoint, with a path of length $k\geq 1$ joining a vertex from each cycle. 
    \item $H^{(2)}_{i,j,k}:$ An $i$-cycle
    and a $j$-cycle sharing a path of length $k\geq 1$.
    \item $H^{(3)}_{i,j}:$
    An $i$-cycle and a $j$-cycle sharing a single vertex.
\end{enumerate}

We show that w.h.p. none of these subgraphs appear in $\Conf_n(\bd_n)$ using the first moment method. 
Let us consider Class-(I) first. 
Let $X^{(1)}_{n;i,j,k}$
count the number of copies of 
$H^{(1)}_{i,j,k}$
in $\Conf_n(\bd_n)$.
The multigraph $H^{(1)}_{i,j,k}$
has $i+j+k$ edges and $h=i+j+k-1$ vertices, among which two have degree $3$ and the rest have degree $2$.
Observe that if $h> \widehat{n}$ then $\E{X^{(1)}_{n;i,j,k}}= 0$. Suppose otherwise.
Choose some $\nu^\prime \in (\nu, 1)$. By \Cref{lem:expected_copies}, for sufficiently large $n$, independently of $h$
\begin{align}
\nonumber
\E{X^{(1)}_{n;i,j,k}} & \leq \; 
\frac{(\widehat{n})_{h}\widehat{\lambda}_n}{\prod_{s\in [h+1]} 
(m_n - 2s +1)} \mmnt_{n,2}^{h-2}
\mmnt_{n,3}^{2} e^{h\xi_n }  \\
\nonumber
& \leq (\nu_n)^{h}\frac{\Delta_n^2}{(m_n - 2h -1)} e^{h\xi_n } \\
 &  \leq  (\nu^\prime_n)^{h}\frac{\Delta_n^2}{(\lambda_{n,1}n -1)},
\end{align}
where in the second inequality we have used that $\mmnt_{n,3} \leq \Delta_n \mmnt_{n,2}$.
\par

Let $X^{(1)}_{n;h}$ be the sum of all variables $X^{(1)}_{n;i,j,k}$ with
$i+j+k-1=h$. There are at most $h^2$ such choices for $i,j,k$, 
so
$\E{X^{(1)}_{n;h}}= O(h^2 (\nu_n^\prime)^h \Delta_n^2/n)
$. It follows that
\[
\sum_{h\geq 3} \E{X^{(1)}_{n;h}}
= O\Big(
\frac{\Delta_n^2}{n}  
\sum_{h\geq 3}h^2 (\nu_n^\prime)^h 
\Big)=O\Big(
\frac{\Delta_n^2}{n} 
\Big) = o(1).
\]
Using Markov's inequality we obtain that w.h.p. no
Class-(I) subgraph occurs in $\Conf_n(\bd_n)$. In an analogous way, it can be shown that the expected numbers of Class-(II) and Class-(III) subgraphs in $\Conf_n(\bd_n)$ 
are $O\left(\Delta_n^2/n\right)$, and w.h.p. $\Conf_n(\bd_n)$ contains no subgraph from those subclasses either. This proves the result. 
\end{proof}

\subsection{Previous results}\label{sec:previous}

In this section we included a self-contained description of the cycle distribution on $\Conf(\bd)$ and the probability $\Conf(\bd)$ and $\Gcal(\bd)$ are acyclic. As we explained, most of these results were already known in the literature and our aim was to compile them in a single document. To conclude this section, we give references where some of these results can be found. Note that all the results that we refer to below, hold w.h.p. and under similar conditions on $\bd$ as \Cref{assump:main}, unless otherwise stated. 

The expected number of copies of a given small subgraph (\Cref{lem:expected_copies}), as well as the probability a particular cycle appears, are well-studied in the literature~\cite{bollobas1986number}. The result stated here can be found with a similar proof in the unpublished notes by Bordenave~\cite[Theorem 2.4]{Bordenave}. The refined upper bound (\Cref{lem:expected_copies_upper}), needed to show tightness (\Cref{lem:tight_cycles}), was not available in the literature, to our best knowledge.

For graphs other than trees or unicyclic ones, their expected number tends to zero as $n$ tends to infinity.   McKay~\cite{mckay2011subgraphs} studied the probability of small subgraph appearance on dense random graphs with given degrees.

That the joint distribution of cycles up to length $k$  converges to a vector of independent Poisson random variables is well known. This was originally proved  in regular setting by~Bollob\'as~\cite{bollobas1980probabilistic}, and, independently, by Wormald~\cite{wormald1981asymptotic}. The extension to the configuration model can be found in~\cite[Theorem 2.18]{Bordenave} and~\cite[Exercise 4.16]{vanderhofstadRandomGraphsComplexVolI}. 
Determining the probability that the configuration model is simple is one of earliest results in the area~\cite{benderAsymptoticNumberLabeled1978,bollobas1980probabilistic}, see \cite{janson2009probability} for the version in \Cref{thm:simple}.

In the subcritical regime, the result that all cycles have length bounded in probability (\Cref{{lem:cycle_count_conf}}), which is equivalent to the tightness exhibited in \Cref{lem:tight_cycles}, can be found in~\cite[Lemma 5.3]{dhara2017phase}. The existence of no complex components in $\Conf_n(\bd_n)$, given by \Cref{thm:complex}, can be found in~\cite{hatami2012scaling}. It is worth stressing that the proof strategy there is quite different: they analyze a process that exposes a connected component in $\Conf_n(\bd_n)$, vertex by vertex, using martingale concentration inequalities, which requires additional constraints in the maximum degree, with respect to \Cref{assump:main}.

\section{Fragment Distribution}
\label{sec:fragment}

In this section we study the distribution of unicyclic components of $\Conf_n(\bd_n)$ when $\nu<1$. To our best knowledge, this distribution has not been studied yet.

Let $\Frag^*_n=\Frag(\Conf_n(\bd_n))$ 
and $\Frag_n=\Frag(\Gcal_n(\bd_n))$ be the subgraphs composed by the union of all unicyclic connected components in $\Conf_n(\bd_n)$ and $\Gcal_n(\bd_n)$, respectively.

\begin{theorem}
\label{thm:frag_distribution}
Suppose $\nu<1$. If $H$ is a fragment, then
    \[
    \lim_{n\to \infty}
    \Pr(\Frag^*_n \simeq H) =
    \frac{\sqrt{1-\nu}}{\authe(H)}
    \prod_{i\geq 0} 
    \left(\frac{\lambda_i i!}{\mmnt_1}
    \right)^{h_i},
    \]
    where $h_i=| \{ 
    v\in V(H) : \deg(v)=i\}|$.

\end{theorem}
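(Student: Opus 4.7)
The strategy is to express $\{\Frag^*_n \simeq H\}$ as the existence of a sub-configuration of $\Conf_n(\bd_n)$ isomorphic to $H$ whose vertices exhaust all their half-edges and leave behind an acyclic residual matching. Indeed, when $\nu<1$, by \Cref{thm:complex} all components of $\Conf_n(\bd_n)$ are w.h.p.\ either trees or unicyclic, so the fragment equals the union of components containing a cycle. Hence $\Frag^*_n \simeq H$ occurs iff there is $H' \subseteq \Conf_n(\bd_n)$ with $H' \simeq H$ satisfying: (a) $d_{v'} = \deg_H(v')$ for every $v' \in V(H')$, so all half-edges of $V(H')$ are consumed by $H'$; and (b) the matching on the remaining $m_n - 2h$ half-edges is acyclic. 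Such an $H'$ must coincide with $\Frag^*_n$ and is therefore unique, so $\Pr(\Frag^*_n \simeq H) = \E{N_n(H)}$, where $N_n(H)$ counts the sub-configurations satisfying (a) and (b).

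I would compute $\E{N_n(H)}$ following the blueprint of \Cref{lem:expected_copies} but restricting to exact degree matches. Fix a labeling $v_1, \ldots, v_h$ of $V(H)$ and enumerate: for each ordered tuple $(v'_1, \ldots, v'_h) \in [n]^h$ of distinct vertices with $d_{v'_i} = \deg_H(v_i)$ (of which there are $\prod_k (n_{n,k})_{h_k}$) and each assignment of the $\deg_H(v_i)!$ half-edges of $v'_i$ to the incidences of $v_i$ in $H$ (contributing a factor $\prod_k (k!)^{h_k}$ per tuple, using $(k)_k = k!$ since degrees match exactly), the probability that the corresponding $h$ half-edge pairs appear in the uniform random matching is $1/\prod_{i=1}^h (m_n - 2i + 1)$. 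Conditional on these pairs appearing, the residual matching on the remaining half-edges is itself a configuration model $\Conf_{n-h}(\bd_n^{-H'})$ on the induced degree sequence, and property (b) is exactly its acyclicity. Dividing by $\authe(H)$ for over-counting (as in \Cref{lem:expected_copies_upper}) gives
\[
\E{N_n(H)} = \frac{1}{\authe(H)} \cdot \frac{\prod_{k \geq 0} (n_{n,k})_{h_k}(k!)^{h_k}}{\prod_{i=1}^{h}(m_n - 2i + 1)} \cdot \Pr\!\left(\Conf_{n-h}(\bd_n^{-H'}) \text{ is acyclic}\right).
\]

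Using $\ex(H) = 0$ (so $|E(H)| = h$) together with $n_{n,k}/n \to \lambda_k$ and $m_n/n \to \rho_1$, the explicit combinatorial factor tends to $\frac{1}{\authe(H)} \prod_{k \geq 0} (\lambda_k k!/\rho_1)^{h_k}$. For the residual acyclicity, since $h$ is a fixed constant the induced degree sequence $\bd_n^{-H'}$ still satisfies \Cref{assump:main} with the same limit distribution $D$ and the same parameter $\nu$, so \Cref{cor:acyclic}(1) applied to the induced configuration model yields $\Pr(\Conf_{n-h}(\bd_n^{-H'}) \text{ is acyclic}) \to \sqrt{1-\nu}$. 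The main technical obstacle is the uniformity of this last convergence over the choice of tuple $(v'_1, \ldots, v'_h)$; I would handle it by observing that the key estimates in \Cref{lem:tight_cycles} and \Cref{cor:acyclic} depend only on the tail of $D$ and on $\nu$, neither of which changes meaningfully upon removing a constant number of vertices of bounded degree. Combining the two limits yields the claimed formula.
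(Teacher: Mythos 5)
Your proof is correct and follows essentially the same route as the paper's: enumerate isolated $H$-copies, invoke \Cref{thm:complex} to discard complex components (so that $\Pr(\Frag^*_n \simeq H) = \E{N_n(H)} + o(1)$, which is implicit in your ``w.h.p.'' qualifier), and factor the probability of realizing a fixed $H$-copy from the acyclicity of the residual configuration model, to which \Cref{cor:acyclic} applies. The ``main technical obstacle'' you flag about uniformity over tuples does not actually arise: every admissible tuple $(v'_1,\dots,v'_h)$ has the identical degree profile $(\deg_H(v_1),\dots,\deg_H(v_h))$, so the induced residual degree sequence is the same (up to relabeling) for all choices, and the conditional acyclicity probability is exactly --- not just uniformly asymptotically --- equal across tuples by symmetry, which is how the paper dispatches this point.
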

\begin{proof}

Let  $h=|V(H)|$ and 

$V(H)=\{ v_1,\dots, v_h\}$. For each $v_i\in V(H)$, fix some ordering of
the half-edges incident to $v_i$.
Define $\Hcal_n$ as the set of possible isolated $H$-copies
in $\Conf_n(\bd_n)$. In order to pick a copy $H^\prime \in \Hcal_n$, 
we first select the vertices $v^\prime_1,\dots, v^\prime_h$. As we want the copy to be isolated, we require $d_{v^\prime_i}= \deg(v_i)$ for all $i\in [h]$. In order to completely determine $H^\prime$ we give an ordering of the half-edges incident to each vertex $v^\prime_i$. Afterwards, half-edges should be matched according to the half-edge orderings defined for $H$. Observe that there are exactly $\authe(H)$ ways of picking vertices and half-edge orderings that
result in the same subconfiguration $H^\prime$. Hence,
\begin{equation}
\label{eq:size_H_n}
|\Hcal_n|= \frac{\prod_{i\geq 0} (n_i)_{h_i} (i!)^{h_i}}{\authe(H)}.
\end{equation}
Given $H^\prime \in \Hcal_n$, let $A_n(H^\prime)$ be the event 

that $H^\prime \subseteq \Conf_n(\bd_n)$ and $\Conf_n(\bd_n)\setminus V(H^\prime)$ is acyclic. Observe that the events $A_n(H^\prime)$ are disjoint. 
Let $P_n$ be the event that no component in $\Conf_n(\bd_n)$ contains more than one cycle. Then the event $(\Frag^*_n\simeq H) \cap  P_n$ coincides with the union of the events $A_n(H^\prime)$ for all $H'\in \Hcal_n$. Thus,
by \Cref{thm:complex},
\begin{equation}
\label{eq:frag_prob1}
 \Pr(\Frag^*_n \simeq H)= o(1)+ \sum_{H^\prime \in \Hcal_n} \Pr(A_n(H^\prime) ).
\end{equation}
Recall that $d_{v_i'}=\deg(v_i)$ for all $H'\in \Hcal_n$ and all $i\in [h]$. Thus, by symmetry, the probability of $A_n(H^\prime)$ is the same for all $H^\prime \in \Hcal_n$. Fix an $H$-copy $H^\prime \in \Hcal_n$ for each $n\in \NN$. Combining \eqref{eq:size_H_n} and \eqref{eq:frag_prob1} we obtain
\begin{equation}
\label{eq:frag_prob2}
\Pr(\Frag^*_n \simeq H) = \frac{\prod_{i\geq 0} (n_i)_{h_i} (i!)^{h_i}}{\authe(H)} \Pr(A_n(H^\prime))+ o(1).
\end{equation}
Let us examine now the probability of $A_n(H^\prime)$.
Let $\widehat{G}_n= \Conf_n(\bd_n)[[n]\setminus V(H^\prime_n)]$.
As $H$ is a fragment, by definition
\begin{equation}
\label{eq:frag_prob3}
\Pr(A_n(H^\prime))= \frac{1}{\prod_{i=1}^{h}
(m_n - 2i + 1)} \Pr\left( \widehat{G}_n \text{ is acyclic} \mmid 
H^\prime \subseteq \Conf_n(\bd_n) \right).
\end{equation}
Let $\widehat{\bd}_{n-h}$ be the degree sequence obtained by removing the vertices of $V(H^\prime_n)$ from $[n]$ and relabeling the remaining vertices as
$[n-h]$. Note that $(\widehat{G}_n \mid H^\prime_n \subseteq \Conf_n(\bd_n)) \sim \Conf_{n-h}(\widehat{\bd}_{n-h})$. Clearly, the degree sequence
$\widehat{\bd}$ satisfies \Cref{assump:main}. Moreover, 
it is easy to see that the first and second moments of the related degree distribution have the same limits as those of $\bd$ (that is, $\mmnt_1$ and $\mmnt_2$) as $h=O(1)$. 
By \Cref{cor:acyclic},
\begin{equation}
\label{eq:frag_prob4}
\begin{aligned}
\Pr\left( \widehat{G}_n \text{ is acyclic} 
 \mmid H^\prime_n \subseteq \Conf_n(\bd_n) \right)
 &=\Pr\left(\Conf_{n-h}(\widehat{\bd}_{n-h})\text{ is acyclic}  \right)\\
 &= \sqrt{1-\nu} + o(1).
\end{aligned}
\end{equation}
Putting \eqref{eq:frag_prob2}, \eqref{eq:frag_prob3} and \eqref{eq:frag_prob4} together,
we conclude the proof of the theorem
\begin{equation}
\label{eq:frag_final}
\begin{aligned}
\Pr(\Frag^*_n \simeq H)  
& =  
\frac{ \sqrt{1-\nu} \prod_{i\geq 0} (n_i)_{h_i} (i!)^{h_i}}{\authe(H) \prod_{i=1}^h
(m_n - 2i + 1)} + o(1)   \\ 
& =   
\frac{\sqrt{1-\nu}}{\authe(H)}\prod_{i\geq 0}
\left(\frac{\lambda_i i!}{\mmnt_1}
\right)^{h_i} + o(1).\qedhere
\end{aligned}
\end{equation}
\end{proof}

The following corollary states that the fragment of $\Gcal_n(\bd_n)$
is asymptotically distributed like the fragment of
$\Conf_n(\bd_n)$, ignoring the components containing loops or double edges.

\begin{corollary}\label{cor:prob_frag}
Assume that $\nu<1$. Let $G$ be a simple 
fragment. 
Then 
\[
\lim_{n\to \infty}
\Pr(\Frag_n \simeq G) =
\frac{\sqrt{1-\nu}\cdot e^{\frac{\nu}{2} + \frac{\nu^2}{4} }}{\aut(G)}
\prod_{i\geq 0} 
\left(\frac{\lambda_i i!}{\mmnt_1}
\right)^{g_i},
\]
where $g_i=| \{v\in V(G) : \deg(v)=i\}|$.
\end{corollary}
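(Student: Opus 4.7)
The plan is to exploit the fact that $\Gcal_n(\bd_n)$ is distributed as $\Conf_n(\bd_n)$ conditioned on being simple, so that
\[
\Pr(\Frag_n\simeq G)=\frac{\Pr(\Frag^*_n\simeq G \text{ and } \Conf_n(\bd_n)\text{ simple})}{\Pr(\Conf_n(\bd_n)\text{ simple})}.
\]
The denominator tends to $e^{-\nu/2-\nu^2/4}$ by \Cref{thm:cycle_distribution} (equivalently \Cref{thm:simple}), so the main task is to show that the numerator behaves asymptotically like $\Pr(\Frag^*_n\simeq G)$, which is given explicitly by \Cref{thm:frag_distribution}.

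The key observation for the numerator is that, because $G$ is a \emph{simple} fragment, the event $\{\Frag^*_n\simeq G\}$ forces the unicyclic part of $\Conf_n(\bd_n)$ to have no loops or multiedges; loops and multiedges in $\Conf_n(\bd_n)$ can then only occur inside tree components (impossible, as trees have neither) or inside components with excess $\geq 1$ that are not unicyclic. First I would note that tree components never contain loops or multiedges, and then invoke \Cref{thm:complex} which says that a.a.s.\ $\Conf_n(\bd_n)$ has no components with more than one cycle. Hence, on a high-probability event,
\[
\{\Frag^*_n\simeq G\}\;\subseteq\;\{\Conf_n(\bd_n)\text{ is simple}\},
\]
from which
\[
\Pr(\Frag^*_n\simeq G \text{ and }\Conf_n(\bd_n)\text{ simple})=\Pr(\Frag^*_n\simeq G)+o(1).
\]

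Putting the two pieces together, and using $\authe(G)=\aut(G)$ since $G$ is a simple graph (no loops, all edge multiplicities equal to $1$, so the factor $2^\ell\prod m(u,v)!$ in the definition of $\authe$ equals $1$), we obtain
\[
\lim_{n\to\infty}\Pr(\Frag_n\simeq G)=\frac{1}{e^{-\nu/2-\nu^2/4}}\cdot\frac{\sqrt{1-\nu}}{\aut(G)}\prod_{i\geq 0}\left(\frac{\lambda_i\,i!}{\mmnt_1}\right)^{g_i},
\]
which rearranges to the stated formula. The only non-routine point is the step linking $\{\Frag^*_n\simeq G\}$ to $\{\Conf_n(\bd_n)\text{ simple}\}$, and here \Cref{thm:complex} does the work cleanly; the rest is bookkeeping.
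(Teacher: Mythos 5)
Your proposal is correct and follows essentially the same route as the paper: condition $\Conf_n(\bd_n)$ on simplicity, note that on the high-probability event of no complex components the event $\{\Frag^*_n \simeq G\}$ (with $G$ simple) already implies simplicity, and then combine \Cref{thm:frag_distribution}, the limiting probability of simplicity, and $\authe(G)=\aut(G)$ for simple $G$. If anything, your citation of \Cref{thm:cycle_distribution}/\Cref{thm:simple} for $\Pr(\Conf_n(\bd_n)\text{ simple})\to e^{-\nu/2-\nu^2/4}$ is more precise than the paper's pointer to \Cref{lem:cycle_count_conf}.
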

\begin{proof}
Let $A_n$ be the event that $\Conf_n(\bd_n)$ is simple (i.e., it contains neither loops nor multiple edges).
By definition, 
\[
\Pr(\Frag_n \simeq  G)= \Pr(\Frag^*_n \simeq G \, \mid \, 
A_n) = \frac{\Pr(\Frag^*_n \simeq G \, \cap \, 
A_n)}{\Pr(A_n)}.
\]
When $\Conf_n(\bd_n)$ has no complex components,
the event $(\Frag^*_n \simeq G) \, \cap \, A_n$ is equivalent
to $\Frag^*_n \simeq G$. Therefore, using \Cref{thm:complex}, we obtain
\[
\Pr(\Frag_n \simeq G)= \frac{\Pr(\Frag^*_n \simeq G)}{
\Pr(A_n)} + o(1).
\]
By \Cref{lem:cycle_count_conf}, $\Pr(A_n)=
e^{-\nu/2 - \nu^2/4}+ o(1)$. This, together with the previous
theorem and the fact that $\aut(G)=\authe(G)$ when $G$ is simple, proves the result. 
\end{proof}

From now on let $p^*_{n}(H)= \Pr(\Frag^*_n \simeq H)$, 
$p_n(G)= \Pr(\Frag_n \simeq G)$, \linebreak
$p^*(H)= \lim_{n\to \infty} p^*_n(H)$,
and $p(G)=\lim_{n\to \infty} p_n(G)$, for all unlabeled fragments $H$, and all unlabeled simple fragments $G$. We recall that, as proven in \Cref{thm:frag_distribution},
\[
p^*(H) = 
    \frac{\sqrt{1-\nu}}{\authe(H)}
    \prod_{i\geq 0} 
    \left(\frac{\lambda_i i!}{\mmnt_1}
    \right)^{h_i},
    \]
    where $h_i=| \{ 
    v\in V(H) : \deg(v)=i\}|$.
Our next goal is to show that the numbers $p^*(H)$ define a distribution over unlabeled fragments, that is
$\sum_{H} p^*_H=1$. 

\begin{definition}
    A \defin{lexicographically labeled forest (LLFo)} is a rooted forest $F$ such that
\begin{itemize}
    \item[(1)] if $F$ has $r$ components, the roots $v_1,\dots v_r$ are labeled by $[r]$
    in an arbitrary way;
    \item[(2)] if a vertex $v$ with label $\ell\in \NN^*$ has children $v_1, \dots, v_j$ (in some arbitrary order), then they are labeled by
    $\ell 1, \dots, \ell j$ respectively, where by $\ell i$ we mean the concatenation of the label $\ell$ with $i$. 
\end{itemize}    
Given an integer $K\geq 1$, we use $\TT^K_\text{lex}$ to denote the set of LLFo with at least $K$ components.
\end{definition}

Let $\bm{D}=(D_r,D)$ where $D_r$ and $D$ are two probability distributions
over non-negative integers,
and let $F$ be an LLFo. We define 
\begin{equation}\label{eq:def_prob_trees}
p^{\bm{D}}(F): = \prod_{i\geq 0}
\Pr(D_r=i)^{f^r_i} \prod_{i \geq 1} 
\Pr(D = i-1)^{f_i},
\end{equation}
where $f^r_i$ denotes the number of roots in $F$ whose degree is $i$ and $f_i$ denotes the number of non-root vertices in $F$ of degree $i$.  Equivalently, $p^{\bm{D}}(F)$ is the probability that $F$ is generated by
a branching process where
the first generation has $r$ individuals whose offspring is distributed as $D_r$, and the other elements have offspring distribution $D$. It is well known that such process has extinction probability $1$ whenever $\E{D}<1$ \cite[Theorem 3.1]{vanderhofstadRandomGraphsComplexVolII}. We can rephrase that fact as follows.

\begin{lemma}
\label{lem:branching_process}
Let $\bm{D}=(D_r,D)$ where $D_{r}$ and $D$ are two probability distributions over non-negative integers satisfying $\E{D}<1$. Fix an integer $K \geq 1$. 
Then
\[
\sum_{T\in \TT^K_\text{lex}} p^{\bm{D}}(F) = 1.
\]

\end{lemma}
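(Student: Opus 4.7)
The plan is to recognize the sum as the total probability that a multi-type Galton--Watson branching process terminates, and then invoke the classical subcritical extinction criterion. I read $\TT^K_{\text{lex}}$ as the LLFo with exactly $K$ components (so that the assertion is meaningful as a probability identity); the sum for ``at least $K$'' follows by summing the $K$-component case over the allowable root counts, but a single fixed number of roots already captures the essential content.

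First I would define the random LLFo $\mathcal{B}^K$ as follows: plant $K$ roots $v_1,\dots,v_K$ labeled $1,\dots,K$; let each root independently produce a number of children distributed as $D_r$; let every subsequent individual independently produce a number of children distributed as $D$; and label offspring according to the lexicographic convention of the definition. A term-by-term inspection of \eqref{eq:def_prob_trees} then shows that, for every finite $F \in \TT^K_{\text{lex}}$,
\[
\Pr(\mathcal{B}^K = F) \;=\; p^{\bm{D}}(F),
\]
because each root contributes a factor $\Pr(D_r=i)$ when it has $i$ children (matching $f^r_i$), and each non-root contributes $\Pr(D=i-1)$ when its degree is $i$ (matching $f_i$), the shift $i \mapsto i-1$ accounting for the edge from the vertex to its parent. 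In particular,
\[
\sum_{F \in \TT^K_{\text{lex}}} p^{\bm{D}}(F) \;=\; \Pr(\mathcal{B}^K \text{ is finite}).
\]

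The core step is to show that $\mathcal{B}^K$ is almost surely finite. Consider any root $v_j$. Each child of $v_j$ initiates, independently, a single-type Galton--Watson subtree with offspring distribution $D$; since $\E{D} < 1$, subcriticality implies this subtree is a.s.~finite. The cleanest justification is the first-moment bound: if $Z_t$ is the size of generation $t$ in such a subtree, then $\E{Z_t} = \E{D}^t \to 0$, so $\Pr(Z_t \neq 0) \to 0$ by Markov's inequality, hence extinction occurs a.s. Since $D_r$ is a proper distribution, $v_j$ has a.s.~finitely many children, so the descendant tree of $v_j$ is a finite union of a.s.-finite subtrees and is itself a.s.~finite. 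Taking the union over the $K$ roots yields $\Pr(\mathcal{B}^K \text{ is finite}) = 1$, which is the desired identity.

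The only genuine obstacle is bookkeeping: checking that the bijection between sample paths of $\mathcal{B}^K$ and elements of $\TT^K_{\text{lex}}$ respects the lexicographic labeling convention (so that each finite outcome is counted exactly once, with no symmetry factor), and verifying the degree-to-offspring shift for non-root vertices. Everything else is a direct application of the classical subcritical extinction theorem for Galton--Watson processes.
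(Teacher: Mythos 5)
Your proof is correct and follows the paper's approach: the paper itself offers no argument beyond citing the classical subcritical Galton--Watson extinction theorem, and you supply the missing details (the bijection between sample paths of the $K$-rooted branching process and elements of $\TT^K_{\text{lex}}$, and the first-moment bound $\E{Z_t}=\E{D}^t\to 0$ giving a.s.\ extinction). You also correctly spotted that the paper's definition of $\TT^K_{\text{lex}}$ as LLFo with ``at least $K$ components'' must be a typo for ``exactly $K$ components''—otherwise the sum would diverge, and the usage in the proof of Theorem~\ref{thm:proba_dist} (where it is described as ``the set $\TT^A_\text{lex}$ of LLFo consisting of $A$ components'') confirms the intended reading.
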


Fragments can be seen as collections of cycles where a tree ``grows out'' from each vertex. Therefore, we can imagine the edges of the trees as being oriented towards the cycle in their connected component. For a vertex $v$ in the fragment that is not in any of the cycles, we define its \defin{parent} to be its unique vertex $u$ that $v$ points to.

We now extend the definition of LLFo to fragments.
\begin{definition}
    A \defin{lexicographically labeled fragment (LLFr)} $H$
    is a fragment where 
\begin{itemize}
    \item[(1)] if $H$ has $r$ cycles, they are labeled by $[r]$ in a non-decreasing order according to their lengths (ties are resolved arbitrarily); 
    \item[(2)] if $v_1,\dots,v_k$ are the vertices in a $k$-cycle with label $\ell\in [r]$, they are labelled by $\ell 1, \dots, \ell k$ following an arbitrary cyclic ordering; 
    \item[(3)] if a vertex $v$ with label $\ell\in \NN^*$ has children $v_1, \dots, v_j$, they are labeled by $\ell 1, \dots, \ell j$ following some arbitrary order.
\end{itemize}
\end{definition}

Next lemma computes the number of LLFr isomorphic to a given fragment.
\begin{lemma}
\label{lemma:numb_iso_LLFr}
    Let $H$ be a fragment. For each $k\geq 1$, let $a_k$
    be the number of $k$-cycles in $H$. For each $i\geq 1$, let $h^r_i$ be the number of vertices of degree $i$ lying in some cycle of $H$, and let $h_i$ be the number of vertices of degree $i$ that are not in any cycle of $H$. Then the number of LLFr isomorphic to $H$ is
    \begin{equation}
    \label{eq:number_of_labelings}
    \gamma(H):= \frac{1}{\authe(H)} \left(\prod_{k\geq 1} a_k! (2k)^{a_k} \right)\left(
\prod_{i\geq 2} (i-2)!^{h^r_i} \right) \left( \prod_{i\geq 1} (i-1)!^{h_i} \right).
    \end{equation}
\end{lemma}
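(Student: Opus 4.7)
The plan is to set up an orbit counting argument on ``decorations'' of $H$. A decoration will consist of: (i) an ordering of the $r$ cycles respecting non-decreasing length; (ii) for each cycle of length $k$, a choice of starting half-edge (which, together with the matching, determines a cyclic traversal of the cycle); and (iii) for each vertex $v$, a linear ordering of the half-edges at $v$ leading to its children in the tree rooted at the cycle of $v$'s component. Counting these choices independently gives $\prod_k a_k!$ for (i), $\prod_k (2k)^{a_k}$ for (ii), and $\prod_{i\geq 2}((i-2)!)^{h^r_i}\prod_{i\geq 1}((i-1)!)^{h_i}$ for (iii), reproducing the numerator of \eqref{eq:number_of_labelings}.

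Each decoration $\delta$ will induce an LLFr $L_\delta$ by the canonical rule: the cycle placed in position $i$ receives label $i$, its vertices receive labels $i1, \dots, ik$ according to the traversal, and the children of any already labeled vertex receive labels by concatenation following the chosen order. Conversely, every LLFr isomorphic to $H$ arises this way, since the decoration data can be read off directly from the labels. The map $\delta \mapsto L_\delta$ is thus surjective onto the set of LLFr isomorphic to $H$.

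The key step is to identify the fibers of this map. The half-edge automorphism group of $H$, of order $\authe(H)$, acts naturally on decorations by transporting cycle orderings, starting half-edges, and child orderings simultaneously. This action clearly preserves the induced LLFr; conversely, two decorations with $L_\delta = L_{\delta'}$ must differ by a half-edge automorphism, since the common labeling yields a vertex bijection that preserves all structural roles and lifts uniquely to the half-edge level by matching ordered half-edges in both decorations. Moreover, the action is free: any automorphism fixing $\delta$ must fix the starting half-edge of each cycle, hence all cycle half-edges via the cyclic traversal; by induction on tree depth combined with the fixed child orderings, it then fixes every remaining half-edge, so it must be the identity. By orbit-stabilizer, the number of LLFr isomorphic to $H$ equals the number of decorations divided by $\authe(H)$, which gives \eqref{eq:number_of_labelings}.

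The main obstacle will be verifying carefully that the correct quotient group is $\authe(H)$ and not $\aut(H)$. This matters precisely when $H$ has loops or multi-edges, which must lie in cycles of length $1$ or $2$ since tree parts in unicyclic components are simple. The extra factors $2^\ell$ and $\prod m(u,v)!$ in the definition of $\authe(H)$ encode half-edge symmetries of such cycles that identify decorations producing the same LLFr, even though they are invisible at the vertex-automorphism level; the argument for freeness of the action and for the lifting of vertex bijections to half-edge automorphisms hinges on keeping the half-edge bookkeeping intact throughout.
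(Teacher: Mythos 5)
Your argument is correct and rests on the same orbit-counting principle as the paper, but it is executed at a different level, and the difference is worth spelling out. The paper counts \emph{vertex} labelings of $V(H)$ compatible with the LLFr rules and divides by $\aut(H)$; this requires observing that a $1$-cycle has only $1$ cyclic ordering of its vertices and a $2$-cycle only $2$ (not $2k$), so the raw count is $\prod_k a_k!\cdot 2^{a_2}\prod_{k\geq 3}(2k)^{a_k}\cdot\prod_{i\geq 2}(i-2)!^{h^r_i}\prod_{i\geq 1}(i-1)!^{h_i}$, which becomes \eqref{eq:number_of_labelings} after invoking the identity $\authe(H)=\aut(H)\cdot 2^{a_1}2^{a_2}$ (valid for any fragment since loops and multiedges can only occur inside $1$- and $2$-cycles). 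You instead count \emph{half-edge} decorations, so that every $k$-cycle contributes a uniform factor $2k$ (a choice of starting half-edge already encodes direction, and for $k=1,2$ this correctly over-counts by exactly the extra $2^{a_1}2^{a_2}$), and you quotient directly by a free action of the half-edge automorphism group. The two bookkeepings are equivalent, and your version has the aesthetic advantage of handling $k=1,2$ without case analysis; the price is that you must verify freeness of the $\authe(H)$-action and that the fibers are full orbits, both of which you argue correctly but which the paper sidesteps, since $\aut(H)$ trivially acts freely on bijective vertex-labelings. No gaps.
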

\begin{proof}
First, note that if $\phi: V(H)\rightarrow V(H)$ is an automorphism of some fragment $H$ and $\hat{H}$ is an LLFr isomorphic to $H$ then permuting the labels in $\hat{H}$ according to $\phi$ yields the same LLFr. Hence, to derive $\gamma(H)$ we will count all the ways of labelling $V(H)$ to obtain an LLFr, and divide that number by $\aut(H)$. For a fragment $H$ the only multi-edges are those involved in $2$-cycles, which have multiplicity exactly two. By \eqref{eq:def_authe}, we can write $\authe(H)= \aut(H) 2^{a_1} 2^{a_2}$, and the {\sc RHS} of \eqref{eq:number_of_labelings} can be expressed as
\[
\frac{1}{\aut(H)} \left(\prod_{k\geq 1} a_k! \right)
\left( 2^{a_2} \prod_{k\geq 3} (2k)^{a_k} \right)
\left(
\prod_{i\geq 2} (i-2)!^{h^r_i} \right) \left( \prod_{i\geq 1} (i-1)!^{h_i} \right).
\]
We argue that the number of ways we can label $H$ to obtain an LLFr corresponds to the product of the parentheses above. Such labelling is uniquely given by (1) an ordering of the cycles in $H$ that is non-decreasing with respect to their lengths, (2) a cyclic ordering of the vertices inside of each cycle, and (3) an arbitrary ordering of each vertex's children.
There are $\prod_{k\geq 1} a_k!$ ways of achieving (1). Given a $k$-cycle, there are exactly $2$ cyclic orderings of its vertices when $k=2$, and $2k$ when $k\geq 3$. Thus, there are $2^{a_2} \prod_{k\geq 3} (2k)^{a_k}$ ways of achieving (2).
Finally, let us consider (3). Given a vertex of degree $i$ lying on a cycle, there are exactly $(i-2)!$ ways of ordering its children. Similarly, there are $(i-1)!$ ways of ordering the children of a vertex of degree $i$ not lying on a cycle. Hence, there are $\left(\prod_{i\geq 2} (i-2)!^{h^r_i} \right) \left( \prod_{i\geq 1} (i-1)!^{h_i} \right)$
ways of achieving (3). This shows the result. 

\end{proof}

We can now show that $p^*$ is a probability distribution.

\begin{theorem}
\label{thm:proba_dist}
    Assume $\nu<1$. Then $\sum_H p^*(H) = 1$, where $H$ ranges over all finite unlabeled fragments.
\end{theorem}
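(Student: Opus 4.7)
The strategy is to convert the sum over unlabeled fragments into a sum over lexicographically labeled fragments (LLFr), and then decompose each LLFr into its cycle backbone plus rooted trees hanging off each cycle vertex. The tree sums can be evaluated via the branching-process interpretation of Lemma~\ref{lem:branching_process}, and the cycle sums collapse to $(1-\nu)^{-1/2}$, canceling the $\sqrt{1-\nu}$ prefactor in $p^*(H)$.

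Concretely, by \Cref{lemma:numb_iso_LLFr}, the number of LLFr isomorphic to a fragment $H$ is $\gamma(H)$, so
\[
\sum_{H} p^*(H) \;=\; \sum_{\widehat{H}\text{ LLFr}} \frac{p^*(H(\widehat{H}))}{\gamma(H(\widehat{H}))}.
\]
Separating the $h_i$ counts into cycle vertices ($h^r_i$) and tree vertices ($h^t_i = h_i - h^r_i$), and plugging in the formulas for $p^*(H)$ and $\gamma(H)$, each $\authe(H)$ cancels and the ratio reorganizes into
\[
\frac{p^*(H)}{\gamma(H)} \;=\; \sqrt{1-\nu}\cdot\prod_{k\geq 1} \frac{1}{a_k!\,(2k)^{a_k}} \prod_{v\text{ on a cycle}} \frac{\lambda_{d_v}d_v(d_v-1)}{\rho_1} \prod_{u\text{ not on a cycle}} \frac{\lambda_{d_u}d_u}{\rho_1},
\]
using $(\lambda_i i!/\rho_1)/(i-2)! = \lambda_i i(i-1)/\rho_1$ and $(\lambda_i i!/\rho_1)/(i-1)! = \lambda_i i/\rho_1$.

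Now I would introduce the offspring distributions $\Pr(D'=j) := \lambda_{j+1}(j+1)/\rho_1$ for non-root tree vertices (one parent, $j$ children of degree $j+1$) and $\Pr(D^*=j) := \lambda_{j+2}(j+2)(j+1)/(\rho_1\nu)$ for the cycle-vertex roots (two cycle-neighbors, $j$ tree-children of degree $j+2$). A direct check gives $\mathbb{E}[D']=\rho_2/\rho_1=\nu<1$, and $\sum_j \lambda_{j+2}(j+2)(j+1)/\rho_1 = \nu$, so $D^*$ is a bona fide probability distribution. For each cycle vertex $v$, the sum of the tree factor over all LL trees rooted at $v$ equals $\nu \cdot \sum_{T} p^{(D^*,D')}(T) = \nu$ by \Cref{lem:branching_process} applied with $K=1$ (extinction is certain since $\mathbb{E}[D']<1$). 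Doing this for each of the $\sum_k k a_k$ cycle vertices independently gives
\[
\sum_{\substack{\widehat{H}\text{ LLFr}\\ \text{with cycle counts }(a_k)}} \frac{p^*(H)}{\gamma(H)} \;=\; \sqrt{1-\nu}\,\prod_{k\geq 1} \frac{\nu^{k a_k}}{a_k!\,(2k)^{a_k}}.
\]

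Finally, summing over cycle count vectors $(a_k)_{k\geq 1}$ factorizes as a product of Taylor expansions for the exponential:
\[
\sum_{(a_k)} \sqrt{1-\nu}\,\prod_{k\geq 1} \frac{(\nu^k/(2k))^{a_k}}{a_k!} \;=\; \sqrt{1-\nu}\,\prod_{k\geq 1}\exp\!\Bigl(\tfrac{\nu^k}{2k}\Bigr) \;=\; \sqrt{1-\nu}\,\exp\!\Bigl(-\tfrac{1}{2}\ln(1-\nu)\Bigr) \;=\; 1,
\]
which is the claim. The main subtlety is the bookkeeping that separates root and non-root contributions in the trees and recognizes the normalization constant $\nu$ for $D^*$; once the right offspring distributions are identified, the branching-process lemma together with $\nu<1$ does the rest, and the fact that the cycle generating function $\sum_k \nu^k/(2k) = -\tfrac{1}{2}\ln(1-\nu)$ is precisely what is needed to cancel the $\sqrt{1-\nu}$ factor is, in hindsight, the same identity that powered \Cref{lem:cycle_count_conf}.
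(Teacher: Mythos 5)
Your proof is correct and follows essentially the same route as the paper: convert to LLFr via \Cref{lemma:numb_iso_LLFr}, separate cycle-vertex and tree-vertex contributions, recognize the resulting root and non-root offspring distributions (your $D^*$ and $D'$ coincide with the paper's $D_r$ and $D$ after the index shift, using $\rho_1\nu=\rho_2$), and invoke \Cref{lem:branching_process}. The only cosmetic difference is that you finish by summing over cycle-count vectors directly via the Taylor series of $\exp$, whereas the paper reaches the same identity by appeal to \Cref{cor:acyclic}.
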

\begin{proof}
    Fix $\bm{a}=(a_k)_{k\geq 1}$ a sequence of non-negative integers  and suppose $A = \sum_{k\geq 1} k a_k<\infty$. 

    Let $\FF^{\bm{a}}$ be the set of all unlabeled fragments containing exactly $a_k$ $k$-cycles for each $k\geq 1$. Then, by \Cref{cor:acyclic}, to prove the statement it is enough to show that
    \begin{equation}
    \label{eq:frag_sum_aux2}
    \sum_{H\in \FF^{\bm{a}}} p^*(H) = \sqrt{1-\nu} \, \prod_{k\geq 1} \frac{(\nu^k/2k)^{a_k}}{a_k!}.
    \end{equation}
    Similarly, let $\FF^{\bm{a}}_\text{lex}$ be the set of all LLFr containing exactly $a_k$ $k$-cycles for each $k\geq 1$. By \Cref{lemma:numb_iso_LLFr}, we can write
    \begin{equation}
    \label{eq:using_LLFr}
\sum_{H\in \FF^{\bm{a}}} p^*(H) =\sum_{H\in \FF^{\bm{a}}_{\text{lex}}}
    \frac{p^*(H)}{\gamma(H)},  
    \end{equation}
    where $\gamma(H)$ is given in \eqref{eq:number_of_labelings}.
    Given $H\in \FF^{\bm{a}}_{\text{lex}}$ and $i\geq 1$, we write $h^r_i$ for the number of vertices of degree $i$ lying in some cycle of $H$, and $h_i$
    for the number of vertices of degree $i$ in $H$ that are not in a cycle. Since $\sum_{i\geq 2} h_i^r= \sum_{k\geq 1} k a_k$, it follows that,
\begin{align}
\nonumber
\frac{p^*(H)}{\gamma(H)} \, & = \,
\sqrt{1-\nu} \prod_{k\geq 1} \frac{1}{ a_k! (2k)^{a_k}
} \prod_{i\geq 2}  \left(\frac{\lambda_i i (i-1)}{\mmnt_1} \right)^{h^r_i} 
\prod_{i\geq 1}  \left(\frac{\lambda_i i}{\mmnt_1} \right)^{h_i} \\
\label{eq:frag_sum_aux1}
& = \, \sqrt{1-\nu} \prod_{k\geq 1} \frac{\nu^{k a_k}}{ a_k! (2k)^{a_k}
} \prod_{i\geq 2}  \left(\frac{\lambda_i i (i-1)}{\mmnt_2} \right)^{h^r_i} 
\prod_{i\geq 1}  \left(\frac{\lambda_i i}{\mmnt_1} \right)^{h_i}.
\end{align}

Let $v_1,\dots, v_A$
be the vertices belonging to the cycles in $H$, ordered in lexicographical order. 
We define the LLFo $F_H$, as the one containing $A$ trees, corresponding to the ones growing out of $v_1,\dots, v_A$ in that order. Observe that the map $H\mapsto F_H$
is a bijection between $\FF^{\bm{a}}_\text{lex}$ and the set $\TT^A_\text{lex}$ of LLFo consisting of $A$ components. See \Cref{fig:bijection} for an example.  \par
Consider the following distributions  $D$ and $D_r$ over non-negative integers:
\begin{align*}
    \Pr(D_r= i - 2) &=\frac{i(i-1)\lambda_i}{\mmnt_2},\; \text{ for all } i\geq 2,\\
    \Pr(D = i-1)&= \frac{i \lambda_i }{\mmnt_1},\quad\qquad\,  \text{ for all } i\geq 1.
\end{align*}

Using \eqref{eq:def_prob_trees}, we can rewrite \eqref{eq:frag_sum_aux1} as
\[
\frac{p^*(H)}{\gamma(H)} =\sqrt{1-\nu} \Big(\prod_{k\geq 1} \frac{\nu^{ka_k}}{ a_k! (2k)^{a_k}
}\Big) p_{F_H}^{\bm{D}}.
\]
From \eqref{eq:using_LLFr} and using the observation that $H\mapsto F_H$ is a bijection between $\FF^{\bm{a}}_{\text{lex}}$ and $\TT^A_{\text{lex}}$, we obtain 
\[
  \sum_{H\in \FF^{\bm{a}}} p^*(H) =  \sqrt{1-\nu} \prod_{k\geq 1} \frac{(\nu^k/2k)^{a_k}}{ a_k!}
    \sum_{F\in \TT^A_\text{lex}}
    p_{F}^{\bm{D}} =  \sqrt{1-\nu} \prod_{k\geq 1} \frac{(\nu^k/2k)^{a_k}}{ a_k!}.
\]
In the last equality we used that $ \sum_{F\in \TT^A_{\text{lex}}} p_{F}^{\bm{D}} = 1$ by \Cref{lem:branching_process}, since $\E{D}=\nu <1$.

\begin{figure}[tb]
    \centering
    \begin{subfigure}{0.40\textwidth}
    \includegraphics[width=\textwidth]{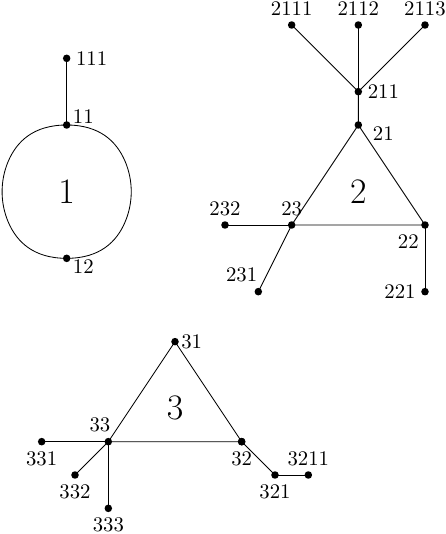}
    \caption{An LLFr.}
    \end{subfigure}
    \hfill
    \begin{subfigure}{0.44\textwidth}
    \includegraphics[width=\textwidth]{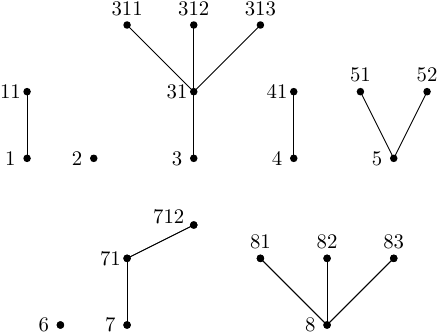}
    \caption{The corresponding LLFo.}
    \end{subfigure}
    \caption{Example of the map $H\mapsto F_H$.}
    \label{fig:bijection}
\end{figure} 

\end{proof}

\begin{corollary}
\label{cor:frag_integrable}
Assume $\nu<1$. Let $\FF$ be the class of unlabeled fragments and let $H\in \FF$. Then
the sequences
$(H\mapsto p^*_n(H))_{n\in \NN}$
and 
$(H\mapsto p_n(H))_{n\in \NN}$
of real maps over $\FF$
are tight. In particular, for all sequences $\omega_n $ tending to infinity as $n\to \infty$, $\Pr(|\Frag^*_n|\geq \omega_n )= o(1)$ and $\Pr(|\Frag_n|\geq \omega_n )=o(1)$.
\end{corollary}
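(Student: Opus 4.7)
The plan is to apply Lemma~\ref{lem:tight} (the tightness characterization via exchange of limit and sum) to the sequences of maps $H\mapsto p^*_n(H)$ on the countable set $\FF$ and $G\mapsto p_n(G)$ on the subset of simple fragments. The three hypotheses of that lemma will be verified: pointwise convergence, finiteness of the limiting sum, and equality of the limit of the sums with the sum of the limits.

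For the configuration-model fragment, pointwise convergence $p^*_n(H)\to p^*(H)$ is exactly Theorem~\ref{thm:frag_distribution}, and Theorem~\ref{thm:proba_dist} gives $\sum_{H\in \FF} p^*(H)=1<\infty$. The decisive observation is that $\sum_{H\in \FF} p^*_n(H)=1$ for every $n$, because $\Frag^*_n$ is by definition a (possibly empty) unlabeled fragment of finite size, so these probabilities form a distribution. Consequently $\lim_{n}\sum_{H}p^*_n(H)=1=\sum_{H}p^*(H)$, and the ``only if'' direction of Lemma~\ref{lem:tight} delivers tightness of $(p^*_n)_{n\in\NN}$.

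For the simple-graph fragment, pointwise convergence $p_n(G)\to p(G)$ is Corollary~\ref{cor:prob_frag}, and $\sum_{G}p_n(G)=1$ holds for each $n$ because $\Frag_n$ is always a simple fragment. The only nontrivial check is $\sum_{G}p(G)=1$, where $G$ ranges over simple fragments. Since $\aut(G)=\authe(G)$ for simple $G$, one has $p(G)=e^{\nu/2+\nu^2/4}p^*(G)$. Summing the per-cycle-spectrum identity from the proof of Theorem~\ref{thm:proba_dist} restricted to $a_1=a_2=0$ gives
\[
\sum_{G\text{ simple}} p^*(G)=\sqrt{1-\nu}\prod_{k\geq 3}\exp\!\left(\frac{\nu^k}{2k}\right)=\sqrt{1-\nu}\cdot\exp\!\left(-\tfrac12\ln(1-\nu)-\tfrac{\nu}{2}-\tfrac{\nu^2}{4}\right)=e^{-\nu/2-\nu^2/4},
\]
using the Taylor expansion $-\tfrac12\ln(1-\nu)=\sum_{k\geq 1}\nu^k/(2k)$. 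Multiplying by $e^{\nu/2+\nu^2/4}$ yields $\sum_G p(G)=1$, and Lemma~\ref{lem:tight} again gives tightness of $(p_n)_{n\in\NN}$.

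For the ``in particular'' statement, fix $\omega_n\to\infty$ and $\varepsilon>0$. By tightness of $(p^*_n)$ there is a finite $T\subseteq \FF$ with $\sum_{H\notin T}p^*_n(H)<\varepsilon$ for all $n$. Let $M=\max_{H\in T}|V(H)|$; for $n$ large enough that $\omega_n>M$, every $H$ with $|V(H)|\geq\omega_n$ lies outside $T$, so
\[
\Pr(|\Frag^*_n|\geq \omega_n)=\sum_{H:\,|V(H)|\geq\omega_n}p^*_n(H)\leq\sum_{H\notin T}p^*_n(H)<\varepsilon,
\]
establishing $\Pr(|\Frag^*_n|\geq\omega_n)=o(1)$; the identical argument with $p_n$ handles $\Frag_n$. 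The only genuinely computational step is the verification that $\sum_G p(G)=1$, and this reduces to one geometric-series rearrangement, so no serious obstacle arises.
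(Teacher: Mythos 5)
Your argument is correct, and the first tightness claim (for $p^*_n$) as well as the ``in particular'' step match the paper's approach: one notes that $\sum_{H\in\FF}p^*_n(H)=1$ for each $n$, that $\sum_{H\in\FF}p^*(H)=1$ by Theorem~\ref{thm:proba_dist}, and invokes Lemma~\ref{lem:tight}. Where you diverge from the paper is in the second claim, tightness of $(p_n)_{n\in\NN}$. The paper uses a domination argument: since $\Gcal_n(\bd_n)$ is $\Conf_n(\bd_n)$ conditioned on simplicity, one has
\[
p_n(H)\;=\;\Pr(\Frag^*_n\simeq H\mid \Conf_n(\bd_n)\text{ simple})\;\leq\;\frac{p^*_n(H)}{\Pr(\Conf_n(\bd_n)\text{ simple})}\;\leq\;C\,p^*_n(H)
\]
for some constant $C<\infty$ uniformly in $n$ (because the simplicity probability is bounded away from zero by Theorem~\ref{thm:simple}), and tightness is preserved under multiplication by a constant. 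You instead reapply Lemma~\ref{lem:tight} from scratch, which requires verifying $\sum_G p(G)=1$; your computation via the cycle-spectrum identity $\sum_{H\in\FF^{\bm a}}p^*(H)=\sqrt{1-\nu}\prod_{k\geq1}\frac{(\nu^k/2k)^{a_k}}{a_k!}$ restricted to $a_1=a_2=0$, together with the Taylor series $-\tfrac12\ln(1-\nu)=\sum_{k\geq1}\nu^k/(2k)$ and the relation $p(G)=e^{\nu/2+\nu^2/4}p^*(G)$ for simple $G$, is correct. The paper's route is shorter and avoids any summation, but your route has the side benefit of explicitly establishing that $p$ is itself a probability distribution on simple fragments — a fact that the paper only has implicitly.
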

\begin{proof}
The last part of the statement follows from the definition of tight sequences. The fact that
$(H\mapsto p^*_n(H))_{n\in \NN}$ is tight follows from \Cref{thm:proba_dist}. To see that $(H \mapsto p_n(H))_{n\in \NN}$ is tight as well, note that by definition
\begin{equation}
    p_n(H)\leq \frac{\Pr(\Frag^*_n \simeq H)}{\Pr( \Conf_n(\bd_n) \text{ is simple})}
\end{equation}
and
$\Pr( \Conf_n(\bd_n) \text{ is simple})\geq e^{-\nu/2 - \nu^4/4} - o(1)>0$. Thus $p_n(H)\leq Cp^*_n(H)$ and since the latter sequence is tight, so is the former one.
\end{proof}

\section{First part of \texorpdfstring{\Cref{thm:main}}{Theorem 1.3}: A finite union of intervals}
\label{sec:subcritical}

In this section we show that $\overline{L(\bd)}$, the closure of the limit probabilities, is a union of closed intervals. We postpone the supercritical and critical cases $\nu\geq 1$ for the next section, and focus on the subcritical case $0<\nu<1$. The key point is that, in the subcritical regime, the $\fo$ properties of $\Gcal_n(\bd_n)$ are determined w.h.p. by its fragment $\Frag_n$. 
This is (implicitly) stated in \cite[Lemma 3.12]{lynchConvergenceLawRandom2005}. However, the results in \cite{lynchConvergenceLawRandom2005,lynchConvergenceLawRandom2003, } contain slight inaccuracies, that will be discussed in \Cref{sec:convergence_law}. 

\begin{theorem}[Zero-one Law for {\sc FO} in $\Gcal(\bd)=(\Gcal_n(\bd_n))_{n\in \mathbb{N}}$]
\label{thm:fragment_zeroone}
Suppose that $\nu<1$. Let $H\in \FF$ be an unlabelled fragment, and $\varphi\in \fog$ be a sentence. Then
\[
\Ln \Pr(\Gcal_n(\bd_n) \text{ satisfies }   \varphi \, \mid \, \Frag_n \simeq H )  \in \{ 0,1\}. 
\]
\end{theorem}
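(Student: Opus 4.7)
My plan is to combine a locality reduction for $\fo$ sentences with a concentration argument for local neighborhood counts in the ``tree part'' of $\Gcal_n(\bd_n)$.

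First, I would invoke Gaifman's locality theorem: every $\fo$ sentence $\varphi$ of quantifier rank $k$ is equivalent to a Boolean combination of basic local sentences of the form ``there exist $m$ vertices $v_1,\dots,v_m$ with pairwise distances exceeding $2r$ and such that each $r$-neighborhood has rooted type $\tau_i$'', for suitable $r=r(k)$ and $m\leq k$. In our setting each such statement reduces to the threshold counts ``at least $m$ vertices have $r$-neighborhood type $\tau$'', because the vertices sharing a given type that are too close to each other lie in a bounded-size region and can be excised. Thus it suffices to show that, conditional on $\Frag_n\simeq H$, the indicator of each such threshold count converges in probability to a constant in $\{0,1\}$.

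Second, by \Cref{thm:complex} we may further condition on the w.h.p.\ event that $\Gcal_n(\bd_n)$ has no complex components. On this event $\Gcal_n(\bd_n) = H \sqcup T_n$ with $T_n$ a forest on the remaining vertices, and the $r$-neighborhood of any $v \in V(H)$ is entirely contained in $H$, contributing a fixed integer $N^H_\tau$ to the count of type $\tau$. Conditional on $\Frag_n\simeq H$, the forest $T_n$ is distributed as $\Gcal_{n-|V(H)|}(\bd'_n)$ conditioned on being acyclic, where $\bd'_n$ is obtained from $\bd_n$ by removing the degrees at $V(H)$. Since $|V(H)|=O(1)$, the sequence $\bd'=(\bd'_n)_{n\in\NN}$ still satisfies \Cref{assump:main} with the same $D$ and the same $\nu<1$, and by \Cref{cor:acyclic} the acyclicity conditioning has positive asymptotic probability.

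Third, let $N^T_\tau(n)$ count vertices of $T_n$ whose $r$-neighborhood has rooted-tree type $\tau$. I would show that $N^T_\tau(n)/n \to q_\tau$ in probability for an explicit $q_\tau\geq 0$ coming from the Benjamini--Schramm limit of $\Gcal_n(\bd_n)$, which under \Cref{assump:main} is the unimodular Galton--Watson tree with root offspring $D$ and further-generation offspring equal to the size-biased law of $D$ shifted by $-1$; see \cite{vanderhofstadRandomGraphsComplexVolI}. The first moment $\E{N^T_\tau(n)} = q_\tau n + o(n)$ is a direct adaptation of \Cref{lem:expected_copies}, while a matching second-moment bound follows from the asymptotic independence of $r$-neighborhoods of two uniformly chosen vertices in the subcritical regime. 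By \Cref{assump:main}.(iv), $q_\tau>0$ for every rooted tree $\tau$ whose degrees all lie in the support of $D$, in which case $N^T_\tau(n)\to\infty$ in probability; otherwise $N^T_\tau(n)=0$ deterministically. Either way the indicator of $\{N^H_\tau + N^T_\tau(n)\geq m\}$ converges in probability to a constant in $\{0,1\}$, which gives the theorem.

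The main obstacle will be the second-moment step establishing concentration of $N^T_\tau(n)$ under the conditioning. Since $\{\Frag_n\simeq H\}$ has positive asymptotic probability by \Cref{thm:frag_distribution}, the concentration transfers from the unconditional configuration model on the reduced degree sequence $\bd'_n$; the technical work is to decompose $\Var{N^T_\tau(n)}$ as a sum over pairs of vertices and to bound, via configuration-model enumeration as in \Cref{lem:expected_copies}, the contribution of pairs whose $r$-neighborhoods overlap.
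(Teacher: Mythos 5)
The paper does not give a self-contained proof of this theorem: it cites Lynch's work (noting its inaccuracies) and sketches the intended argument in \Cref{sec:convergence_law}, which proceeds via Ehrenfeucht--Fra\"iss\'e / pebble games and conditions on the $r$-core $\Core^*_{n,r}$ of the configuration model, following Lynch's template for $\Gcal_n(c/n)$. Your route via Gaifman locality plus concentration of local type counts is genuinely different in its model-theoretic engine (Gaifman's normal form versus an explicit duplicator strategy), although both are ``locality'' arguments that ultimately rest on the fact that the tree part has a well-defined local weak limit and every realizable local pattern appears either zero times or many times. Your approach buys a cleaner probabilistic statement (a law of large numbers for type counts), while the pebble-game approach avoids the infinitary bookkeeping that Gaifman normal form introduces.

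There are two points where your sketch would need to be tightened. First, the reduction of a Gaifman basic local sentence $\exists^{\ge m,\mathrm{scat}}_r\psi$ to a threshold count is not as direct as ``excising bounded regions'': the local formula $\psi$ is satisfied by infinitely many isomorphism types of $r$-balls, and $r$-balls in $T_n$ are not of deterministically bounded size (only in probability), so one cannot excise a fixed-size region around each witness. The argument that does work is to fix a single type $\tau$ of bounded size with $\tau\models\psi$ and $q_\tau>0$ (such $\tau$ exists iff some vertex can satisfy $\psi$, by \Cref{assump:main}(iv)); then $N^T_\tau(n)\to\infty$ in probability and, because this particular $\tau$ has bounded size, a greedy/second-moment argument yields $m$ scattered witnesses. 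Second, the claim that $T_n$ given $\Frag_n\simeq H$ is distributed as $\Gcal_{n-|V(H)|}(\bd'_n)$ conditioned on acyclicity should be justified through exchangeability (the identity of the vertices assigned to $H$ is random, but only the multiset of remaining degrees matters), and it is cleaner to pass through the configuration model here as the paper does, since $\Gcal_n(\bd_n)$ couples the two pieces through the simplicity constraint. Neither of these is a fatal flaw, but both need to be carried out carefully for the argument to be complete.
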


Recall that when $\nu<1$ and $H\in \FF$,
$p_{n}(H)=\Pr(\Frag_n\simeq H)$,
and $p(H)=\Ln p_n(H)$. We now prove that $L(\bd)$ equals the set of partial sums of fragment probabilities.

\begin{theorem} Assume $\nu<1$. Then
\label{thm:partial_sums}
\begin{equation}\label{eq:partial_sums}
\overline{L(\bd)}= \left\{ \sum_{H\in \Ucal} p_H \mmid
\Ucal \subseteq \FF
\right\}.
\end{equation}
\end{theorem}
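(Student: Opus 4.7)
The plan is to prove both inclusions in \eqref{eq:partial_sums}; first observe that the right-hand side is closed in $[0,1]$, since the map $\{0,1\}^{\FF} \to [0,1]$ defined by $(x_H)_H \mapsto \sum_{H \in \FF} x_H p_H$ is continuous in the product topology (truncating the tail using $\sum_H p_H = 1$, a consequence of \Cref{thm:proba_dist} and \Cref{cor:prob_frag}) and the domain is compact by Tychonoff, so the image is compact. For the inclusion $L(\bd) \subseteq \{\sum_{H \in \Ucal} p_H : \Ucal \subseteq \FF\}$, let $\varphi$ be any $\fog$-sentence. By \Cref{thm:fragment_zeroone}, $\lim_n \Pr(\Gcal_n(\bd_n) \text{ satisfies } \varphi \mid \Frag_n \simeq H) \in \{0,1\}$ exists for every $H \in \FF$; let $\Ucal_\varphi$ collect those $H$ for which this limit equals $1$. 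Conditioning on the fragment,
\[
\Pr(\Gcal_n(\bd_n) \text{ satisfies } \varphi) = \sum_{H \in \FF} \Pr(\Gcal_n(\bd_n) \text{ satisfies } \varphi \mid \Frag_n \simeq H) \, p_n(H).
\]
Given $\epsilon > 0$, tightness (\Cref{cor:frag_integrable}) provides a finite $F \subseteq \FF$ with $\sum_{H \notin F} p_n(H) < \epsilon$ uniformly in $n$, and each summand in the restricted sum converges to $[H \in \Ucal_\varphi]\, p_H$; sending $n \to \infty$ and then $\epsilon \to 0$ yields $p(\varphi, \bd) = \sum_{H \in \Ucal_\varphi} p_H$, of the desired form.

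For the converse inclusion, $\sum_H p_H = 1$ reduces the problem to showing $\sum_{H \in \Ucal'} p_H \in \overline{L(\bd)}$ for every finite $\Ucal' \subseteq \FF$. Given such $\Ucal'$ and $\epsilon > 0$, I will build an $\fog$-sentence $\varphi_{\Ucal', K}$ whose limit probability is $O(\epsilon)$-close to $\sum_{H \in \Ucal'} p_H$. For each $H \in \Ucal'$ with distinct unicyclic components $C_1, \dots, C_r$ of multiplicities $a_1, \dots, a_r$, and an integer $K$, let $\psi_{H,K}$ be the $\fog$-sentence asserting: (a) for each $j \in [r]$, the graph contains exactly $a_j$ connected components isomorphic to $C_j$; and (b) the graph contains no unicyclic component of size at most $K$ that is not isomorphic to any $C_j$. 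Both clauses are $\fog$-expressible, since there are only finitely many isomorphism types of unicyclic multigraphs of size at most $K$ and ``having a component isomorphic to a fixed finite $C$'' is $\fog$ (existentially quantify a $C$-copy and forbid edges between its vertices and their complement). Set $\varphi_{\Ucal', K} = \bigvee_{H \in \Ucal'} \psi_{H,K}$; for distinct $H, H' \in \Ucal'$ the sentences $\psi_{H,K}$ and $\psi_{H',K}$ are mutually exclusive, as they prescribe different multiplicities for at least one unicyclic isomorphism type.

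The crucial approximation is $\{\Frag_n \simeq H\} \subseteq \{\psi_{H,K} \text{ holds}\}$ (immediate), with equality on the event that every unicyclic component of $\Gcal_n(\bd_n)$ has at most $K$ vertices. By \Cref{cor:frag_integrable}, for $K$ sufficiently large the probability of some unicyclic component having more than $K$ vertices is at most $\epsilon$ uniformly in $n$. Combined with mutual exclusivity,
\[
\Bigl| \Pr(\Gcal_n(\bd_n) \text{ satisfies } \varphi_{\Ucal', K}) - \sum_{H \in \Ucal'} p_n(H) \Bigr| \leq |\Ucal'| \cdot \epsilon
\]
for all sufficiently large $n$. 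Passing to $n \to \infty$ and then $\epsilon \to 0$ places $\sum_{H \in \Ucal'} p_H$ in $\overline{L(\bd)}$, completing the proof. The main obstacle is the construction and analysis of $\psi_{H,K}$ as an $\fog$-approximation to the non-$\fog$-definable event $\{\Frag_n \simeq H\}$, with uniform (in $n$) control of the approximation error through tightness of the fragment distribution.
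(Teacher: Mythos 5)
Your proof is correct and takes essentially the same approach as the paper's: the forward inclusion combines \Cref{thm:fragment_zeroone} with tightness of the fragment distribution, and the converse inclusion approximates the non-$\fog$ event $\{\Frag_n\simeq H\}$ by a family of $\fog$-sentences whose error is controlled uniformly in $n$ by a tightness argument. Your approximating sentence $\psi_{H,K}$ (prescribing the exact multiset of small unicyclic components, with error controlled by $\Pr(|\Frag_n|>K)$ via \Cref{cor:frag_integrable}) and your reduction to finite $\Ucal'$ via closedness are minor variants of the paper's $\phi_k(H)$ and its direct double-limit argument over $\Ucal_k\uparrow\Ucal$, which instead controls the error by $\Pr(A_{n,k})$ through \Cref{lem:cycle_count_conf}; both variants are equally valid and the overall structure is identical.
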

\begin{proof}
Let $S(\bd)$ be the set of partial sums in the {\sc RHS} of \eqref{eq:partial_sums}.

We first show that $\overline{L(\bd)}\subseteq S(\bd)$.
It is a known fact (see e.g.~\cite{kakeya, nymann2000paper})
that $S(\bd)$ is closed and has no isolated points. Thus, $\overline{S(\bd)}=S(\bd)$, and it suffices to show
$L(\bd) \subseteq S(\bd)$.
Let $\phi\in \fog$ be a sentence.
For each $H\in \FF$, define 
\begin{align*}
p_n(\phi,H)&=
\Pr( \Gcal_n(\bd_n) \text{ satisfies }   \phi,\,
\Frag_n\simeq H)\\
&=
\Pr( \Gcal_n(\bd_n) \text{ satisfies }   \phi  \mid
\Frag_n\simeq H)\, p_n(H).
\end{align*}
Define
$
p(\phi)= \Ln
\sum_{H\in \FF} 
p_{n}(\phi, H)
$.
As $p_{n}(\phi, H)\leq 
p_n(H)$,
the sequence 
of real maps over $\FF$, 
$(H\mapsto p_{n}(\phi,H))_{n\in \NN}$ is tight, and the sum and limit in the definition of $p(\phi)$ may be exchanged.
By \Cref{thm:fragment_zeroone}, we know that
$\lim_{n\to \infty} \Pr( \Gcal_n(\bd_n) \text{ satisfies }   \phi  \mid
\Frag_n\simeq H)\in \{0,1\}$. Let $\Ucal=\Ucal_\phi\subseteq \FF$ be the set of fragments for which this limit is $1$. We conclude
\[
p(\phi)= 
\sum_{H\in \FF}  \Ln p_{n}(\phi, H) = \sum_{H\in \Ucal} p(H) \in S(\bd) .
\]

\par
We now show that $\overline{L(\bd)}\supseteq S(\bd)$.
Let $\Ucal \subseteq \FF$ be an arbitrary family of fragments.
We give a sequence of $\fog$ sentences $\phi_{k}(\Ucal)$
satisfying $\lim_{k\to \infty} p(\phi_{k}(\Ucal)) =
\sum_{H\in \Ucal} p(H)$. For each $H\in \FF$ and $k\in \NN$, let $\phi_{k}(H)\in \fog$ be the sentence stating that the graph
$G$ contains an isolated copy of $H$, and that no $k$-tuple of vertices outside this copy induce a cycle. 
Suppose that $\Ucal$ is infinite. 
Let $(\Ucal_i)_{i\in \NN}$ be a monotonically increasing chain of finite sets $\Ucal_i\subset \Ucal$ satisfying $\bigcup_{i\in \NN}
\Ucal_i = \Ucal$. Define
$\phi_{k}(\Ucal)=\bigvee_{H\in \Ucal_k} \phi_{k}(H)$.
The union of disjoint events $\left(\bigvee_{H\in \Ucal_k} \Frag_n\simeq H\right)$ implies
$\Gcal_n(\bd_n)\text{ satisfies }   \phi_{k}(\Ucal)$.
Let $A_{n,k}$ be the event that $\Gcal_n(\bd_n)$
contains a cycle of length larger than $k$.
Then, 
$(\Gcal_n(\bd_n)\text{ satisfies }   \phi_{k}(\Ucal))\wedge (\neg A_{n,k})$
implies 
$\left(\bigvee_{H\in \Ucal_k} \Frag_n\simeq H\right)$ as well.
Thus,
\begin{align}\label{eq:error}
\lvert 
p_n(\phi_{k}(\Ucal))-
\sum_{H\in \Ucal_k}
p_n(H) \rvert \leq \Pr(A_{n,k}).
\end{align}
By \Cref{lem:cycle_count_conf} and Markov inequality, 
$\lim_{k\to\infty} \Ln \Pr(A_{n,k})=0$. 

Taking limits in both sides of \eqref{eq:error}, first with respect to $n\to \infty$ and then to $k\to \infty$, we obtain,
\[
\lim_{k\to \infty} 
\left(p(\phi_{k}(\Ucal))-
\sum_{H\in \Ucal_k}
p(H)\right) = 0.\]
By the definition of infinite sum this proves that 
\[\sum_{H\in \Ucal}
p(H) = \lim_{k\to \infty} p(\phi_{k}(\Ucal)) \in \overline{L(\bd)}. \]

If $\Ucal$ is finite, then the proof follows from a simpler argument, by defining
$\phi_{k}(\Ucal) = \bigvee_{H\in\Ucal} \phi_{k}(H)$.
\end{proof}

The desired results about $\overline{L(\bd)}$
follow from analysing the set of fragment probabilities and using
Kakeya's Criterion (\Cref{ch:binom_lims.kakeya}). 
A technical difficulty that arises in the proof is that fragment probabilities depend on many more features of $\bd$ other than the parameter $\nu$. In order to circumvent this issue, we use the following lemma. 

\begin{lemma}
\label{lem:fragment_cycles}
    Suppose that $\nu<1$. Define $Q= Q(\nu) = \sqrt{1-\nu}\cdot e^{\nu/2 + \nu^2/4}$.  Let $\bm{a}=(a_n)_{n\geq 3}$ be a sequence of natural numbers $a_n\in \NN$
    with $\sum_{n\geq 3} a_n<\infty$. Consider
    \[\FF_{\bm{a}}=\{
    H\in \FF \mid H \text{ contains exactly $a_i$ $i$-cycles for each $i\geq 3$}
    \}.\]
    Then
    \begin{equation}
    \label{eqn:fragment_cycles}
    \sum_{H\in \FF_{\bm{a}}} p(H) =Q
    \prod_{i\geq 3} \frac{(\nu^i/2i)^{a_i}}{a_i!}.
    \end{equation}
    In particular, $p(H)$ is maximized when $H$ is the empty fragment. 
\end{lemma}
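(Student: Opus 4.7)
The plan is to derive this identity directly from the computation carried out inside the proof of \Cref{thm:proba_dist}, together with the relationship between the fragments of $\Conf_n(\bd_n)$ and of $\Gcal_n(\bd_n)$ established in \Cref{cor:prob_frag}. Recall that equation \eqref{eq:frag_sum_aux2} in that proof asserts, for every profile of non-negative integers $(b_k)_{k\geq 1}$ with $\sum_k k b_k<\infty$, the identity
\[
\sum_{H} p^*(H) \;=\; \sqrt{1-\nu}\,\prod_{k\geq 1}\frac{(\nu^k/2k)^{b_k}}{b_k!},
\]
where the sum ranges over all unlabelled fragments containing exactly $b_k$ cycles of length $k$ for each $k\geq 1$.

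I would then carry out two short steps. First, I specialize to $b_1=b_2=0$ and $b_k=a_k$ for $k\geq 3$: by definition a simple fragment is one with no cycles of length $1$ or $2$, so the set of summands coincides exactly with $\FF_{\bm{a}}$. Second, I invoke \Cref{cor:prob_frag} (equivalently the formula in \Cref{thm:frag_distribution} combined with \Cref{cor:acyclic}), which yields $p(H)=e^{\nu/2+\nu^2/4}\cdot p^*(H)$ for every simple $H$. Multiplying the previous series termwise by the positive constant $e^{\nu/2+\nu^2/4}$ (no limit interchange is required, since the factor is the same for every summand) gives
\[
\sum_{H\in \FF_{\bm{a}}} p(H) \;=\; Q\,\prod_{i\geq 3}\frac{(\nu^i/2i)^{a_i}}{a_i!},
\]
which is \eqref{eqn:fragment_cycles}.

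For the ``in particular'' claim, every simple fragment $H$ belongs to $\FF_{\bm{a}(H)}$, where $\bm{a}(H)$ records its cycle counts, so $p(H)$ is bounded above by the right-hand side of \eqref{eqn:fragment_cycles} evaluated at $\bm{a}(H)$. The empty fragment has $\bm{a}(H)=\bm{0}$, giving $p(\emptyset)=Q$. For any non-empty simple fragment, some $a_i(H)$ with $i\geq 3$ is positive; since $\nu<1$ forces $\nu^i/(2i)<1/6$ for every $i\geq 3$, the product factor is strictly less than $1$, so $p(H)<Q$.

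No genuine obstacle arises: the probabilistic substance (identifying the branching-process distribution via \Cref{lem:branching_process}) was already absorbed into the proof of \Cref{thm:proba_dist}, and the present lemma is little more than a repackaging via the conditioning relation of \Cref{cor:prob_frag}. The only subtlety worth articulating carefully is the equivalence between ``$b_1=b_2=0$ in the cycle profile'' and ``the summed fragment is simple'', which is immediate from the definition.
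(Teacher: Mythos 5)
Your argument is correct, but it routes through a different set of results than the paper's own proof. The paper proves the lemma directly: it conditions on the fragment of $\Gcal_n(\bd_n)$, writes the probability of the event ``exactly $a_i$ $i$-cycles for each $i\geq 3$'' as a total-probability sum $\sum_H q_n(H)$, uses tightness to exchange limit and sum, and then appeals to \Cref{thm:complex} to identify the nonvanishing terms. You instead take the identity \eqref{eq:frag_sum_aux2}---proved inside \Cref{thm:proba_dist} via the bijection with lexicographically labelled forests and the branching-process computation---specialise the profile to $b_1=b_2=0$, $b_k=a_k$ for $k\geq3$, and rescale termwise by $e^{\nu/2+\nu^2/4}$ using $p(H)=e^{\nu/2+\nu^2/4}p^*(H)$ for simple $H$ (a consequence of \Cref{cor:prob_frag} and $\authe=\aut$ for simple multigraphs). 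Both arguments are valid. Yours is shorter and purely algebraic once \eqref{eq:frag_sum_aux2} is in hand; the paper's version is self-contained at the probabilistic level and only invokes statements of earlier results, not an identity buried inside a proof, which is arguably cleaner to cite.

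One small imprecision worth tightening: you assert that after setting $b_1=b_2=0$ ``the set of summands coincides exactly with $\FF_{\bm{a}}$.'' It does not: $\FF_{\bm{a}}$ as defined in the lemma only constrains cycles of length $\geq3$, so it also contains non-simple fragments (with $1$- or $2$-cycles). The set of summands in \eqref{eq:frag_sum_aux2} with $b_1=b_2=0$ is the \emph{simple} fragments in $\FF_{\bm{a}}$. The conclusion survives because $p(H)=\lim_n\Pr(\Frag_n\simeq H)=0$ whenever $H$ is not simple ($\Frag_n$ is a subgraph of the simple graph $\Gcal_n(\bd_n)$), so the extra summands in $\FF_{\bm{a}}$ contribute nothing; you should say this explicitly rather than claim the index sets agree.
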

\begin{proof}
    Let $B_n$ be the event that $\Gcal_n(\bd_n)$ contains exactly $a_i$ $i$-cycles for each $i\geq 3$. By \Cref{cor:acyclic} it holds that
    \[
    \Pr(B_n) = Q
    \prod_{i\geq 3} \frac{(\nu^i/2i)^{a_i}}{a_i!} + o(1). 
    \]
    For each $H\in \FF$, let 
    \[
    q_n(H) = \Pr(B_n \mid \Frag_n\simeq H) \Pr(\Frag_n\simeq H).
    \]
    By the law of total probability $\Pr(B_n)= \sum_{H\in \FF} q_n(H)$. Moreover,
    observe that $q_n(H)\leq p_n(H)$ for all $H\in \FF$, so the sequence of maps
    $(H\mapsto q_n(H))_{n\in \NN}$ is tight. This way
    \begin{align}\label{eq:limit_qnH}
    \Ln \Pr(B_n) = \sum_{H\in \FF} \Ln q_n(H).
    \end{align}
    By \Cref{thm:complex}, we know that w.h.p. all cycles in $\Gcal_n(\bd_n)$ lie in $\Frag_n$. 
    
    This implies that $q_n(H)=p(H) + o(1)$ if $H\in \FF_{\bm{a}}$
    and $q_n(H)=o(1)$ otherwise. Using this in \eqref{eq:limit_qnH}, we obtain
    \eqref{eqn:fragment_cycles}. \par
    It remains to show that
    $p(H)$ is maximized at the empty fragment.
    Let $H\in \FF$ be non-empty, and let $\bm{a}=(a_i)_{i\geq 3}$ 
    be the sequence where $a_i$ is the number of $i$-cycles in $H$ for each
    $i\geq 3$. By \Cref{eqn:fragment_cycles}, 
    \[
    p(H)\leq
    Q \prod_{i\geq 3} \frac{(\nu^i/2i)^{a_i}}{a_i!}.\]
    However, as $\nu<1$, the expression on the right is at most
    $Q$, which is the probability
    of the empty fragment. This completes the proof. 
    \end{proof}

For the remainder of this subsection, we number the fragments in 
$\FF$ as $H_1, H_2,\dots$ in such a way that $p(H_i)\geq p(H_j)$ for all $i<j$. For convenience we define $p_i=p(H_i)$.
For each $i>1$, let
$k=k(i)$ be the number satisfying
\begin{equation}\label{eq:sandwich_pi}
\frac{Q \nu^k}{2k} \geq p_i > \frac{Q \nu^{k+1}}{2(k+1)},    
\end{equation}
where $Q = Q(\nu) = \sqrt{1-\nu}\cdot e^{\nu/2 + \nu^2/4}$ as in last lemma.
We impose the condition $i>1$, because $H_1$ corresponds to the empty fragment and $p_1=Q$ by \Cref{cor:acyclic}, so $k(1)$ is not well-defined. Observe that
\Cref{lem:fragment_cycles} implies $k(i)\geq 3$ for all $i>1$.
Finally, the probabilities $p_i$ are 
non-increasing by definition and have limit zero, so $k(i)$ is non-decreasing and tends to infinity with $i\to \infty$. \par

\begin{lemma}
\label{lem:finite_gaps}
Assume $\nu<1$. Then $\overline{L(\bd)}$ is a finite union of 
intervals in $[0,1]$.
\end{lemma}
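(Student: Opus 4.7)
By \Cref{thm:partial_sums}, $\overline{L(\bd)}$ coincides with the set of partial sums of the convergent series $\sum_{i\ge 1} p_i$, where the sequence $(p_i)$ is nonincreasing and positive. By Kakeya's Criterion (\Cref{ch:binom_lims.kakeya}), it suffices to establish
\[
p_i \le \sum_{j>i} p_j \qquad \text{for all sufficiently large } i.
\]
Fix such an $i$ and let $k=k(i)$, so that $\tfrac{Q\nu^{k+1}}{2(k+1)} < p_i \le \tfrac{Q\nu^k}{2k}$; note that $k\to\infty$ as $i\to\infty$.

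The guiding idea, following the outline in \Cref{sec:intro}, is to bound the tail by grouping fragments according to their cycle structure rather than treating them individually. For any $\bm{a}=(a_j)_{j\ge 3}$, \Cref{lem:fragment_cycles} supplies the class sum
\[
S_{\bm{a}} := \sum_{H\in\FF_{\bm{a}}} p(H) = Q\prod_{j\ge 3}\frac{(\nu^j/2j)^{a_j}}{a_j!},
\]
which in particular implies $p(H)\le S_{\bm{a}}$ for every $H\in\FF_{\bm{a}}$. Combining this with \Cref{thm:proba_dist} (or directly from the generating-function identity $\sum_{\bm{a}}\prod_j(\nu^j/2j)^{a_j}/a_j! = \exp\bigl(\sum_{j\ge 3}\nu^j/(2j)\bigr) = 1/Q$) yields the total-mass identity $\sum_{\bm{a}}S_{\bm{a}}=1$.

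The strategy is then to produce a family $\Ucal\subseteq\{H_j:j>i\}$ whose total probability is at least $p_i$. We take
\[
\Ucal := \bigcup_{\bm{b}\,:\,S_{\bm{b}}<p_i}\FF_{\bm{b}},
\]
so that every $H\in\Ucal$ has $p(H)\le S_{\bm{b}}<p_i$ and therefore lies strictly after $H_i$ in the ordering. The contribution of $\Ucal$ equals $\sum_{\bm{b}:\,S_{\bm{b}}<p_i}S_{\bm{b}} = 1-\sum_{\bm{b}:\,S_{\bm{b}}\ge p_i}S_{\bm{b}}$, and the Kakeya inequality will follow once we show this quantity is at least $p_i$. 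To produce cycle structures in the tail we exploit the ratio identity $S_{\bm{a}+e_m}/S_{\bm{a}} = (\nu^m/2m)/(a_m+1)$: for every $m\ge k+1$ and every $\bm{a}$ with $S_{\bm{a}}\le Q$, this ratio is strictly less than $p_i/S_{\bm{a}}$, so $\bm{a}+e_m$ belongs to the tail.

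The main obstacle lies precisely in the tail estimate $\sum_{\bm{b}:\,S_{\bm{b}}<p_i}S_{\bm{b}}\ge p_i$. A naive computation using only the one-parameter family of extensions $\{\bm{a}_i+e_m:m\ge k+1\}$ yields a lower bound of order $S_{\bm{a}_i}\cdot \nu^{k+1}/(2(k+1)(1-\nu))$, which is roughly $p_i\cdot \nu/(1-\nu)$ and therefore falls short of $p_i$ whenever $\nu<1/2$. To overcome this the argument must aggregate tail contributions across many base cycle structures simultaneously, using the explicit Poissonian product form of $S_{\bm{a}}$ (each $S_{\bm{a}}$ is the joint pmf value of independent Poissons $X_j\sim\Po(\nu^j/2j)$, $j\ge 3$) together with the bound on the number of $\bm{b}$ with $S_{\bm{b}}\ge p_i$. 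Plugging the resulting tail estimate into the strategy above completes the verification of Kakeya's criterion and hence of the lemma.
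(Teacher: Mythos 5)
There is a genuine gap: you set up the correct framework but stop exactly at the step that requires the actual work, replacing it with a description of what a proof \emph{should} do rather than a proof.

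Concretely, your reduction to Kakeya's criterion and your choice $\Ucal = \bigcup_{\bm{b}:S_{\bm{b}}<p_i}\FF_{\bm{b}}$ are both sound, and you correctly compute that the required inequality becomes $\sum_{\bm{b}:S_{\bm{b}}<p_i}S_{\bm{b}}\ge p_i$. You also correctly diagnose that summing over the one-parameter family obtained by adding a single long cycle gives only $\approx p_i\cdot\nu/(1-\nu)$, which fails for $\nu<1/2$. But the last paragraph --- ``to overcome this the argument must aggregate tail contributions across many base cycle structures simultaneously, using the Poissonian product form together with a bound on the number of $\bm b$ with $S_{\bm b}\ge p_i$'' --- is not an argument; it does not say which cycle structures to aggregate, does not give the resulting lower bound, and it is not evident that counting the $\bm b$ with $S_{\bm b}\ge p_i$ is tractable or leads anywhere. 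The paper closes precisely this gap by an explicit construction: for $3\le\ell\le\lfloor(k+1)/2\rfloor$ consider fragments consisting of exactly one $\ell$-cycle and one $(k-\ell+1)$-cycle, verify via \Cref{lem:fragment_cycles} that each such class sum $S_{\bm b}=Q\nu^{k+1}/\bigl(4\ell(k-\ell+1)\bigr)$ (or half that when $\ell=(k+1)/2$) is below $Q\nu^{k+1}/(2(k+1))\le p_i$ so it lies in the tail, and then sum over $\ell$ to obtain a lower bound of order $\frac{Q\nu^{k+1}}{8k}\sum_{\ell=3}^{k-2}\frac{1}{\ell}$. The divergence of the harmonic series makes this exceed $Q\nu^{k}/(2k)\ge p_i$ once $k(i)$ is large enough, which is exactly what licences the ``for $i$ large enough'' clause in Kakeya's criterion. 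That harmonic-series idea is what your proposal is missing; without a concrete replacement for it, the proof is not complete.
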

\begin{proof}
Let $i_0$ be the smallest index $i>1$ for which
$
\sum_{j=3}^{k(i)-2} (1/j) \geq 4/\nu 
$, which exists as the harmonic series diverges.
We prove that 
$p_i\leq \sum_{j>i} p_j$ for any $i\geq i_0$. By Kakeya's Criterion, this implies the result. Let $i>i_0$, $k=k(i)$.
For each $3\leq \ell \leq \lfloor\frac{k+1}{2}\rfloor$, 
let $\FF_\ell$ be the set of unlabeled fragments containing an $\ell$-cycle, 
a $(k-\ell + 1)$-cycle, and no other cycle. 
By \Cref{lem:fragment_cycles}, it holds that 
\begin{equation}\label{eq:two_cycles}
    \sum_{H\in \FF_\ell} p(H)=
    \begin{cases}
       Q'/2 &\text{if $k$ is odd and } \ell=\frac{k+1}{2},\\
       Q'  &\text{otherwise},

    \end{cases}
\end{equation}
where $Q'=\frac{Q \nu^{k+1}}{4\ell(k-\ell+1)}$.
For any $\ell$, this sum is at most $\frac{Q \nu^{k+1}}{2(k+1)}$, which is at most $p_i$ by \eqref{eq:sandwich_pi}. In particular, $p(H)<p_i$ for all $H\in \FF_\ell$ and
\[\bigcup_{\ell=3}^{\lfloor\frac{k+1}{2}\rfloor}\FF_\ell \subset
\{ H_j \mid  j>i \}.\]

By the choice of $i_0$ and $i\geq i_0$, and by \eqref{eq:sandwich_pi}, we obtained the desired condition
\begin{align*}
    \sum_{j>i} p_j \geq &
    \sum_{\ell=3}^{\lfloor\frac{k+1}{2}\rfloor} \sum_{H\in\FF_\ell} p(H) \\
    &= 
    \frac{Q\,\nu^{k+1}}{8}
    \sum_{\ell=3}^{k-2} 
    \frac{1}{\ell(k-\ell+1)} \\
    &\geq
    \frac{Q\,\nu^{k+1}}{8k}\sum_{\ell=3}^{k-2} \frac{1}{\ell}\\ &\geq
    \frac{Q\, \nu^{k}}{2k}\geq p_i.\qedhere
    \end{align*}

\end{proof}

\section{Second part of \texorpdfstring{\Cref{thm:main}: Phase transition at \texorpdfstring{$\nu_0$}{v0}}.}

\label{sec:transition}

We recall that $\nu_0$ is defined as the unique root in $[0,1]$ of
\begin{equation}
    \sqrt{1-\nu}\cdot e^{\frac{\nu}{2} + \frac{\nu^2}{4}}=1/2.
\end{equation}

\begin{lemma}
\label{lem:transition}
The following hold.
\begin{enumerate}
    \item[(1)] if $0< \nu <\nu_0$, then $\overline{L(\bd)}$ has at least one gap, and
    \item[(2)] if $\nu\geq \nu_0$, then $\overline{L(\bd)}=[0,1]$. 
\end{enumerate}
\end{lemma}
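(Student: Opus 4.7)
For part (1), I would use the ``only if'' direction of Kakeya's criterion. By \Cref{cor:prob_frag} together with the maximality statement at the end of \Cref{lem:fragment_cycles}, the largest fragment probability is $p_1 = p(\emptyset) = Q$, where $Q = \sqrt{1-\nu}\,e^{\nu/2+\nu^2/4}$. When $\nu < \nu_0$ we have $Q > 1/2 > 1-Q$, so any partial sum $\sum_{H \in \Ucal} p(H)$ either contains $p_1$ and is $\geq Q$, or does not and is $\leq 1-Q$; the open interval $(1-Q, Q)$ is therefore missed. By \Cref{thm:partial_sums}, this interval lies outside $\overline{L(\bd)}$.

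For part (2), assume $\nu \geq \nu_0$, so $Q \leq 1/2$. My plan is to verify Kakeya's condition (1) at every index, but via a coarsening that removes dependence on the detailed degree distribution. I would group fragments by their cycle sequence $\bm{a}=(a_k)_{k\geq 3}$; by \Cref{lem:fragment_cycles}, the class probability is $q_{\bm{a}} := Q\prod_{k\geq 3}(\nu^k/2k)^{a_k}/a_k!$, which depends only on $\nu$, and $\sum_{\bm{a}} q_{\bm{a}} = 1$. Every partial sum of $(q_{\bm{a}})$ is a partial sum of $(p(H))_{H \in \FF}$ (take $\Ucal = \bigcup_{\bm{a} \in A}\FF_{\bm{a}}$), so by \Cref{thm:partial_sums} it suffices to show the set of partial sums of the decreasing rearrangement $y_1 \geq y_2 \geq \cdots$ of $(q_{\bm{a}})$ equals $[0,1]$, and then Kakeya's criterion applies.

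I would split the verification into two cases. At $i = 1$, $y_1 = Q \leq 1/2 \leq 1-Q = \sum_{j > 1} y_j$ directly. For $i \geq 2$, the maximum of $q_{\bm{a}}$ over $\bm{a} \neq 0$ is $Q\nu^3/6$ (attained by the single $3$-cycle, since $\nu^k/(2k)$ is strictly decreasing in $k$ and multi-cycle classes only add further factors $\leq 1$), so $y_i \leq Q\nu^3/6$, and one can define $k = k(i) \geq 3$ by $Q\nu^k/(2k) \geq y_i > Q\nu^{k+1}/(2(k+1))$. Since $Q\nu^m/(2m) \leq Q\nu^{k+1}/(2(k+1)) < y_i$ for every $m > k$, all these single-cycle atoms lie in $\{y_j : j > i\}$, yielding
\[
\sum_{j > i} y_j \;\geq\; \sum_{m > k} \frac{Q\nu^m}{2m}.
\]

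The crux then reduces to the algebraic inequality $\sum_{m > k}\nu^m/m \geq \nu^k/k$, uniformly in $k \geq 3$ and $\nu \geq \nu_0$. The crude lower bound $\sum_{m > k}\nu^m/m \geq \sum_{m=k+1}^{2k}\nu^m/(2k) = \nu^{k+1}(1-\nu^k)/(2k(1-\nu))$ reduces this to $\nu + \nu^2 + \cdots + \nu^k \geq 2$, and since $k \geq 3$ it suffices to show $\nu + \nu^2 + \nu^3 \geq 2$, which by AM-GM follows from $3\nu^2 \geq 2$, i.e.\ $\nu \geq \sqrt{2/3}$. A direct computation gives $Q(\sqrt{2/3}) = \sqrt{1-\sqrt{2/3}}\,e^{\sqrt{2/3}/2 + 1/6} \approx 0.76 > 1/2$, and since $\tfrac{d \ln Q}{d\nu} = -\tfrac{\nu^2}{2(1-\nu)} < 0$ on $(0,1)$, $Q$ is strictly decreasing, forcing $\nu_0 > \sqrt{2/3}$ and completing the verification. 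I expect the main difficulty to be identifying the right level of coarsening: working directly with the individual $p(H)$'s is intractable because they depend on the full degree sequence through the moments $\lambda_i$, but coarsening all the way to the total number of cycles gives a $\mathrm{Pois}(-\ln Q)$ law whose Kakeya condition already fails at $Q = 1/2$ (the atom at $M=1$ has mass $(\ln 2)/2 \approx 0.347$, larger than the tail mass $\approx 0.153$), so the cycle-sequence coarsening is the correct intermediate level that retains just enough structure.
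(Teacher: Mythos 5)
Your argument for part (1) matches the paper's: the empty fragment carries mass $Q>1/2$ when $\nu<\nu_0$, so the interval $(1-Q,Q)$ is missed, and \Cref{thm:partial_sums} yields the gap.

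For part (2) in the range $\nu_0\leq\nu<1$ your proof is correct but takes a somewhat different route. The paper applies Kakeya's criterion to the decreasing rearrangement of the individual fragment probabilities $p(H)$; for $i>1$ it sandwiches $p_i$ between $Q\nu^{k}/(2k)$ and $Q\nu^{k+1}/(2(k+1))$ and bounds $\sum_{j>i}p_j\geq Q\sum_{\ell>k}\nu^\ell/(2\ell)\geq\tfrac{Q\nu^k}{2k}\sum_{m\geq1}(\nu k/(k+1))^m$, which exceeds $p_i$ once $\nu k/(k+1)\geq 9/16$; since $k\geq 3$, this needs only $\nu\geq 3/4$, and the paper simply asserts $\nu_0\geq 3/4$. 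You instead apply Kakeya to the coarser cycle-class masses $q_{\bm{a}}$, which is legitimate because every partial sum of $(q_{\bm{a}})$ is a partial sum of $(p(H))$ and \Cref{thm:partial_sums} still controls $\overline{L(\bd)}$. Your finite-block estimate $\sum_{m>k}\nu^m/m\geq\nu^{k+1}(1-\nu^k)/(2k(1-\nu))$ reduces to $\nu+\nu^2+\nu^3\geq 2$ and hence requires $\nu\geq\sqrt{2/3}\approx 0.8165$ — a slightly stronger requirement on $\nu$ than the paper's $3/4$, but still below $\nu_0$. Your derivation $\nu_0>\sqrt{2/3}$ from $\tfrac{\mathrm{d}}{\mathrm{d}\nu}\ln Q=-\nu^2/(2(1-\nu))<0$ together with $Q(\sqrt{2/3})>1/2$ is sound and actually more self-contained than the paper's bare numerical claim. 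Your closing remark on why the maximal coarsening (total cycle count, a $\mathrm{Pois}(-\ln Q)$ law) already fails Kakeya at $Q=1/2$ is a good sanity check of the intermediate coarsening level.

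The genuine gap is the case $\nu\geq 1$. Your part (2) argument silently assumes $\nu<1$: $Q(\nu)=\sqrt{1-\nu}\,e^{\nu/2+\nu^2/4}$ is not even defined for $\nu>1$, and \Cref{thm:partial_sums}, \Cref{lem:fragment_cycles}, and \Cref{cor:prob_frag} all explicitly require $\nu<1$. At and beyond the critical point, the fragment is a.a.s.\ unbounded, complex components appear, $p_{\acyc}(\bd)=0$, and $\overline{L(\bd)}$ can no longer be described as a set of partial sums of fragment probabilities. Since $\nu_0<1$, the regime $\nu\geq 1$ falls squarely inside the hypothesis of part (2) and needs a separate argument; the paper handles it by invoking the analogue for the binomial model, which works directly with the joint Poisson limit of short-cycle counts from \Cref{sec:cycles} rather than the fragment decomposition. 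You should at minimum flag this case and indicate how to treat it.
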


\begin{proof}
The case $\nu\geq 1$ can be proven exactly as in \cite[Section 3.1]{larrauriLimitingProbabilitiesFirst2022}, using our results about the distribution of small cycles in $\Gcal_n(\bd_n)$ described in \Cref{sec:cycles}. We thus assume that $\nu<1$.

As in the previous subsection, let $H_1,H_2,\dots$
be an enumeration of the class of fragments $\FF$ satisfying 
$p(H_1)\geq p(H_2) \geq \dots$, and let $p_i= p(H_i)$ for all $i\geq 1$. 
By Kakeya's Criterion, $\overline{L(\bd)}=[0,1]$ if and only if
\begin{equation}
    \label{sec:subcritical.eq:kakeya}
   p_i\leq \sum_{j>i} p_j,
\end{equation}
for all $i\geq 1$.

We first show (1).
Recall that $\nu_0$ is defined as the only solution to $Q(\nu_0)=1/2$, which lies in the interval
$[0,1]$.
As $Q(\nu)$ is monotonically decreasing in $[0,1]$ (see~\eqref{eq:alter_acyclic}) and $0<\nu<\nu_0$, 
it holds that $Q(\nu)> 1/2$. Recall that $H_1$
corresponds to the empty fragment. By~\Cref{lem:fragment_cycles}, 
$$
p_1= Q> 1/2 > 1-Q = \sum_{j>1} p_j.
$$ 
and \eqref{sec:subcritical.eq:kakeya} does not hold for $i=1$, which implies that $\overline{L(\bd)}$ contains at least one gap.
\par
Now we proceed to show (2). In this case, $\nu_0\leq \nu < 1$, and \eqref{sec:subcritical.eq:kakeya} holds for $i=1$, because $Q\leq 1/2$. We show that \eqref{sec:subcritical.eq:kakeya}  holds for $i>1$ as well. Fix $i>1$ and let $k=k(i)$.
For all $\ell\geq 3$, we define $\FF_\ell$ as the set of unlabeled fragments containing an $\ell$-cycle and no other cycles. By~\Cref{lem:fragment_cycles},
$\sum_{H\in \FF_\ell} p_H=Q\nu^\ell/(2\ell)$. We have
\begin{equation}\label{eq:bounds_on_pj}
\sum_{j>i} p_j \geq 
\sum_{\ell > k} \sum_{H\in \FF_\ell}
p_H = Q \sum_{\ell> k} \frac{\nu^\ell}{2\ell}\geq 
\frac{Q\nu^{k}}{2k}\sum_{m \geq 1} \left(
\frac{\nu k}{k+1} 
\right)^m,
\end{equation}
where the last inequality follows from 
the fact that if $a_\ell=\nu^\ell/(2\ell)$, then $a_{\ell+1}\geq \frac{\nu k}{k+1}\,a_\ell$ for all $\ell\geq k$. Since $k(i)\geq 3$ for all $i> 1$, then $\frac{k}{k+1}\geq 3/4$. Note that $\nu_0 \geq 3/4$, so the {\sc LHS} of \eqref{eq:bounds_on_pj}  can be bounded as 
\[
\sum_{j>i} p_j \geq 
\frac{Q\nu^{k}}{2k}\sum_{m \geq 1} (3/4)^{2m} = \frac{9}{7}\cdot \frac{Q\nu^k}{2k}>\frac{Q\nu^k}{2k}\geq p_i,
\]
where we used \eqref{eq:sandwich_pi} in the last step. The criterion implies that $\overline{L(\bd)}=[0,1]$.

\end{proof}

\section{Remarks About the Convergence Law}\label{sec:convergence_law}

In this section we discuss the convergence law studied by Lynch in~\cite{lynchConvergenceLawRandom2005,lynchConvergenceLawRandom2003a}. His main result states that, under some conditions on the asymptotic degree sequence $\bd$, the limit of $\Pr(\Gcal_n(\bd_n)\text{ satisfies }   \phi)$ exists for any $\fo$ sentence $\phi$. We note that Lynch's requirements on $\bd$ are non-comparable with~\Cref{assump:main}.

\begin{definition}
    We call an asymptotic degree sequence $\bd$ \textit{smooth} if it satisfies conditions (i), (ii) and (iv) from~\Cref{assump:main}, as well as the following weakening of (iii):
    \[
    \lim_{n\to \infty} 
    \E{D_n} \text{ exists, is bounded and equals }  \E{D}.
    \]
\end{definition}

Both papers~\cite{lynchConvergenceLawRandom2005,lynchConvergenceLawRandom2003a} deal with smooth degree sequences. However, there is no condition on the convergence of $\E{D_n^2}$ to a finite quantity. Instead, this is replaced by a bound on the maximum degree: the existence of a cutoff function $\omega(n)$ satisfying $\Delta(n)\leq \omega(n)$. In~\cite{lynchConvergenceLawRandom2003a}, $\omega(n)=n^\alpha$, where $\alpha<1/4$, while in~\cite{lynchConvergenceLawRandom2005},  $\omega(n)$ was sub-polynomial (that is, $\omega(n)=o(n^\alpha)$ for all $\alpha>0$). 
Observe that neither cutoff is enough to guarantee that $\E{D_n^2}$ converges to a finite quantity (in fact, no diverging cutoff function is enough). This is relevant because of next result.

\begin{lemma}
 Let $\bd=\bd_n$ be a smooth asymptotic degree sequence with $\E{D^2_n}\to \infty$ as $n\to \infty$. Then $\Conf_n(\bd_n)$ a.a.s. contains a loop. 
\end{lemma}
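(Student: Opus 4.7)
The plan is to apply the first and second moment method to $L_n$, the number of loops in $\Conf_n(\bd_n)$, and conclude via Chebyshev that $\Pr(L_n=0)\to 0$.

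First, I would compute $\E{L_n}$ exactly. For each vertex $v$ and each unordered pair of half-edges $\{h_1,h_2\}$ incident to $v$, the probability that $\{h_1,h_2\}$ is an edge of the uniform random matching on $[m_n]$ equals $1/(m_n-1)$. Summing over vertices,
\[
\E{L_n}\;=\;\sum_{v\in[n]}\binom{d_v}{2}\frac{1}{m_n-1}\;=\;\frac{n(\E{D_n^2}-\E{D_n})}{2(m_n-1)}\;=\;\frac{\E{D_n^2}-\E{D_n}}{2(\E{D_n}-1/n)}.
\]
Since $\bd$ is smooth, $\E{D_n}\to\E{D}$, and $\E{D}>0$: otherwise $D\equiv 0$ almost surely, and condition (iv) would force $D_n\equiv 0$ for all $n$, contradicting $\E{D_n^2}\to\infty$. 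Together with the hypothesis $\E{D_n^2}\to\infty$, this yields $\E{L_n}\to\infty$.

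Next I would bound the variance. Index potential loops by pairs $e=\{h_1,h_2\}$ of half-edges incident to a common vertex, and let $Z_e$ indicate that $e$ belongs to the random matching. Then $L_n=\sum_e Z_e$, and for two potential loops $e,e'$ one has $\E{Z_eZ_{e'}}=1/(m_n-1)$ if $e=e'$, $\E{Z_eZ_{e'}}=0$ if $e\neq e'$ share a half-edge, and $\E{Z_eZ_{e'}}=1/((m_n-1)(m_n-3))$ if $e,e'$ are disjoint. Writing $S_2=\sum_v\binom{d_v}{2}$, so that $\E{L_n}=S_2/(m_n-1)$, a direct expansion of $\E{L_n^2}$ and cancellation with $\E{L_n}^2=S_2^2/(m_n-1)^2$ produces
\[
\Var{L_n}\;\leq\;\E{L_n}\;+\;\frac{2S_2^2}{(m_n-1)^2(m_n-3)}\;=\;\E{L_n}\;+\;\frac{2}{m_n-3}\,\E{L_n}^2,
\]
after dropping the nonpositive diagonal corrections from same-vertex pairs.

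Finally, dividing by $\E{L_n}^2$ gives $\Var{L_n}/\E{L_n}^2\le 1/\E{L_n}+2/(m_n-3)$, which tends to $0$ since $\E{L_n}\to\infty$ and $m_n=n\E{D_n}\to\infty$. Chebyshev's inequality then yields
\[
\Pr(L_n=0)\;\leq\;\Pr(|L_n-\E{L_n}|\geq \E{L_n})\;\leq\;\frac{\Var{L_n}}{\E{L_n}^2}\;\longrightarrow\;0,
\]
so $\Conf_n(\bd_n)$ contains a loop a.a.s. The main obstacle is the bookkeeping in the second-moment calculation, namely, carefully identifying the leading-order cancellation between $\E{L_n(L_n-1)}$ and $\E{L_n}^2$ so as to extract the $1/(m_n-3)$ factor (which is what ultimately makes the variance method succeed irrespective of how mass is distributed among vertices of high degree).
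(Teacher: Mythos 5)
Your proposal is correct and takes essentially the same route as the paper: the paper's sketch computes $\E{X_n}\to\infty$ via the first moment and then asserts $\Var{X_n}=o(\E{X_n}^2)$, invoking the second moment method, which is exactly the Chebyshev argument you flesh out. Your computation of $\E{L_n}$, the observation that $\E{D}>0$ (via Assumption (iv), so $m_n=n\E{D_n}\to\infty$), and the variance bound $\Var{L_n}\le \E{L_n}+\tfrac{2}{m_n-3}\E{L_n}^2$ are all correct and supply the details the paper leaves implicit.
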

\begin{proof}[Sketch of the proof]
Let $X_n$ count the number of loops in $\Conf_n(\bd_n)$. Then 
\[
\E{X_n}= \frac{1}{2}\sum_{v\in [n]} \frac{d_v (d_v-1)}{2m_n -1}=\frac{1}{2}\cdot\frac{\mmnt_{n,2}}{\mmnt_{n,1} - 1/n}.
\]
Using that $\mmnt_{n,2}$ diverges and $\bd$ is smooth we get that $\E{X_n}\to \infty$. The result follows from proving that $\Var{X_n}= o(\E{X_n^2})$ and using the second moment method. 
\end{proof}

The approach followed in~\cite{lynchConvergenceLawRandom2005,lynchConvergenceLawRandom2003a} consists of proving a {\sc FO} 
convergence law for the multigraph
$\Conf_n(\bd_n)$, and transferring this result to $\Gcal_n(\bd_n)$ afterwards by conditioning $\Conf_n(\bd_n)$ to the event of being simple. In order to make this work, it was taken for granted that the probability that $\Conf_n(\bd_n)$ is simple was bounded away from zero. However, because of last lemma, this is not true unless the second moment $\E{D^2_n}$ is bounded. \par

However, we claim that a $\Gcal(\bd)$ satisfies a {\sc FO} convergence law whenever $\bd$ follows~\Cref{assump:main}. Our aim is not to give a full proof of this statement, but we sketch it in the rest of the section. 

In~\cite{lynchsparse1992} Lynch showed that the binomial graph $\Gcal_n(p_n)$ satisfies a {\sc FO} convergence law when $p_n\sim c/n$ for any real constant $c>0$. Informally, this follows from three facts about $\Gcal_n(c/n)$. The \textit{$r$-core}
$\Core_r(G)$ of a graph $G$ is the graph induced by the $r$-neighborhood of all its cycles of size at most $2r+1$, and define $\Core_{n,r}$ as the $r$-core of $\Gcal_n(c/n)$. Then for any fixed $r>0$ the following hold:
\begin{enumerate}
    \item[(I)] w.h.p. any rooted tree of height at most $r$ appears as the $r$-neighbourhood of some vertex in $\Gcal_n(c/n)$ more than $K$ times, for any fixed integer $K>0$,
    \item[(II)] w.h.p. $\Core_{n,k}$ is a disjoint union of unicyclic graphs (i.e., a fragment),
    \item[(III)] and $\Core_{n,r}$ has a well-defined asymptotic distribution.
\end{enumerate}

We claim that the same three facts hold true in the multigraph $\Conf_n(\bd_n)$, with small changes. Let $\Core_{n,r}^*$ denote the $r$-core of $\Conf_n(\bd_n)$.

For Fact (I) we consider only rooted trees without \textit{forbidden degrees} according to $\bd$: those that do not have vertices of degree $k$, where $k$ satisfies the condition (iv) of \Cref{assump:main}. To see that this holds it is enough to observe that the $r$-neighbourhood of an arbitrary tuple of vertices in $\Conf_n(\bd_n)$ converges in distribution to a branching process with as many roots as vertices in the tuple, whose root offspring distribution is $D$ and general offspring distribution is $\widehat{D}$. Alternatively, one can use the second moment to show that there are many copies of any valid tree; this is precisely the part of the proof that requires condition (iv).

Fact (II) follows from a simple first-moment argument: a multigraph consisting of two small cycles that intersect has positive excess. The same holds true for two cycles joined by a short path. The proof is as~\Cref{thm:complex} but simpler: as we bound the size of the cycles and the paths, then there is only a finite number of forbidden configurations to be considered.

Fact (III) is the more convoluted one, we sketch the argument in what follows. The small-cycle distribution of $\Conf_n(\bd_n)$ converges to a vector of independent Poisson random variables, as shown in~\Cref{thm:cycle_distribution}. Consider an arbitrary disjoint union of cycles $H$, each of size at most $2r+1$. Let $A_{H}=A_{n,H}$ be the event that the union of cycles of size at most $2r+1$ in $\Conf_n(\bd_n)$ is isomorphic to $H$, and let $\Conf_n^H(\bd)$ denote $\Conf_n(\bd_n)$ conditioned on that event. Now, let $v_1,\dots, v_\ell$ be the fixed vertices lying on the $H$-copy of $\Conf^H_n(\bd)$. Delete the edges of $H$ and denote by $F$ the $r$-neighbourhood of $v_1,\dots, v_\ell$ in the resulting multigraph. Then $F$ converges in distribution to the first $r$-generations of a multi-rooted branching process with $\ell$ roots, offspring distribution $\hat{D}$ and root offspring distribution $\tilde{D}$ given by 
$$
\Pr(\tilde{D}=i-2) = \frac{i(i-1)\Pr(D=i)}{\mmnt_2}.
$$ 
Indeed, for each vertex in $H$, which correspond to the roots of $F$, we delete two edges. 
This shows that $\Core_{n,r}^*$ converges in distribution to a random fragment where the cycle counts are given by appropriate Poisson distributions, and the trees that grow out of the
cycles follow the distribution given by the branching process described above, proving (III).
Compare this with the interpretation of the limit distribution of the fragment obtained in \Cref{thm:proba_dist}, and observe the similarities. A random fragment $H$ following that distribution is constructed as follows: First generate the set of cycles of $H$, letting the number of $k$-cycles in $H$ follow a Poisson random variable with parameter $\nu^k/2k$, independently for each $k\geq 1$. Afterwards, attach to each vertex lying on a cycle an independent copy of the branching process with offspring distribution $\hat{D}$ and root offspring distribution $\tilde{D}$. In the setting of \Cref{thm:proba_dist}, the generated fragment was guaranteed to be finite because $\nu<1$. Here we do not have this assumption, but instead we bound the maximum size of the generated cycles to $2r+1$, and only consider the first $r$ generations of each branching process. 
\par

Facts (I), (II), and (III) for $\Conf(\bd)$ show a $\fo$-convergence law in the configuration model in the same fashion as shown in~\cite{lynchsparse1992}. Let $\varphi$ be a $\fo$-sentence, let $k$ be its quantifier rank and let $r= (3^k-1)/2 $.
Let $\Omega_r$ be the set of unlabeled fragments consisting of cycles of size at most $2r+1$ with trees of height at most $r$ attached to them.
Using pebble games 
and Fact (I) one can show that
\begin{equation}\label{eq:01law}
\Ln \Pr(\Conf_n(\bd_n) \text{ satisfies } \varphi \, \mid \,
\Core_{n,r}^*\simeq H) \in \{0,1\},    
\end{equation}
for any $H\in \Omega_r$. Facts (II) and (III) show that $\Core_{n,r}^*$ converges in distribution to a random graph from $\Omega_r$. Hence,
\begin{align*}
\Ln &\Pr(\Conf_n(\bd_n) \text{ satisfies } \varphi) = \\ 
&
\sum_{H\in \Omega_r} \Ln \Pr(\Conf_n(\bd_n) \text{ satisfies } \varphi \, \mid \,
\Core_{n,r}^*\simeq H)\cdot \Pr(\Core_{n,r}^*\simeq H).
\end{align*}
In the last sum, each of the first factors converge to either zero or one by \eqref{eq:01law}, and the right factors converge to a fixed probability by (III). This shows that the probability $\Conf_n(\bd_n)$ satisfies $\varphi$ converges, as desired.

\printbibliography

@article{benderAsymptoticNumberLabeled1978,
  title = {The Asymptotic Number of Labeled Graphs with given Degree Sequences},
  author = {Bender, E. A. and Canfield, E. R.},
  year = {1978},
  month = may,
  journal = {Journal of Combinatorial Theory, Series A},
  volume = {24},
  number = {3},
  doi = {10.1016/0097-3165(78)90059-6}
}

@article{bollobas1980probabilistic,
  title={A probabilistic proof of an asymptotic formula for the number of labelled regular graphs},
  author={Bollob{\'a}s, B.},
  journal={European Journal of Combinatorics},
  volume={1},
  number={4},
  pages={311--316},
  year={1980},
  publisher={Elsevier}
}

@book{bollobas2001random,
  title = {Random {{Graphs}}},
  author = {Bollob{\'a}s, B.},
  year = {2001},
  series = {Cambridge {{Studies}} in {{Advanced Mathematics}}},
  edition = {Second},
  publisher = {{Cambridge University Press}},
  address = {{Cambridge}},
  doi = {10.1017/CBO9780511814068}
}

@article{bollobas1986number,
  title={The number of matchings in random regular graphs and bipartite graphs},
  author={Bollob{\'a}s, B. and McKay, B.D.},
  journal={Journal of Combinatorial Theory, Series B},
  volume={41},
  number={1},
  pages={80--91},
  year={1986},
  publisher={Elsevier}
}

@misc{Bordenave,
  title = {Lecture notes on random graphs and probabilistic combinatorial optimization.},
  author = {Bordenave, C.},
  year = {2016},
  howpublished = {https://www.math.univ-toulouse.fr/~bordenave/coursRG.pdf}
}

@article{dhara2017phase,
  title={Phase transitions of extremal cuts for the configuration model},
  author={Dhara, S. and Mukherjee, D. and Sen, S.},
  journal={Electron. J. Probab},
  volume={22},
  number={86},
  pages={1--29},
  year={2017}
}

@article{ER60,
  title={On the evolution of random graphs},
  author={Erd{\H{o}}s, P. and R{\'e}nyi, A.},
  journal={Publ. math. inst. hung. acad. sci},
  volume={5},
  number={1},
  pages={17--60},
  year={1960}
}

@article{faginProbabilitiesFiniteModels1976,
  title = {Probabilities on {{Finite Models}}},
  author = {Fagin, R.},
  year = {1976},
  journal = {The Journal of Symbolic Logic},
  volume = {41},
  number = {1},
  publisher = {{Association for Symbolic Logic}},
  doi = {10.2307/2272945}
}

@article{glebskiiRangeDegreeRealizability1969,
  title = {Range and Degree of Realizability of Formulas in the Restricted Predicate Calculus},
  author = {Glebskii, Y. V. and Kogan, D. I. and Liogon'kii, M. I. and Talanov, V. A.},
  year = {1969},
  month = mar,
  journal = {Cybernetics},
  volume = {5},
  number = {2},
  doi = {10.1007/BF01071084},
  keywords = {Artificial Intelligence,Operating System,Predicate Calculus,Restricted Predicate Calculus,System Theory}
}

@article{hatami2012scaling,
  title={The scaling window for a random graph with a given degree sequence},
  author={Hatami, H. and Molloy, M.},
  journal={Random Structures \& Algorithms},
  volume={41},
  number={1},
  pages={99--123},
  year={2012},
  publisher={Wiley Online Library}
}

@article{hmnt2018,
  title = {Logical Limit Laws for Minor-Closed Classes of Graphs},
  author = {Heinig, P. and M{\"u}ller, T. and Noy, M. and Taraz, A.},
  year = {2018},
  month = may,
  journal = {Journal of Combinatorial Theory, Series B},
  volume = {130},
  doi = {10.1016/j.jctb.2017.12.002},
  keywords = {Graph minors,Logical limit laws,Random graphs}
}

@article{janson2009new,
  title={A new approach to the giant component problem},
  author={Janson, S. and Luczak, M. J.},
  journal= {Random Structures \& Algorithms},
  volume={34},
  pages={197--216},
  year={2009}
}

@article{janson2009probability,
  title = {The Probability That a Random Multigraph Is Simple},
  author = {Janson, S.},
  year = {2009},
  volume = {18},
  pages = {205--225},
  publisher = {{{C}ambridge {U}niversity {P}ress}},
  doi = {10/bg4m2c},
  journal = {Combinatorics, Probability and Computing},
  number = {1-2}
}

@article{kakeya,
  title = {On the Partial Sums of an Infinite Series},
  author = {Kakeya, S.},
  year = {1914},
  journal = {Tohoku Sci. Rep},
  volume = {3},
  number = {4}
}

@article{larrauriLimitingProbabilitiesFirst2022,
  title = {Limiting Probabilities of First Order Properties of Random Sparse Graphs and Hypergraphs},
  author = {Larrauri, A. and M{\"u}ller, T. and Noy, M.},
  year = {2022},
  month = may,
  journal = {Random Structures \& Algorithms},
  volume = {60},
  number = {3},
  doi = {10.1002/rsa.21041}
}

@inproceedings{lynchConvergenceLawRandom2003,
  ids = {lynchConvergenceLawRandom2003a},
  title = {Convergence Law for Random Graphs with Specified Degree Sequence},
  booktitle = {18th {{Annual IEEE Symposium}} of {{Logic}} in {{Computer Science}}, 2003. {{Proceedings}}.},
  author = {Lynch, J.F.},
  year = {2003},
  month = jun,
  doi = {10.1109/LICS.2003.1210070},
  keywords = {Biological system modeling,Collaboration,Computer science,Convergence,Diseases,Internet telephony,IP networks,Power engineering and energy,Power systems,Sociology}
}

@article{lynchConvergenceLawRandom2005,
  ids = {lynchConvergenceLawRandoma},
  title = {Convergence {{Law}} for {{Random Graphs With Specified Degree Sequence}}},
  author = {Lynch, J. F.},
  year = {2005},
  journal = {ACM Transactions on Computational Logic},
  volume = {6},
  number = {4}
}

@article{lynchsparse1992,
  title = {Probabilities of {{Sentences}} about {{Very Sparse Random Graphs}}},
  author = {Lynch, J. F.},
  year = {1992},
  journal = {Random Structures \& Algorithms},
  volume = {3},
  number = {1},
  doi = {10.1002/rsa.3240030105},
  keywords = {asymptotic probability,first-order logic,random graphs},
  annotation = {\_eprint: https://onlinelibrary.wiley.com/doi/pdf/10.1002/rsa.3240030105}
}

@article{mckay2011subgraphs,
  title={Subgraphs of dense random graphs with specified degrees},
  author={McKay, B. D.},
  journal={Combinatorics, Probability and Computing},
  volume={20},
  number={3},
  pages={413--433},
  year={2011},
  publisher={Cambridge University Press}
}

@article{molloyCriticalPointRandom1995,
  title = {A Critical Point for Random Graphs with a given Degree Sequence},
  shorttitle = {A Critical Point for Random Graphs with a given Degree Sequence},
  author = {Molloy, M. and Reed, B.},
  year = {1995},
  month = mar,
  journal = {Random Structures \& Algorithms},
  volume = {6},
  number = {2-3},
  doi = {10.1002/rsa.3240060204}
}

@article{nymann2000paper,
  title = {On a Paper of {{Guthrie}} and {{Nymann}} on Subsums of Infinite Series},
  author = {Nymann, J. and S{\'a}enz, R.},
  year = {2000},
  journal = {Colloquium Mathematicae},
  volume = {83},
  number = {1}
}

@book{vanderhofstadRandomGraphsComplexVolII,
  title={Random Graphs and Complex Networks: Volume 2},
  author={{\swap{Hofstad}{van der}}, R.},
  isbn={9781316626320},
  series={Cambridge Series in Statistical and Probabilistic Mathematics},
  url={https://books.google.es/books?id=p7sn0AEACAAJ},
  year={2024},
  publisher={Cambridge University Press}
}

@book{vanderhofstadRandomGraphsComplexVolI, 
place={Cambridge}, 
series={Cambridge Series in Statistical and Probabilistic Mathematics}, 
title={Random Graphs and Complex Networks}, 
DOI={10.1017/9781316779422}, 
publisher={Cambridge University Press}, 
author={{\swap{Hofstad}{van der}}, R.}, 
year={2016}, 
collection={Cambridge Series in Statistical and Probabilistic Mathematics}
}

@article{wormald1981asymptotic,
  title={The asymptotic distribution of short cycles in random regular graphs},
  author={Wormald, N. C},
  journal={Journal of Combinatorial Theory, Series B},
  volume={31},
  number={2},
  pages={168--182},
  year={1981},
  publisher={Elsevier}
}

\appendix

\section{Proof of auxiliary lemmas}\label{app:1}

\begin{lemma}
\label{lem:aux_ineq}
    Let $\alpha_1, \alpha_2,\dots, \alpha_k, \beta_1,\beta_2,\dots, \beta_k$ be positive integers satisfying $\alpha_i\geq \beta_i$ for all $i\in [k]$. Define $\alpha= \sum_{i\in [k]} \alpha_i$, and $\beta= \sum_{i\in [k]} \beta_i$.
    Then 
\begin{equation}
\label{eq:inequality_aux}
\prod_{i\in [k]} 
\prod_{0\leq j < \beta_i} 
(\alpha_i - j)   \leq 
   \frac{(\alpha)_\beta}{\alpha^\beta}\Big( \prod_{ \beta - k + 1\leq j < \beta} \frac{\alpha}{\alpha-j}\Big) \prod_{i\in [k]} \alpha_i^{\beta_i}.
\end{equation}
\end{lemma}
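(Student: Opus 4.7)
The plan is to first rewrite the inequality in a more convenient equivalent form. Dividing both sides by $\prod_i \alpha_i^{\beta_i}$ and telescoping the correction product on the right, the target becomes
\[
\prod_{i \in [k]} \prod_{0 \leq j < \beta_i} \frac{\alpha_i}{\alpha_i - j} \;\geq\; \prod_{0 \leq j < \beta - k + 1} \frac{\alpha}{\alpha - j}.
\]
This form has no leftover correction factor and is better suited for induction on $\beta$, with $k$ and the values $\alpha_1, \ldots, \alpha_k$ held fixed throughout. The base case $\beta = k$ forces every $\beta_i = 1$, and both products collapse to $1$.

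For the inductive step, suppose $\beta > k$. The key observation is an averaging argument: since
\[
\sum_{i\in [k]} (\beta_i - 1) \;=\; \beta - k \;=\; \sum_{i\in [k]} \frac{(\beta - k)\alpha_i}{\alpha},
\]
at least one index $t \in [k]$ must satisfy $\beta_t - 1 \geq (\beta - k)\alpha_t/\alpha$. A short algebraic manipulation (cross-multiplying, using that all denominators in sight are positive since $\alpha_i \geq \beta_i$ and $\alpha \geq \beta > \beta - k$) recasts this as
\[
\frac{\alpha_t}{\alpha_t - \beta_t + 1} \;\geq\; \frac{\alpha}{\alpha - \beta + k}.
\]
In particular $\beta_t \geq 2$, so the induction hypothesis legitimately applies to the configuration obtained by replacing $\beta_t$ with $\beta_t - 1$ (total decreased to $\beta - 1 \geq k$).

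The step is then completed by multiplying the inductive inequality at total $\beta - 1$ by the displayed ratio inequality. On the left, the factor $\alpha_t/(\alpha_t - (\beta_t - 1))$ extends the inner $i = t$ product by precisely the missing $j = \beta_t - 1$ term, producing the full left-hand side at level $\beta$; on the right, the factor $\alpha/(\alpha - (\beta - k))$ extends the product from the range $0 \leq j < \beta - k$ to $0 \leq j < \beta - k + 1$, producing the full right-hand side at level $\beta$. The main (and essentially only) obstacle is spotting the averaging argument: recognizing that the natural threshold $(\beta - k)\alpha_t / \alpha$ for $\beta_t - 1$ rearranges exactly to the ratio bound needed so that the induction advances by one step in a way that matches both sides of the target inequality in lockstep. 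Everything else is routine index bookkeeping.
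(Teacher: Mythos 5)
Your proof is correct and follows essentially the same route as the paper's: you rewrite the inequality in the equivalent ratio form, induct on $\beta$ with $k$ and the $\alpha_i$ fixed, use the averaging argument to locate an index $t$ with $\beta_t-1\geq(\beta-k)\alpha_t/\alpha$, convert this to the ratio bound $\alpha_t/(\alpha_t-\beta_t+1)\geq\alpha/(\alpha-\beta+k)$, and multiply it against the inductive inequality at total $\beta-1$. The bookkeeping you supply (base case, positivity of denominators, $\beta_t\geq 2$, and the exact term-by-term match on both sides after multiplication) is all accurate and matches the paper's argument.
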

\begin{proof}
    The proof is by induction on $\beta$ for each $k$ and $\alpha_1,\dots, \alpha_k$ fixed. For $\beta=k$ the result is trivial. Suppose now that $\beta>k$. Then, for some $t\in [k]$ it must be that $\beta_t-1 \geq (\beta - k)\alpha_t/\alpha$. This is because
    \[
    \sum_{i\in [k]} \beta_i - 1 =
    \beta - k =  \sum_{i\in [k]}  (\beta - k)\alpha_i/\alpha.
    \]
    In particular, this means that
    \begin{equation}
\label{eq:inequality_aux_aux}
    \frac{\alpha_t}{\alpha_t - \beta_t + 1} \geq \frac{\alpha}{\alpha - \beta + k}.
    \end{equation}
    Observe that our assumption $\beta>k$ implies $\beta_t>1$. Additionally, by the induction hypothesis
    \[
   \prod_{i\in [k]} 
    \prod_{0\leq j < \beta^\prime_i} 
    \frac{\alpha_i}{\alpha_i - j}   \geq 
    \prod_{0\leq j < \beta^\prime - k + 1} \frac{\alpha}{\alpha-j},
    \]
    where $\beta^\prime_i = \beta_i$ for $i\neq t$, 
    $\beta^\prime_t = \beta_t - 1$, and $\beta^\prime = \beta - 1$. Multiplying by \eqref{eq:inequality_aux_aux} yields 

\begin{equation}
\label{eq:inequality_aux2}
\prod_{i\in [k]} 
\prod_{0\leq j < \beta_i} 
\frac{\alpha_i}{\alpha_i - j}   \geq 
\prod_{0\leq j < \beta - k + 1} \frac{\alpha}{\alpha-j}.
\end{equation}
Rearranging we obtain
 \begin{equation}
\label{eq:inequality_aux3}
\prod_{i\in [k]} 
\prod_{0\leq j < \beta_i} 
(\alpha_i - j)   \leq 
\prod_{0\leq j < \beta - k + 1} \frac{\alpha-j}{\alpha} \prod_{i\in [k]} \alpha_i^{\beta_i}.
\end{equation}   
Multiplying and dividing by $(\alpha)_\beta$ on the right hand side of the previous equation yields the desired result.
\end{proof}

\begin{lemma}
\label{lem:aux_ineq2}
    Let $\Delta=\Delta_N$ be a function on $N$ satisfying $\Delta_N=o(\sqrt{N})$.
    There is a sequence $\xi_N$ tending to $0$ as $N\to \infty$ such that:
\begin{itemize}
    \item[(i)] for all $0\leq a < \Delta$,
 \[
    \frac{N^a (N-a)!}{N!} \leq e^{a \xi_N }.
    \]
    \item[(ii)] for all $\Delta \leq a < N$,
    \[ 
    \frac{N^{\Delta}(N-a)! }{(N + \Delta - a)!} \leq e^{a \xi_N }
. \] 
\end{itemize}
\end{lemma}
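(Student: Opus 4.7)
The plan is to handle (i) directly by telescoping, and (ii) via Stirling's approximation combined with a two-case analysis depending on the size of $a$ relative to $\Delta\sqrt{N}$.

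For (i), telescope and use $1+x\le e^x$:
\[
\frac{N^{a}(N-a)!}{N!} = \prod_{j=0}^{a-1}\frac{N}{N-j} \le \left(\frac{N}{N-a}\right)^{a} = \left(1+\frac{a}{N-a}\right)^{a} \le \exp\!\left(\frac{a^{2}}{N-a}\right) \le \exp\!\left(\frac{a\Delta}{N-\Delta}\right),
\]
where the last step uses $a<\Delta<N$. This yields a first contribution to $\xi_N$ of order $\Delta/(N-\Delta)$, which is $o(1)$ since $\Delta=o(\sqrt{N})$.

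For (ii), set $b:=N-a\in\{1,\dots,N-\Delta\}$. Applying Stirling's bounds $\sqrt{2\pi m}(m/e)^{m}e^{1/(12m+1)}\le m!\le\sqrt{2\pi m}(m/e)^{m}e^{1/(12m)}$ to both $b!$ and $(b+\Delta)!$, one bounds $N^{\Delta}b!/(b+\Delta)!$ by $C\cdot D$, where
\[
C := \left(\frac{N}{b+\Delta}\right)^{\Delta}\left(\frac{b}{b+\Delta}\right)^{b} e^{\Delta}, \qquad D := \sqrt{\tfrac{b}{b+\Delta}}\cdot e^{O(1)}.
\]
The factor $D$ contributes $O(1)$, so $\ln D/a = o(1)$ uniformly, and it remains to control $\ln C/a$. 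In the regime $\Delta\le a\le\Delta\sqrt{N}$, I would write $N/(b+\Delta)=1+(a-\Delta)/(b+\Delta)$ and $b/(b+\Delta)=1-\Delta/(b+\Delta)$, apply $1+x\le e^{x}$ and $1-x\le e^{-x}$ in the two factors, and simplify to obtain $C\le \exp\bigl(\Delta a/(N-a+\Delta)\bigr)$. Hence $\ln C/a\le \Delta/(N-\Delta\sqrt{N})\to 0$.

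The delicate case is $\Delta\sqrt{N}<a<N$, where $N/(b+\Delta)$ can be large. The key observation is the algebraic identity $N\Delta\le a(N-a+\Delta)$, equivalent to $(a-\Delta)(N-a)\ge 0$, which yields $N/(b+\Delta)\le a/\Delta$. Combined with $(b/(b+\Delta))^{b}\le e^{-b\Delta/(b+\Delta)}$, this gives $C\le (a/\Delta)^{\Delta}\exp\bigl(\Delta^{2}/(b+\Delta)\bigr)$, so
\[
\frac{\ln C}{a}\le \frac{\Delta\ln(a/\Delta)}{a}+\frac{\Delta^{2}}{a(b+\Delta)}\le \frac{\ln N}{\sqrt{N}}+\frac{1}{\sqrt{N}}=o(1),
\]
using $a>\Delta\sqrt{N}$ and $b+\Delta\ge\Delta$. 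Taking $\xi_N$ to be the maximum of the $o(1)$ bounds obtained in (i) and in the two subcases of (ii)—each depending only on $N$ and $\Delta$—completes the proof. The main obstacle is precisely the regime $a$ close to $N$: bounding the two factors of $C$ naively by $1+x\le e^{x}$ leads to a quantity that blows up, and only the identity $N/(b+\Delta)\le a/\Delta$, extracted from $(a-\Delta)(N-a)\ge 0$, allows one to trade a power of $N$ for a power of $a$ and thereby absorb it into the prefactor $a$ of $\xi_N$.
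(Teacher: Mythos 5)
Your proof is correct, and it follows the same overall strategy as the paper: handle (i) by telescoping and $1+x\le e^x$, then handle (ii) by Stirling's approximation, splitting the factorial ratio into a main factor $C$ and a harmless factor $D$, and controlling $\ln C/a$ by a two-case analysis. The one genuine difference is the choice of case cutoff. You split at $a=\Delta\sqrt N$ and, in the upper regime $a>\Delta\sqrt N$, rely on the algebraic identity $N/(N+\Delta-a)\le a/\Delta$ (equivalent to $(a-\Delta)(N-a)\ge 0$) to tame the first factor of $C$, after which $\ln C/a$ is controlled via $\ln(z)/z$ being small for $z=a/\Delta\ge\sqrt N$. The paper instead splits at $N-a=N^{2/3}$: when $N-a\le N^{2/3}$, it crudely bounds $C\le e^{\Delta}(N/\Delta)^{\Delta}\le e^{N^{2/3}}$ and uses $a\sim N$; when $N-a\ge N^{2/3}$, it uses the same $1+x\le e^x$ bound as in your lower regime. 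The paper's split makes the near-$N$ case shorter at the cost of a coarser bound, whereas your split requires the extra identity but gives a tighter estimate there; either way the contributions are $o(1)$ uniformly in $a$, which is all that is needed. (For what it's worth, your case split is essentially identical to an earlier draft of this proof that survives, commented out, in the paper's source.)
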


\begin{proof}
    We begin with the proof of (ii), so suppose that  $\Delta \leq a < N$.
    By Stirling's approximation, we know that for all $k>0$ \[
    \sqrt{2\pi k}  \left(\frac{k}{e}\right)^k e^{\frac{1}{12k+1}} \leq
    k! \leq  \sqrt{2\pi k} \left(\frac{k}{e}\right)^k e^{\frac{1}{12k}}.\]
    Hence
    \[
    \frac{N^{\Delta}(N-a)! }{(N + \Delta - a)!}
    \leq 
    C D,
    \]
    where
    \begin{eqnarray} 
    C&=& 
    \left(1 + \frac{a- \Delta}{N + \Delta - a}\right)^{\Delta}
    \left(1 - \frac{\Delta}{N + \Delta - a}\right)^{N-a}
    e^{\Delta},\\
    D &=& \sqrt{\frac{N-a}{N + \Delta - a}} 
    e^{\frac{1}{12(N-a)} - \frac{1}{12(N-a + \Delta) + 1}}.
    \end{eqnarray}
    Clearly $D\leq e^{1/12}$ for all $\Delta \leq a < N$, so 
    $\ln(D)/a$ tends to zero uniformly with $N$.
    Now we need to prove the same for $\ln(C)/a$.
    We consider two cases. 
    First, suppose that $N-a\leq N^{2/3}$. Since $\Delta=o(\sqrt{N})$, we have for $N$ large enough
    \[
    C\leq e^{\Delta}(N/\Delta)^{\Delta} \leq e^{N^{2/3}}
    \]
    Since $N\sim a$, we have that $\ln(C)/a$ tends to zero.

    Otherwise, suppose that $N-a\geq N^{2/3}$. 
    Using the inequality $1+x\leq e^x$ for all $x\in \mathbb{R}$, we get that
    \[
    C \leq 
    \exp\left[ 
    \Delta\frac{
    (a - \Delta) - (N + a) + ( N - a + \Delta) }{
    N - a + \Delta}
    \right] 
    = \exp\left[ 
    \frac{ \Delta a}{
    N - a + \Delta}
    \right].
    \]
    Thus, all $\Delta \leq a \leq \Delta \sqrt{N}$, 
    \[
    \frac{\ln(C)}{a} = 
    \frac{ \Delta }{
    N - a + \Delta}
    \leq 
    \frac{ \Delta }{
    N^{2/3}},         
    \]
    which tends to zero because
    $\Delta= o(\sqrt{N})$. \par
    
    Now let us show (i), so suppose that $0\leq a < \Delta$.
    It holds that
    \[
    \frac{N^a (N-a)!}{N!} \leq \left(\frac{N}{N-a}\right)^a 
    \leq e^{a^2/(N-a)}\leq e^{a \frac{\Delta}{N-\Delta}},
    \]
    where we have used that $a<\Delta < N$ in the last inequality.
    The function $\frac{\Delta}{N-\Delta}$ tends to zero with $N$ and depends only on
    $N$ and $\Delta$, as we wanted. 
    
\end{proof}

\comment{
\begin{proof}
    We begin with the proof of (ii), so suppose that  $\Delta \leq a < \widehat{n}$.
    By Stirling's approximation, we know that for all $k>0$ \[
    \sqrt{2\pi k}  \left(\frac{k}{e}\right)^k e^{\frac{1}{12k+1}} \leq
    k! \leq  \sqrt{2\pi k} \left(\frac{k}{e}\right)^k e^{\frac{1}{12k}} .\]
    Hence
    \[
    \frac{\widehat{n}^{\Delta}(\widehat{n}-a)! }{(\widehat{n} + \Delta - a)!}
    \leq 
    C D,
    \]
    where
    \[
    C= 
    \left(\frac{\widehat{n}}{\widehat{n} + \Delta - a}\right)^{\Delta}
    \left(\frac{\widehat{n}-a}{\widehat{n} + \Delta - a}\right)^{\widehat{n}-a}
    e^{\Delta}
    \]
    \[
    D = \sqrt{\frac{\widehat{n}-a}{\widehat{n} + \Delta - a}} 
    e^{\frac{1}{12(\widehat{n} - a)} - \frac{1}{12(\widehat{n} - a + \Delta) + 1}}.
    \]
    Clearly $D\leq e^{1/12}$ for all $\Delta \leq a < \widehat{n}$, so 
    $\ln(D)/a$ tends to zero uniformly with $n$.
    Now we need to prove the same for $\ln(C)/a$.
    We consider two cases. First, suppose that $\Delta \leq a \leq \Delta \sqrt{\widehat{n}}$. 
    It holds that 
    \[
    C = 
    \left(1 + \frac{a- \Delta}{\widehat{n} + \Delta - a}\right)^{\Delta}
    \left(1 - \frac{\Delta}{\widehat{n} + \Delta - a}\right)^{\widehat{n}-a}
    e^{\Delta}.
    \]
    Using the inequality $1+x\leq e^x$ for all $x\in \mathbb{R}$, we get that
    \[
    C \leq 
    \exp\left[ 
    \Delta\frac{
    (a - \Delta) - (\widehat{n} + a) + ( \widehat{n} - a + \Delta) }{
    \widehat{n} - a + \Delta}
    \right] 
    = \exp\left[ 
    \frac{ \Delta a}{
    \widehat{n} - a + \Delta}
    \right].
    \]
    Thus, all $\Delta \leq a \leq \Delta \sqrt{\widehat{n}}$, 
    \[
    \ln(C)/a = 
    \frac{ \Delta }{
    \widehat{n} - a + \Delta}
    \leq 
    \frac{ \Delta }{
    \widehat{n} - \Delta\sqrt{n} + \Delta},         
    \]
    which tends to zero because
    $\Delta= o(\sqrt{\widehat{n}})$. \par
    Now suppose that $\Delta \sqrt{\widehat{n}} < a < \widehat{n}$. It holds that
    $\widehat{n}/(\widehat{n} +\Delta -a) \leq a/\Delta$. Indeed,
    \[
    \Delta \widehat{n} - (\widehat{n} + \Delta - a) a = 
    (\Delta - a)(\widehat{n} - a) \leq 0. 
    \]
    using this inequality on the first factor of $C$, and the
    inequality $(1-s)^t\leq e^{st}$ on the second factor, we get that
    \[
    C \leq  \left( \frac{a}{\Delta} \right)^{\Delta}
    \exp\left[\frac{\Delta^2}{\widehat{n} + \Delta - a}\right].
    \]
    This way, 
    \[
    \ln(C)/a \leq \left(
    \frac{\ln(a/\Delta)}{a/\Delta}
    \right) \left( \frac{\Delta^2}{a(\widehat{n}+\Delta - a)} \right).
    \]
    Observe that $\ln(z)/z$ is decreasing in $z$ when $z>e$.
    Additionally, $\widehat{n} + \Delta - a > \Delta$. Thus, 
    for sufficiently large $n$ and all $\Delta \sqrt{\widehat{n}} < a < \widehat{n}$
    \[
    \ln(C)/a \leq \left(
    \frac{\ln(\sqrt{n})}{\sqrt{n}}
    \right) \left( \frac{1}{\sqrt{n}} \right),
    \]
    which tends to zero, as we wanted to show. \par
    Now let us show the second part of the statement. 
    Suppose that $0\leq a < \Delta$.
    It holds that
    \[
    \frac{\widehat{n}^a (\widehat{n}-a)!}{\widehat{n}!} \leq \left(\frac{\widehat{n}}{\widehat{n}-a}\right)^a 
    \leq e^{a^2/(\widehat{n}-a)}\leq e^{a \frac{\Delta}{\widehat{n}-\Delta}},
    \]
    where we have used that $a<\Delta < \widehat{n}$ in the last inequality.
    The function $\frac{\Delta}{\widehat{n}-\Delta}$ tends to zero with $n$ and depends only on
    $n$ and $\Delta$, as we wanted. 
    
\end{proof}
}

\end{document}